\newcommand{\norm}[1]{\left\lVert #1 \right\rVert}
\newtheorem{theorem}{Theorem}[section]
\newtheorem{corollary}[theorem]{Corollary}
\newtheorem{lemma}[theorem]{Lemma}
\newtheorem{proposition}[theorem]{Proposition}
\theoremstyle{definition}
\theoremstyle{remark}
\newtheorem{condition}[theorem]{Condition}
\numberwithin{equation}{section}
\title{Stochastic Gradient Descent in Continuous Time: A Central Limit Theorem}
\author{Justin Sirignano\footnote{Department of Industrial \& Systems Engineering, University of Illinois at Urbana-Champaign, Urbana, E-mail: jasirign@illinois.edu} \phantom{.}  and Konstantinos Spiliopoulos\footnote{Department of Mathematics and Statistics, Boston University, Boston, E-mail: kspiliop@math.bu.edu} \thanks{Research of K.S. supported in part by the National Science Foundation (DMS 1550918)} \thanks{The authors thank seminar participants at Princeton University and the University of Colorado Boulder for their comments.}  
}
\date{\today}
\begin{document}


\maketitle
\begin{abstract}
Stochastic gradient descent in continuous time (SGDCT) provides a computationally efficient method for the statistical learning of continuous-time models, which are widely used in science, engineering, and finance. The SGDCT algorithm follows a (noisy) descent direction along a continuous stream of data. The parameter updates occur in continuous time and satisfy a stochastic differential equation. This paper analyzes the asymptotic convergence rate of the SGDCT algorithm by proving a central limit theorem (CLT) for strongly convex objective functions and, under slightly stronger conditions, for non-convex objective functions as well. An L$^p$ convergence rate is also proven for the algorithm in the strongly convex case. The mathematical analysis lies at the intersection of stochastic analysis and statistical learning. 
\end{abstract}

\section{Introduction}

``Stochastic gradient descent in continuous time" (SGDCT) is a statistical learning algorithm for continuous-time models, which are common in science, engineering, and finance.  
Given a continuous stream of data, stochastic gradient descent in continuous time (SGDCT) can estimate unknown parameters or functions in stochastic differential equation (SDE) models. \cite{SGDCT1} analyzes the numerical performance of SGDCT for a number of applications in finance and engineering. We prove a central limit theorem and $L^p$ convergence rate for the SGDCT algorithm; see Section \ref{ContributionsIntro} for an overview of our results. Several technical challenges arise in the analysis, and our approaches may be more broadly useful for studying other continuous-time statistical learning methods (e.g., \cite{Raginsky}, \cite{Surace}, and \cite{Doya}).

Batch optimization for the statistical estimation of continuous-time models may be impractical for large datasets where observations occur over a long period of time.  Batch optimization takes a sequence of descent steps for the model error for the entire observed data path.  Since each descent step is for the model error for the \emph{entire observed data path}, batch optimization is slow (sometimes impractically slow) for long periods of time or models which are computationally costly to evaluate (e.g., partial differential equations or large systems of differential equations).

SGDCT provides a computationally efficient method for statistical learning over long time periods and for complex models.  SGDCT \emph{continuously} follows a (noisy) descent direction \emph{along the path of the observation}; this results in much more rapid convergence.  Parameters are updated online in continuous time, with the parameter updates $\theta_t$ satisfying a stochastic differential equation.

Consider a diffusion $X_t \in \mathcal{X}=\mathbb{R}^{m}$:
\begin{eqnarray}
d X_t = f^{\ast}(X_t) dt + \sigma d W_t.
\label{ClassofEqns}
\end{eqnarray}
The function $f^{\ast}(x)$ is unknown and $\sigma$ is a constant matrix. The goal is to statistically estimate a model $f(x, \theta)$ for $f^{\ast}(x)$ from the continuous stream of data $(X_t)_{t \geq 0}$. $W_t \in \mathbb{R}^m$ is a standard Brownian motion and we assume $\sigma$ is known.  The diffusion term $W_t$ represents any random behavior of the system or environment.  The functions $f(x, \theta)$ and $f^{\ast}(x)$ may be non-convex.

The stochastic gradient descent update in continuous time for the parameters $\theta \in \mathbb{R}^k$ satisfies the SDE:
\begin{eqnarray}
d \theta_t =  \alpha_{t}\big [  \nabla_{\theta}  f(X_t, \theta_t)(\sigma \sigma^{\top})^{-1} d X_t -    \nabla_{\theta}    f(X_t, \theta_t)(\sigma \sigma^{\top})^{-1} f(X_t, \theta_t)  dt\big],
\label{SDEMain}
\end{eqnarray}
where $\nabla_{\theta}  f(X_t; \theta_t)$ is matrix-valued and $\alpha_{t}$ is the learning rate. For example, $\alpha_t$ could equal $\frac{C_{\alpha}}{C_0 + t}$. \textcolor{black}{We assume that $\theta_0$ is initialized according to some distribution with compact support.}  The parameter update (\ref{SDEMain}) can be used for both statistical estimation given previously observed data as well as online learning (i.e., statistical estimation in real-time as data becomes available).  

Define the function
\[
g(x,\theta)=\frac{1}{2}\left\|f(x,\theta)-f^{\ast}(x)\right\|^{2}_{\sigma\sigma^{\top}}=\frac{1}{2}\left<f(x,\theta)-f^{\ast}(x),\left(\sigma\sigma^{\top}\right)^{-1}(f(x,\theta)-f^{\ast}(x))\right>,
\]
which measures the distance between the model $f(x, \theta)$ and the true dynamics $f^{\ast}(x)$ for a specific $x$.

We assume that $X_{t}$ is sufficiently ergodic (to be concretely specified later in the paper) and that it has some well-behaved $\pi(dx)$ as its unique invariant measure. As a general notation, if $h(x,\theta)$ is a generic $L^{1}(\pi)$ function, then we define its average over $\pi(dx)$ to be
\[
\bar{h}(\theta)=\int_{\mathcal{X}} h(x,\theta)\pi(dx).
\]
In particular, $\bar g ( \theta) = \int_{\mathcal{X}} g(x,\theta)\pi(dx)$ is a natural objective function to consider for our analysis of the asymptotic behavior of the algorithm $\theta_t$. $\bar g (\theta)$ is the weighted average of the distance between $f(x, \theta)$ and $f^{\ast}(x)$. The weights are given by $\pi(dx)$, which is the distribution that $X_t$ tends to as $t$ becomes large.

The distance $g(x, \theta)$ is decreased by moving $\theta$ in the descent direction $-\nabla_{\theta} g(x, \theta)$, which motivates the algorithm
\begin{eqnarray}
d \theta_t &=& - \alpha_t \nabla_{\theta} g(X_t, \theta_t) dt = \alpha_t  \nabla_{\theta} f(X_t, \theta_t ) (\sigma \sigma^{\top})^{-1}   \big{(}  f^{\ast}(X_t) - f(X_t, \theta_t )  \big{)} dt.
\label{Heuristic1}
\end{eqnarray}

$f^{\ast}(x)$ is unknown so (\ref{Heuristic1}) cannot be implemented in practice. However, $d X_t = f^{\ast}(X_t) dt + \sigma d W_t$ is a noisy estimate of $f^{\ast}(X_t) dt$, which can be used to derive the SGDCT algorithm (\ref{SDEMain}). In particular, it is easy to see that the SGDCT algorithm (\ref{SDEMain}) is the descent direction (\ref{Heuristic1}) plus a noise term:
\begin{eqnarray}
d \theta_t &=&  \alpha_{t}\big [  \nabla_{\theta}  f(X_t, \theta_t)(\sigma \sigma^{\top})^{-1} d X_t -    \nabla_{\theta}    f(X_t, \theta_t)(\sigma \sigma^{\top})^{-1} f(X_t, \theta_t)  dt\big] \notag \\
&=& \alpha_t \nabla_{\theta} f(X_t, \theta_t ) (\sigma \sigma^{\top})^{-1}   \big{(}  f^{\ast}(X_t) - f(X_t, \theta_t )  \big{)} dt +  \alpha_{t}  \nabla_{\theta}  f(X_t, \theta_t)(\sigma \sigma^{\top})^{-1} \sigma d W_t \notag \\
&=& - \alpha_t \nabla_{\theta} g (X_t, \theta_t) dt +  \alpha_{t}  \nabla_{\theta}  f(X_t, \theta_t)(\sigma \sigma^{\top})^{-1} \sigma d W_t.
\label{DeComposition1}
\end{eqnarray}

The descent direction $- \alpha_t \nabla_{\theta} f (X_t, \theta_t)$ in equation (\ref{DeComposition1}) depends upon $X_t$, so it is unclear in the above formulation if $\theta_t$ makes progress towards a fixed point. In order to understand the behavior of $\theta_t$, it is helpful to decompose (\ref{DeComposition1}) into several terms:
\begin{eqnarray}
d \theta_t = - \underbrace{ \alpha_t \nabla_{\theta} \bar g (\theta_t) dt}_{\textrm{Descent term}} -  \underbrace{ \alpha_t \big{(} \nabla_{\theta} g(X_t, \theta_t)   - \nabla_{\theta} \bar g ( \theta_t) \big{)}  dt }_{ \textrm{Fluctuation term} } + \underbrace{ \alpha_{t}  \nabla_{\theta}  f(X_t, \theta_t)(\sigma \sigma^{\top})^{-1} \sigma d W_t}_{\textrm{Noise term} }.
\label{DeComposition2}
\end{eqnarray}

Heuristically, if $\alpha_t$ decays with time (e.g., $\alpha_{t}=\frac{C_{\alpha}}{C_0 + t}$), the descent term $-\alpha_t \nabla_{\theta} \bar g (\theta_t)$ will dominate the fluctuation and noise terms for large $t$. Then, one might expect that $\theta_t$ will converge to a local minimum of $\bar g (\theta)$. The authors proved in \cite{SGDCT1} that $\theta_t$ converges to a critical point of the objective function $\bar g (\theta)$:
\begin{eqnarray}
 \norm{ \nabla \bar g(\theta_t) } \overset{a.s.} \rightarrow 0 \phantom{....} \textrm{as} \phantom{....} t \rightarrow \infty.
 \end{eqnarray}

However, \cite{SGDCT1} left as an open question whether $\theta_t$ satisfies any asymptotic convergence rate. In this paper, we prove a central limit theorem and an $L^p$ convergence rate, which we present in the next subsection of this Introduction.



 \subsection{Contributions of this paper} \label{ContributionsIntro}
 We prove a central limit theorem for $\theta_t$ when $\bar g(\theta)$ has a single critical point $\theta^{\ast}$:
 \begin{eqnarray}
 \sqrt{t} \big{(} \theta_t - \theta^{\ast} \big{)} \overset{d} \rightarrow \mathcal{N}(0, \bar \Sigma) \phantom{....} \textrm{as} \phantom{....} t \rightarrow \infty.
 \end{eqnarray}

This result is proven for objective functions $\bar g (\theta)$ which may be non-convex and models $f(x; \theta)$ with up to linear growth in $\theta$ and polynomial growth in $x$; see Theorem \ref{T:MainTheorem3}. Furthermore, when $\bar g ( \theta)$ is strongly convex we prove an $L^p$ convergence rate:
 \begin{eqnarray}
 \mathbb{E} \big{[} \norm{ \theta_t - \theta^{\ast} }^p \big{]} \leq \frac{K}{\left(C_{0}+t\right)^{p/2}},
 \end{eqnarray}
 for $p\geq 1$.  We prove this result for models $f(x, \theta)$ with up to quadratic growth in $\theta$ and polynomial growth in $x$. In addition, in this strongly convex case, we prove the central limit theorem also holds for models $f(x, \theta)$ with up to quadratic growth in $\theta$ and polynomial growth in $x$. The $L^p$ convergence rate and CLT for the strongly convex case are stated in Theorems \ref{T:MainTheorem1} and \ref{T:MainTheorem2}, respectively.

Theorems \ref{T:MainTheorem1} and \ref{T:MainTheorem2} do not make use of the results in \cite{SGDCT1}. As a part of the proof for Theorem \ref{T:MainTheorem3}, we also strengthen the convergence result of \cite{SGDCT1}, which did not allow $f(x, \theta)$ to grow in $\theta$; see Theorem \ref{T:AlternateMainTheoremSGD1}. Theorems  \ref{T:MainTheorem1},  \ref{T:MainTheorem2}, and \ref{T:MainTheorem3} are proven for the learning rate $\alpha_t = \frac{C_{\alpha}}{C_0 + t}$. Analogous results for Theorems \ref{T:MainTheorem1}, \ref{T:MainTheorem2}, and \ref{T:MainTheorem3} hold of course for a general class of learning rates $\alpha_{t}$; see Proposition \ref{C:GeneralLearningRate}. The precise statement of the mathematical results and the technical assumptions required are presented in Section \ref{MainResults}.

In addition, as a corollary, our results prove an $L^p$ convergence rate and a CLT for the case where there is no dependence on $X_t$. That is, if $\theta^{\ast}$ is the unique critical point of the function $g(\theta)$ and
\begin{eqnarray}
d \theta_t = \alpha_t  \big{(} -\nabla g(\theta_t) dt + d W_t \big{)},
\end{eqnarray}
then $\theta_t \overset{a.s.} \rightarrow \theta^{\ast}$ and $\sqrt{t} \big{(} \theta_t - \theta^{\ast} \big{)} \overset{d} \rightarrow \mathcal{N}(0, \bar \Sigma)$ as $t \rightarrow \infty$. In addition, if $g(\theta)$ is strongly convex, then $\mathbb{E} \big{[} \norm{ \theta_t - \theta^{\ast} }^p \big{]} \leq \frac{K}{\left(C_{0}+t\right)^{p/2}}$.

These mathematical results are important for two reasons.  First, they establish theoretical guarantees for the rate of convergence of the algorithm.  Secondly, they can be used to analyze the effects of different features such as the learning rate $\alpha_t$, the level of noise $\sigma$, and the shape of the objective function $\bar g(\theta)$.  We are able to precisely characterize the regime under which the optimal convergence rate exists as well as characterize the limiting covariance $\bar \Sigma$.  The regime depends entirely upon the choice of the learning rate.



%


Proving the central limit theorem is challenging due to the nature of the $X_t$ process. The data $X_t$ will be correlated over time, which differs from the standard discrete-time version of stochastic gradient descent where the data is usually considered to be  i.i.d. at every step. In particular, the fluctuation term $\alpha_t \big{(} \nabla_{\theta} g(X_t, \theta_t)   - \nabla_{\theta} \bar g ( \theta_t) \big{)}  dt$ must be analyzed and shown to become small in some appropriate sense. We evaluate, and control with rate $\alpha_t^2$, these fluctuations using a Poisson partial differential equation.

Although it can be heuristically seen that the descent term dominates the fluctuation and noise terms in equation (\ref{DeComposition2}) for large $t$, the descent term $\alpha_t \nabla_{\theta} \bar g (\theta_t) \rightarrow 0$ as $t \rightarrow \infty$ since $\displaystyle \lim_{t \rightarrow \infty} \alpha_t = 0$. Therefore, it is a priori unclear if asymptotically $\alpha_t \nabla_{\theta} \bar g (\theta_t)$ will remain sufficiently large to guarantee an asymptotic convergence rate for $\theta_t$.

\cite{SGDCT1} proved the convergence result $\norm{ \nabla_{\theta} \bar g( \theta_t) } \overset{a.s.} \rightarrow 0$ using a cycle of stopping times to capture when $\norm{ \nabla_{\theta} \bar g( \theta_t) }$ is large or small. However, this approach is not useful for proving the CLT. Instead, we derive a stochastic integral to represent $\sqrt{t} ( \theta_t - \theta^{\ast} )$ using Duhamel's principle and the fundamental solution of the random ODE $d \Psi_t = -  \alpha_t \nabla_{\theta} \bar g ( \tilde \theta_t)  \Psi_t dt$ where $\tilde \theta_t$ lies on a line connecting $\theta^{\ast}$ and $\theta_t$. The integrand of this stochastic integral includes the fluctuation and noise terms as well as $\Psi_t$. 

The proofs in this paper also require addressing several other challenges. The model $f(x, \theta)$ is allowed to grow with $\theta$.  This means that the fluctuations as well as other terms can grow with $\theta$.  Therefore, we must prove an a priori stability estimate for $\norm{ \theta_t } $.  Proving a central limit theorem for the non-convex $\bar g(\theta)$ in Theorem \ref{T:MainTheorem3} is not straightforward since the convergence speed of $\theta_t$ can become arbitrarily slow in certain regions, and the gradient can even point away from the global minimum $\theta^{\ast}$. To address this, we consider the stochastic integral after the time $\tau_{\delta}$, which is defined as \emph{the final time} $\theta_t$ enters a neighborhood of $\theta^{\ast}$. However,  $\tau_{\delta}$
is not a stopping time, and therefore careful analysis is required to study the limiting behavior of the stochastic integral.

The approaches we develop in this paper are likely to be of general use for studying the asymptotics of other continuous-time statistical learning methods (e.g., \cite{Raginsky}, \cite{Surace}, and \cite{Doya}).


\subsection{Literature Review} \label{LiteratureReview}
The vast majority of the statistical learning, machine learning, and stochastic gradient descent literature addresses discrete-time algorithms.  In contrast, this paper analyzes a statistical learning algorithm in continuous time. We review the existing literature that is most relevant to our work.  We also comment on the importance of developing and analyzing continuous-time algorithms for addressing continuous-time models.

Many discrete-time papers study algorithms for $\theta_n$ without the $X$-dynamics (for example, stochastic gradient descent with i.i.d. noise at each step).  The inclusion of the $X$-dynamics makes analysis significantly more challenging.  An  L$^2$ convergence rate and central limit theorem result for discrete-time stochastic gradient descent is presented in \cite{Benveniste}.  Our setup and assumptions are different from \cite{Benveniste}. Our proof approach is different than the one in \cite{Benveniste} and leverages the continuous-time nature of our setting, which is the formulation of interest in many engineering and financial problems (see \cite{SGDCT1}).

\cite{YinGupta} studied convergence of a continuous-time version of the stochastic gradient descent algorithm. They proved convergence for the process and a central limit theorem for the time-averaged process. There are a couple of differences between our paper and \cite{YinGupta}. Our paper includes the $X$-dynamics, which are not considered in \cite{YinGupta}. \cite{YinGupta} proves a central limit theorem for the time-averaged process, while we study the central limit theorem for the process itself. In addition, our assumptions are different.

\cite{Sakrison} studied convergence of a continuous-time version of the stochastic gradient descent algorithm. However, \cite{Sakrison} did not include the $X$-dynamics. \cite{Sakrison} only proves convergence while our paper proves a central limit theorem and a convergence rate.

%

\cite{Raginsky} studies continuous-time stochastic mirror descent in a setting different than ours.  In the framework of \cite{Raginsky}, the objective function is known.  In this paper, we consider the statistical estimation of the unknown dynamics of a random process (i.e. the $X$ process satisfying (\ref{ClassofEqns})).

Statisticians and financial engineers have actively studied parameter estimation of SDEs, although typically not with statistical learning or machine learning approaches.  The likelihood function will usually be calculated from the \emph{entire observed path of $X$} (i.e., batch optimization) and then maximized to find the maximum likelihood estimator (MLE).  For example, \cite{Basawa} derives the likelihood function for the entire path of a continuously observed $X$. Unlike in this paper, the actual optimization procedure to maximize the likelihood function is often not analyzed. Readers are referred to \cite{bishwal2008parameter,Kutoyants,rao1999statistical} for thorough reviews of classical statistical inference methods for stochastic differential equations.


Continuous-time models are common in engineering and finance.  There are often coefficients or functions in these models which are uncertain or unknown; stochastic gradient descent can be used to learn these model parameters from data.

It is natural to ask  why use SGDCT versus a straightforward approach which (1) discretizes the continuous-time dynamics and then (2) applies traditional stochastic gradient descent.  We elaborated in detail in regards to this issue in \cite{SGDCT1}, where specific examples are provided there to showcase the differences. For completeness, let us briefly discuss the issues that arise.

SGDCT allows for the application of numerical schemes of choice to the theoretically correct statistical learning equation for continuous-time models.   This can lead to more accurate and more computationally efficient parameter updates.  Numerical schemes are always applied to continuous-time dynamics and different numerical schemes may have different properties for different continuous-time models.  A priori performing a discretization to the system dynamics and then applying a traditional discrete-time stochastic gradient descent scheme can result in a loss of accuracy, or may not even converge, see \cite{SGDCT1}.  For example, there is no guarantee that (1) using a higher-order accurate scheme to discretize the system dynamics and then (2) applying traditional stochastic gradient descent will produce a statistical learning scheme which is higher-order accurate in time.  Hence, it makes sense  to first develop the continuous-time statistical learning equation, and then apply the higher-order accurate numerical scheme.

In addition to model estimation, SGDCT can be used to solve continuous-time optimization problems, such as American options. In \cite{SGDCT1}, SGDCT was combined with a deep neural network to solve American options in up to $100$ dimensions.  Alternatively, one could  discretize the dynamics and then use the Q-learning algorithm (traditional stochastic gradient descent applied to an approximation of the discrete HJB equation).  However, as we showed in \cite{SGDCT1}, Q-learning is biased while SGDCT is unbiased.  Furthermore, in SDE models with Brownian motions, the Q-learning algorithm can blow up as the time step size $\Delta$ becomes small; see \cite{SGDCT1} for details.


\subsection{Organization of Paper}
In Section \ref{MainResults} we state our assumptions and the main results of this paper. The proof of the L$^p$ convergence rate for $p\geq 1$ is in Section \ref{L2proof}.  The proof of the CLT in the strongly convex case is in Section \ref{CLTproof}.  Section \ref{QuasiConvexproof} proves the central limit theorem for a class of non-convex models.  In Section \ref{ConvergenceAnalysis}, the convergence rate results are used to analyze the behavior and dynamics of the stochastic gradient descent in continuous-time algorithm.  Some technical results required for the proofs are presented in Appendix \ref{Preliminary}. Appendix \ref{ConvergenceProofQuadraticGrowth} contains the proof of Theorem \ref{T:AlternateMainTheoremSGD1}, which strengthens the convergence result of \cite{SGDCT1}.  In particular, Appendix \ref{ConvergenceProofQuadraticGrowth} provides the necessary adjustments to the proofs of \cite{SGDCT1} in order to guarantee convergence in the case where the model $f(x,\theta)$ is allowed to grow with respect to $\theta$.

\section{Main Results} \label{MainResults}
We prove three main results.  Theorem \ref{T:MainTheorem1} is an L$^p$ convergence for the strongly convex case.  Theorem \ref{T:MainTheorem2} is a central limit theorem for the strongly convex case.  Theorem \ref{T:MainTheorem3} is a central limit theorem for the non-convex case with a single critical point.

We say that ``the function $h(\theta)$ is strongly convex with constant $C$" if there exists a $C > 0$ such that $z^{\top} \Delta h(\theta) z \geq   C z^{\top} z$ for any non-zero $z \in \mathbb{R}^k$.  $\Delta h$ is the Hessian matrix of the function $h(\theta)$. Conditions \ref{A:Assumption1} and \ref{A:Assumption2} require that $C C_{\alpha} > 1$ where $C_{\alpha}$ is the magnitude of the learning rate and $C$ is the strong convexity constant for the objective function $\bar{g}(\theta^{*})$ at point $\theta=\theta^{*}$.  This is an important conclusion of the convergence analysis in this paper: the learning rate needs to be sufficiently large in order to achieve the optimal rate of convergence. This dependence of the convergence rate on the learning rate is not specific to the algorithm SGDCT in this paper, but applies to other algorithms (even deterministic gradient descent algorithms).  We discuss this in more detail in Section \ref{ConvergenceAnalysis}.


It should be emphasized that the assumptions for  Theorem \ref{T:MainTheorem1} and Theorem \ref{T:MainTheorem2} (the strongly convex case) allow for the model $f(x, \theta)$ to grow up to quadratically in $\theta$. On the other hand, Theorem \ref{T:MainTheorem3} (the non-convex case) is proven under the assumption that $f(x, \theta)$   grows up to linearly in $\theta$. The growth of the model $f(x, \theta)$ is allowed to be polynomial in $x$ for Theorems  \ref{T:MainTheorem1}, \ref{T:MainTheorem2}, and  \ref{T:MainTheorem3}.

The proofs of Theorems \ref{T:MainTheorem1}, \ref{T:MainTheorem2}, and \ref{T:MainTheorem3} are in Sections \ref{L2proof}, \ref{CLTproof}, and \ref{QuasiConvexproof}, respectively. Some technical results required for these proofs are presented in Appendix \ref{Preliminary}. Appendix \ref{ConvergenceProofQuadraticGrowth} contains the changes necessary to generalize the proof of convergence of \cite{SGDCT1} from the case where $\bar g (\theta)$ is bounded to the case where $\bar{g}(\theta)$ can grow up to quadratically in $\theta$; see Theorem \ref{T:AlternateMainTheoremSGD1} for the corresponding rigorous statement.

Let us now list our conditions.  Condition \ref{A:LyapunovCondition} guarantees uniqueness and existence of an invariant measure for the $X$ process.   Condition \ref{A:LyapunovCondition} and the second part of Condition \ref{A:Assumption0} guarantee that equation (\ref{ClassofEqns}) is well-posed. We will use the notation $\norm{ \cdot }$ for the Euclidean norms of both vectors and matrices. $\norm{\cdot}$ will denote the $\ell^2$-norm and $\norm{\cdot}_p$ will denote the $\ell^p$-norm.

\begin{condition}\label{A:LyapunovCondition}
We assume that $\sigma\sigma^{\top}$ is non-degenerate constant diffusion matrix and $\lim_{|x|\rightarrow\infty}f^{\ast}(x)\cdot x=-\infty$.
\end{condition}

In regards to regularity of the involved functions, we impose the following Condition \ref{A:Assumption0}.
\begin{condition}
\label{A:Assumption0}
\begin{enumerate}
\item  We assume that $\nabla_{\theta}g(x,\cdot)\in \mathcal{C}^{2}(\mathbb{R}^{k})$ for all $x\in\mathcal{X} \textcolor{black}{\subseteq \mathbb{R}^m}$, $\frac{\partial^{2}\nabla_{\theta}g}{\partial x^{2}}\in \mathcal{C}\left(\mathcal{X},\mathbb{R}^{k}\right)$, $\nabla_{\theta}g(\cdot,\theta)\in \mathcal{C}^{\alpha}\left(\mathcal{X}\right)$ uniformly in $\theta\in\mathbb{R}^{k}$ for some $\alpha\in(0,1)$.
\item The function $f^{\ast}(x)$ is  $\mathcal{C}^{2+\alpha}(\mathcal{X})$ with $\alpha\in(0,1)$. Namely, it has two derivatives in $x$, with all partial derivatives being H\"{o}lder continuous, with exponent $\alpha$, with respect to $x$.
\item For every $t\geq 0$, equation (\ref{SDEMain})  has a unique strong solution.
\end{enumerate}
\end{condition}

Conditions \ref{A:RecurrenceCondition0} and \ref{A:GrowthConditions0} are imposed in order to guarantee that $\theta_{t}$ has bounded moments uniformly in time $t$.
\begin{condition}\label{A:RecurrenceCondition0}
There exists a constant $R<\infty$ and an almost everywhere positive function $\kappa(x)$ such that
\begin{align}
\left<-\nabla_{\theta}g(x,\theta),\theta/\norm{\theta}\right>&\leq - \kappa(x) \norm{\theta}, \textrm{ for }\norm{\theta}\geq R.\nonumber
\end{align}
\end{condition}

\begin{condition}\label{A:GrowthConditions0}
Consider the function
\begin{align}
\tau(x,\theta)&=\left<\nabla_{\theta}f(x,\theta)\nabla_{\theta}f^{\top}(x,\theta)\frac{\theta}{\norm{\theta}},\frac{\theta}{\norm{\theta}}\right>^{1/2}.
\end{align}
Then, there exists a function $\lambda(x)$, growing not faster than polynomially in $\norm{x}$, such that for any $\theta_{1},\theta_{2}\in\mathbb{R}^{k}$ and $x\in\mathbb{R}^{m}$
\begin{align}
|\tau(x,\theta_{1})-\tau(x,\theta_{2})|&\leq |\lambda(x)|\rho\left(\norm{\theta_{1}-\theta_{2}}\right)
\end{align}
where $\rho(\xi)$ is an increasing function on $[0,\infty)$ with $\rho(0)=0$ and $\int_{\xi>0}\rho^{-2}(\xi)d\xi=\infty$.
\end{condition}


\textcolor{black}{Notice that Condition \ref{A:GrowthConditions0} simplifies considerably when $\theta$ is one-dimensional. Condition \ref{A:GrowthConditions0} is always true in the case where $\nabla_{\theta}f(x,\theta)$ is independent of $\theta$ (which happens for example when $f(x,\theta)$ is affine in $\theta$). Furthermore, if $f(x, \theta)$ and $\nabla_{\theta} f(x, \theta)$ are uniformly bounded in $\theta$ and polynomially bounded in $x$, Conditions \ref{A:RecurrenceCondition0} and \ref{A:GrowthConditions0} are not required for the CLT in the non-convex case (Theorem \ref{T:MainTheorem3}).}

The Conditions \ref{A:RecurrenceCondition0} and \ref{A:GrowthConditions0} are necessary to prove that  $ \sup_{t \geq 0} \mathbb{E}[ \norm{ \theta_t }^p ] < K$ for $p \geq 2$.  This uniform bound on moments, which we prove in Appendix \ref{StabilityBound}, is in turn required for the proofs of the L$^p$ convergence and central limit theorem in Theorems \ref{T:MainTheorem1} and \ref{T:MainTheorem2}.  The function $g(x, \theta)$ is allowed to grow, which means that some a priori bounds on the moments of $\theta_t$ are necessary.


In regards to the learning rate we assume
 \begin{condition}
\label{A:AssumptionLR}
The learning rate is $\alpha_t = \frac{ C_{\alpha}}{C_0+ t}$ where $C_{\alpha} > 0$ and $C_0 > 0$ are constants.
\end{condition}
Condition \ref{A:AssumptionLR} makes the presentation of the results easier. However, as we shall see in Corollary \ref{C:GeneralLearningRate} the specific form $\alpha_t = \frac{ C_{\alpha}}{C_0+ t}$ is not necessary as long as $\alpha_{t}$ satisfies certain conditions. We chose to present the results under this specific form both for presentation purposes and because this is the usual form that the learning rate takes in practice. However, we do present the result for a general learning rate in Corollary \ref{C:GeneralLearningRate}.


\begin{condition}
\label{A:Assumption1}
\begin{enumerate}
\item $\norm{\nabla_{\theta}^{i}f(x,\theta)}\leq K \left(1+\norm{x}^{q}+\norm{\theta}^{(2-i)\vee 0}\right)$, for $i=0,1,2,3$  and for some finite constants $K,q<\infty$.
\item  $\bar g (\theta)$ is strongly convex with constant $C$.
\item $C C_{\alpha}> 1$.
\item $\bar g (\theta) \in C^3$ and $\norm{\nabla_{\theta}^{i}\bar{g}(\theta)}\leq K \left(1+\norm{\theta}^{4-i}\right)$, for $i=0,1,2,3$  and for some finite constant $K<\infty$.
\end{enumerate}
\end{condition}

  As will be seen from the proof of the Theorems \ref{T:MainTheorem1} and \ref{T:MainTheorem2}, one needs to control terms of the form $\int_{0}^{t} \textcolor{black}{\alpha_{s}}(\nabla \bar{g}(\theta_{s})-g(X_{s},\theta_{s}))ds$. Due to ergodicity of the $X$ process one expects that such terms are small in magnitude and go to zero as $t\rightarrow\infty$. However, the speed at which they go to zero is what matters here. We treat such terms by rewriting them equivalently using appropriate Poisson type partial differential equations (PDE). Conditions \ref{A:LyapunovCondition}, \ref{A:Assumption0} and \ref{A:Assumption1}  guarantee that these Poisson equations have unique solutions that do not grow faster than polynomially in the $x$ and $\theta$ variables (see Theorem \ref{T:RegularityPoisson} in Appendix \ref{Preliminary}).

\begin{theorem}\label{T:MainTheorem1}
Assume that Conditions \ref{A:LyapunovCondition}, \ref{A:Assumption0}, \ref{A:RecurrenceCondition0}, \ref{A:GrowthConditions0},  \ref{A:AssumptionLR} and \ref{A:Assumption1} hold. Then, for $p\geq 1$, there exists a constant $0<K<\infty$ such that
\begin{eqnarray*}
\mathbb{E}[ \norm{ \theta_t - \theta^{\ast} }^{p} ] \leq \frac{K}{\left(C_0 + t\right)^{p/2}}.
\end{eqnarray*}
\end{theorem}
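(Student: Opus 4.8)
The plan is to start from the decomposition (\ref{DeComposition2}) and center the process at $\theta^{\ast}$. Writing $Z_t = \theta_t - \theta^{\ast}$, a Taylor expansion of $\nabla \bar g$ around $\theta^{\ast}$ (using $\nabla \bar g(\theta^{\ast}) = 0$ and the strong convexity bound $z^\top \Delta \bar g(\theta^{\ast}) z \geq C z^\top z$) gives $\nabla \bar g(\theta_t) = \Delta \bar g(\tilde\theta_t) Z_t$ for some $\tilde\theta_t$ on the segment between $\theta^{\ast}$ and $\theta_t$, and hence $\langle \nabla \bar g(\theta_t), Z_t\rangle \geq C \norm{Z_t}^2 - (\text{error that vanishes near }\theta^{\ast})$; more carefully I would split into the region where $\norm{Z_t}$ is small, where the Hessian is close to $\Delta\bar g(\theta^{\ast})$, and the region where $\norm{Z_t}$ is large, handled via Condition \ref{A:RecurrenceCondition0}. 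First I would apply Itô's formula to $\norm{Z_t}^{2}$ (or to $(C_0+t)^{\beta}\norm{Z_t}^{2}$ with an appropriately chosen exponent $\beta < CC_\alpha$), obtaining a drift term $-2\alpha_t \langle \nabla\bar g(\theta_t), Z_t\rangle \leq -2\alpha_t C \norm{Z_t}^2 + \dots$, a fluctuation term $-2\alpha_t \langle \nabla g(X_t,\theta_t) - \nabla\bar g(\theta_t), Z_t\rangle$, the quadratic variation term $\alpha_t^2 \norm{\nabla_\theta f(X_t,\theta_t)(\sigma\sigma^\top)^{-1}\sigma}^2$, and a martingale term. The martingale vanishes in expectation; the quadratic-variation term is $O(\alpha_t^2 \mathbb{E}[1 + \norm{X_t}^{2q} + \norm{\theta_t}^2]) = O(\alpha_t^2)$ using the uniform moment bound $\sup_t \mathbb{E}[\norm{\theta_t}^p] < \infty$ from Appendix \ref{StabilityBound} together with ergodicity bounds on $\mathbb{E}[\norm{X_t}^{2q}]$.

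The crux is the fluctuation term $\int_0^t \alpha_s \langle \nabla g(X_s,\theta_s) - \nabla\bar g(\theta_s), Z_s\rangle\, ds$. Here I would invoke the Poisson PDE machinery announced before Theorem \ref{T:RegularityPoisson}: let $v(x,\theta)$ solve $\mathcal{L}_x v(x,\theta) = \nabla g(x,\theta) - \nabla\bar g(\theta)$ where $\mathcal{L}_x$ is the generator of the $X$-process, with $v$ and its derivatives of polynomial growth in $x$ and $\theta$. Applying Itô's formula to $\alpha_s \langle v(X_s,\theta_s), Z_s\rangle$ converts the $ds$-integral of the fluctuation into: a boundary term $\alpha_t\langle v(X_t,\theta_t),Z_t\rangle$ (which is $O(\alpha_t)$ times something polynomially bounded — after Cauchy–Schwarz and the moment bounds, of lower order), a martingale, a term with $\dot\alpha_s = O(\alpha_s^2)$, and cross terms coming from $d\theta_s$ and $dZ_s$ which carry an extra factor $\alpha_s$ — so the whole fluctuation contribution is controlled at rate $\alpha_s^2$, matching the quadratic-variation term. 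The net effect is a differential inequality of the shape
\begin{eqnarray*}
\frac{d}{dt}\mathbb{E}[\norm{Z_t}^2] \leq -2C\alpha_t \mathbb{E}[\norm{Z_t}^2] + K\alpha_t^2 + (\text{terms absorbed using }CC_\alpha > 1).
\end{eqnarray*}

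With $\alpha_t = C_\alpha/(C_0+t)$, Gronwall's inequality applied to this inequality yields $\mathbb{E}[\norm{Z_t}^2] \leq K/(C_0+t)$, provided $2CC_\alpha > 1$, which is exactly Condition \ref{A:Assumption1}(3); this is the mechanism by which the learning rate must be large enough. To upgrade from $p=2$ to general $p \geq 1$: for $p \in [1,2)$ use Jensen's inequality on the $p=2$ result; for $p > 2$ I would instead apply Itô to $\norm{Z_t}^{p}$ (or $(C_0+t)^{\gamma}\norm{Z_t}^p$), where strong convexity again produces a dissipative term $-pC\alpha_t\norm{Z_t}^p$, the quadratic-variation term becomes $\sim \alpha_t^2 \norm{Z_t}^{p-2}(1 + \norm{X_t}^{2q} + \norm{\theta_t}^2)$ which by Young's inequality splits into a piece absorbable by the dissipative term and a piece of size $\alpha_t^p$, and the fluctuation term is handled by the same Poisson-equation substitution; Gronwall then gives the $(C_0+t)^{-p/2}$ rate under $pCC_\alpha > 1$ (or, more simply, one bootstraps the rate for each $p$ from lower-order estimates combined with the uniform moment bound). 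I expect the main obstacle to be bookkeeping in the Poisson-PDE step: verifying that every new term produced by Itô's formula applied to $v(X_s,\theta_s)$ genuinely carries the extra $\alpha_s$ (equivalently, is $o(\alpha_s)$ or $O(\alpha_s^2)$) and that the polynomial growth of $v$ in $\theta$ is dominated by the available uniform moment bounds — this is where the quadratic growth allowance in Condition \ref{A:Assumption1} and the regularity statement of Theorem \ref{T:RegularityPoisson} are used in an essential, non-routine way.
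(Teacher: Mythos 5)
Your proposal follows essentially the same route as the paper's proof: It\^{o}'s formula applied to $\norm{\theta_t-\theta^{\ast}}^{p}$, strong convexity supplying the dissipative term $-pC\alpha_t\norm{\theta_t-\theta^{\ast}}^{p}$, the Poisson equation $\mathcal{L}_x v = \nabla_\theta g - \nabla_\theta \bar g$ to reduce the fluctuation term to order $\alpha_t^2$, a Duhamel/Gronwall comparison with the weight $(s/t)^{pCC_\alpha}$, and the general-$p$ case obtained by bootstrapping from lower moments (the paper's induction with H\"{o}lder) plus Jensen for $p\in[1,2)$. Two minor corrections: Condition \ref{A:Assumption1}(3) is $CC_\alpha>1$, not $2CC_\alpha>1$, and the strong convexity in Condition \ref{A:Assumption1}(2) is global, so no splitting into near/far regions (or appeal to Condition \ref{A:RecurrenceCondition0} for the drift) is needed in the Taylor-expansion step.
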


In order to state  the central limit theorem results we need to introduce some notation. Let us denote by $v(x,\theta)$ the solution to the Poisson equation (\ref{Eq:CellProblem}) with $H(x, \theta) = \nabla_{\theta} g (x, \theta) - \nabla_{\theta} \bar g ( \theta)$. Let us also set
\[
h(x, \theta) =  \bigg{(}  \nabla_{\theta} f(x,\theta) (\sigma \sigma^{\top} )^{-1}   -  \nabla_x v(x,\theta) \bigg{)} \sigma\sigma^{\top}\bigg{(}  \nabla_{\theta} f(x,\theta)  (\sigma \sigma^{\top} )^{-1}   -  \nabla_x v(x,\theta) \bigg{)}^{\top}\text{ and }\bar h(\theta) = \int h(x, \theta) \pi(dx).
\]

\begin{theorem}\label{T:MainTheorem2}
Assume that Conditions \ref{A:LyapunovCondition}, \ref{A:Assumption0}, \ref{A:RecurrenceCondition0}, \ref{A:GrowthConditions0},  \ref{A:AssumptionLR} and \ref{A:Assumption1} hold. 
Then,
\begin{eqnarray*}
\sqrt{t} ( \theta_t - \theta^{\ast} ) \overset{d} \rightarrow \mathcal{N}(0,  \bar \Sigma ),
\end{eqnarray*}
where $\bar\Sigma$ is defined to be
\[
\bar{\Sigma}=C_{\alpha}^{2}\int_{0}^{\infty}e^{-s\left(C_{\alpha}\Delta\bar{g}(\theta^{\ast})-I\right)}\bar{h}(\theta^{\ast})e^{-s\left(C_{\alpha}\left(\Delta\bar{g}\right)^{\top}(\theta^{\ast})-I\right)}ds.
\]
\end{theorem}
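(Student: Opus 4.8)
The plan is to represent $\sqrt{t}(\theta_t - \theta^{\ast})$ as a stochastic integral via Duhamel's principle and then identify the limiting Gaussian law by a martingale CLT argument, using the $L^p$ rate of Theorem \ref{T:MainTheorem1} to control all error terms. First I would Taylor-expand $\nabla_\theta \bar g(\theta_t)$ around $\theta^{\ast}$: since $\nabla_\theta \bar g(\theta^{\ast}) = 0$, we get $\nabla_\theta \bar g(\theta_t) = \Delta \bar g(\tilde\theta_t)(\theta_t - \theta^{\ast})$ for some $\tilde\theta_t$ on the segment between $\theta^{\ast}$ and $\theta_t$. Plugging this into the decomposition (\ref{DeComposition2}), the process $Z_t := \theta_t - \theta^{\ast}$ satisfies a linear (random-coefficient) SDE $dZ_t = -\alpha_t \Delta\bar g(\tilde\theta_t) Z_t\, dt - \alpha_t(\nabla_\theta g(X_t,\theta_t) - \nabla_\theta\bar g(\theta_t))\,dt + \alpha_t \nabla_\theta f(X_t,\theta_t)(\sigma\sigma^\top)^{-1}\sigma\, dW_t$. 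Let $\Psi_{t,s}$ denote the fundamental solution of $d\Psi = -\alpha_t \Delta\bar g(\tilde\theta_t)\Psi\, dt$; then variation of parameters gives $Z_t = \Psi_{t,0}Z_0 - \int_0^t \Psi_{t,s}\alpha_s(\nabla_\theta g(X_s,\theta_s) - \nabla_\theta\bar g(\theta_s))\,ds + \int_0^t \Psi_{t,s}\alpha_s \nabla_\theta f(X_s,\theta_s)(\sigma\sigma^\top)^{-1}\sigma\, dW_s$.

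The next step is to handle the fluctuation term using the Poisson equation: let $v(x,\theta)$ solve (\ref{Eq:CellProblem}) with $H = \nabla_\theta g - \nabla_\theta\bar g$, so that $\mathcal{L}_x v(x,\theta) = -(\nabla_\theta g(x,\theta) - \nabla_\theta\bar g(\theta))$ where $\mathcal{L}_x$ is the generator of $X$. Applying Itô's formula to $v(X_s,\theta_s)$ and rearranging, I can trade the $ds$-integral of the fluctuation for boundary terms $\alpha_s v(X_s,\theta_s)$, a $dW_s$-martingale $-\alpha_s \nabla_x v(X_s,\theta_s)\sigma\, dW_s$, plus lower-order terms involving $\dot\alpha_s$, $\nabla_\theta v$ and the drift of $\theta_s$ (which carries an extra factor $\alpha_s$). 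Combining this with the genuine noise term merges the two martingale parts into a single stochastic integral against $dW_s$ with integrand $\alpha_s(\nabla_\theta f(X_s,\theta_s)(\sigma\sigma^\top)^{-1} - \nabla_x v(X_s,\theta_s))\sigma$ — this is exactly the combination whose "local variance" is $\alpha_s^2 h(X_s,\theta_s)$, which is why $\bar h$ enters the answer. So $\sqrt t Z_t = \sqrt t \cdot(\text{negligible terms}) + \sqrt t \int_0^t \Psi_{t,s}\alpha_s(\cdots)\sigma\, dW_s + \sqrt t \int_0^t \Psi_{t,s} (\text{remainder } O(\alpha_s^2)\text{-type terms})\, ds$.

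Then I would show all terms except the main martingale integral vanish. The $\Psi_{t,0}Z_0$ term and the $\alpha_t v(X_t,\theta_t)$-type boundary terms decay like $e^{-(CC_\alpha-1)\log(C_0+t)} = (C_0+t)^{-(CC_\alpha - 1)}$ times polynomial factors — since $CC_\alpha > 1$ these go to zero even after multiplying by $\sqrt t$; here I would use that $\tilde\theta_t \to \theta^{\ast}$ so $\Delta\bar g(\tilde\theta_t)$ is eventually close to $\Delta\bar g(\theta^{\ast})$ and its eigenvalues are bounded below by $C$, giving the contraction estimate for $\Psi_{t,s}$. The $ds$-remainder terms carry an extra $\alpha_s$ and, with the $L^p$ moment bounds on $\theta_s$ (Theorem \ref{T:MainTheorem1} plus $\sup_t \mathbb{E}\norm{\theta_t}^p < \infty$ from Appendix \ref{StabilityBound}) and the polynomial growth of $v$, $\nabla v$ from Theorem \ref{T:RegularityPoisson}, their contribution is $O(\sqrt t \int_0^t (C_0+s)^{-2}\cdot(\text{poly})\, ds)$ which is controlled. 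Finally, for the main martingale $M_t := \sqrt t \int_0^t \Psi_{t,s}\alpha_s(\cdots)\sigma\, dW_s$, I would replace $\Psi_{t,s}$ by $e^{-(\log(C_0+t) - \log(C_0+s))(C_\alpha\Delta\bar g(\theta^{\ast}))}$ (the frozen-coefficient approximation, with the error again absorbed using $\tilde\theta_s \to \theta^{\ast}$) and replace $h(X_s,\theta_s)$ by $\bar h(\theta^{\ast})$ in the quadratic variation via ergodicity and continuity. A change of variables $s = t e^{-u/C_\alpha}$ or the substitution $r = \log(C_0+t) - \log(C_0+s)$ turns the quadratic variation $\langle M\rangle_t$ into an integral converging to $C_\alpha^2\int_0^\infty e^{-r(C_\alpha\Delta\bar g(\theta^{\ast}) - I)}\bar h(\theta^{\ast}) e^{-r(C_\alpha(\Delta\bar g)^\top(\theta^{\ast}) - I)}\, dr = \bar\Sigma$; note this integral converges precisely because $CC_\alpha > 1$ makes $C_\alpha\Delta\bar g(\theta^{\ast}) - I$ have positive eigenvalues. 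Then the martingale CLT (checking a Lindeberg/Lyapunov condition, again via the moment bounds) yields $M_t \overset{d}\to \mathcal{N}(0,\bar\Sigma)$, completing the proof.

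The main obstacle I anticipate is the careful treatment of the random fundamental solution $\Psi_{t,s}$: it is driven by $\Delta\bar g(\tilde\theta_t)$ which is path-dependent and only converges to $\Delta\bar g(\theta^{\ast})$ asymptotically, so freezing it introduces an error that must be shown to be uniformly small over $0 \le s \le t$ in a way compatible with the $\sqrt t$ scaling. Making this rigorous requires combining the contraction estimate (valid once $\tilde\theta_s$ is in a small neighborhood of $\theta^{\ast}$) with the $L^p$ convergence rate to bound the "bad" early time interval, and then controlling the perturbation $\Psi_{t,s} - e^{-(\cdots)\Delta\bar g(\theta^{\ast})}$ via a Gronwall-type argument on $\int_s^t \alpha_r\norm{\Delta\bar g(\tilde\theta_r) - \Delta\bar g(\theta^{\ast})}\, dr$, whose expectation is finite and small by Theorem \ref{T:MainTheorem1}. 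The second delicate point is ensuring the Itô-correction and $\nabla_\theta v$ terms generated when applying Itô's formula to $v(X_s,\theta_s)$ genuinely carry the extra power of $\alpha_s$ needed for summability; this relies on the polynomial growth bounds on $v$ and its derivatives from Theorem \ref{T:RegularityPoisson} together with the uniform moment bounds on $(X_t,\theta_t)$.
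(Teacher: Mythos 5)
Your proposal is correct in its overall architecture and matches the paper's strategy on most points (Duhamel's principle, the Poisson equation to convert the fluctuation term into a boundary term plus a martingale, merging the two martingale parts into one with local variance $\alpha_s^2 h(X_s,\theta_s)$, and the logarithmic time change to identify $\bar\Sigma$). The one genuine structural difference is the order of the Taylor expansion. You expand to first order, writing $\nabla_\theta\bar g(\theta_t)=\Delta\bar g(\tilde\theta_t)(\theta_t-\theta^{\ast})$, which leaves you with a \emph{random}, path-dependent fundamental solution $\Psi_{t,s}$ that must later be compared to the frozen-coefficient solution via a perturbation/Gronwall argument on $\int_s^t\alpha_r\norm{\Delta\bar g(\tilde\theta_r)-\Delta\bar g(\theta^{\ast})}\,dr$. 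The paper instead expands to \emph{second} order around $\theta^{\ast}$, so that the linear part is driven by the constant matrix $\Delta\bar g(\theta^{\ast})$ from the outset: the fundamental solution $\Phi^{*}_{t,s}$ is deterministic and explicitly diagonalizable, and the price is a quadratic remainder $\int_1^t\Phi^{*}_{t,s}\frac{\alpha_s}{2}\frac{\partial^3\bar g}{\partial\theta^3}(\theta_s^1)Y_sY_s^{\top}ds$, which is killed directly by the $L^p$ rate of Theorem \ref{T:MainTheorem1} with $p=2,3$. Your route is viable here because global strong convexity gives the contraction estimate for $\Psi_{t,s}$ without any localization (you do not actually need $\tilde\theta_t\to\theta^{\ast}$ for the contraction, only for the freezing step), and it is in fact essentially the route the paper takes in the non-convex case of Theorem \ref{T:MainTheorem3}, where the second-order trick is unavailable; but in the strongly convex setting the paper's choice buys a cleaner argument, since the eigenvalue decomposition of $\Phi^{*}_{t,s}$ and the explicit computation of $\bar\Sigma_t$ require no stochastic perturbation analysis. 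If you carry out your version, the two delicate points you flag are exactly the right ones, and you should make the freezing error quantitative in $L^2$ (via $\mathbb{E}\norm{\Delta\bar g(\tilde\theta_r)-\Delta\bar g(\theta^{\ast})}^2\le Kr^{-1}$, using the at most linear growth of $\partial^3\bar g/\partial\theta^3$ and the uniform moment bounds) rather than relying on the qualitative statement that $\tilde\theta_t\to\theta^{\ast}$.
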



Our results of course immediately imply an $L^p$ convergence rate and a CLT for the case when there is no dependence on $X_t$.
\begin{corollary}
Assume that $\theta^{\ast}$ is the unique critical point of the function $g(\theta)$ and that
\begin{eqnarray}
d \theta_t = \alpha_t \big{(} -\nabla g(\theta_t) dt +  d W_t \big{)}.
\end{eqnarray}
has a unique strong solution. In addition, we assume that Conditions  \ref{A:Assumption0}.1, \ref{A:RecurrenceCondition0} (with $g(\theta)$ in place of $g(x,\theta)$),   \ref{A:AssumptionLR} and \ref{A:Assumption1}.2-\ref{A:Assumption1}.4 (with $g(\theta)$ in place of $\bar{g}(\theta)$) hold. Then, $\theta_t$ satisfies the $L^p$ convergence rate
\begin{eqnarray*}
\mathbb{E}[ \norm{ \theta_t - \theta^{\ast} }^{p} ] \leq \frac{K}{\left(C_0 + t\right)^{p/2}}.
\end{eqnarray*}
and the CLT
\begin{eqnarray*}
\sqrt{t} ( \theta_t - \theta^{\ast} ) \overset{d} \rightarrow \mathcal{N}(0,  \bar \Sigma ),
\end{eqnarray*}
where here
\[
\bar{\Sigma}=C_{\alpha}^{2}\int_{0}^{\infty}e^{-s\left(C_{\alpha}\Delta g(\theta^{\ast})-I\right)}e^{-s\left(C_{\alpha}\left(\Delta g\right)^{\top}(\theta^{\ast})-I\right)}ds.
\]
\end{corollary}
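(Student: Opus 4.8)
The plan is to obtain this corollary as a direct specialization of Theorems \ref{T:MainTheorem1} and \ref{T:MainTheorem2}, after recognizing the stated SDE as the $X$-free instance of the general decomposition (\ref{DeComposition2}). First I would observe that taking $g(x,\theta)=g(\theta)$ (no $x$-dependence) forces $\bar g(\theta)=\int g(\theta)\pi(dx)=g(\theta)$, so that the fluctuation term $\alpha_t\big(\nabla_\theta g(X_t,\theta_t)-\nabla_\theta\bar g(\theta_t)\big)$ vanishes identically; and replacing the diffusion coefficient by the identity turns the noise term into $\alpha_t\,dW_t$. Hence the dynamics in the corollary are precisely (\ref{DeComposition2}) with the middle term deleted and the last term simplified, so every estimate derived for the general process holds \emph{a fortiori}, and in fact the argument simplifies: the term $\int_0^t\alpha_s\big(\nabla\bar g(\theta_s)-\nabla g(X_s,\theta_s)\big)ds$ that normally requires the Poisson-PDE machinery is now identically zero.

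Next I would verify that the weakened hypotheses of the corollary still supply everything the two theorems invoke. The growth Condition \ref{A:Assumption1}.1 on $f$ is not needed because the noise coefficient is constant; the uniform moment bound $\sup_{t\ge0}\mathbb{E}[\norm{\theta_t}^p]<\infty$ of Appendix \ref{StabilityBound} continues to hold, since its argument requires only the dissipativity Condition \ref{A:RecurrenceCondition0} (here stated for $g(\theta)$) together with a bounded diffusion coefficient, and Condition \ref{A:GrowthConditions0} is automatically satisfied when $\nabla_\theta f$ is independent of $\theta$ as already remarked in the text. The Lyapunov Condition \ref{A:LyapunovCondition} and the parts of Condition \ref{A:Assumption0} concerning $f^\ast$ and the $X$-process are vacuous; if a literal reduction is preferred, one may adjoin a decoupled dummy ergodic diffusion $X_t$ (e.g.\ an Ornstein--Uhlenbeck process) satisfying Condition \ref{A:LyapunovCondition}, which leaves the $\theta$-dynamics unchanged. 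The remaining hypotheses \ref{A:Assumption0}.1, \ref{A:AssumptionLR}, and \ref{A:Assumption1}.2--\ref{A:Assumption1}.4 are assumed outright, so Theorems \ref{T:MainTheorem1} and \ref{T:MainTheorem2} apply and give the $L^p$ rate and the CLT with some covariance $\bar\Sigma$.

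It then remains to identify $\bar\Sigma$. Since $H(x,\theta)=\nabla_\theta g(x,\theta)-\nabla_\theta\bar g(\theta)\equiv 0$, the Poisson equation (\ref{Eq:CellProblem}) is solved by $v(x,\theta)\equiv 0$, so $\nabla_x v\equiv 0$ and $h(x,\theta)$ reduces to the square of the (constant) noise coefficient; equivalently, the martingale part of $\theta_t$ is simply $\int_0^t\alpha_s\,dW_s$, whose quadratic variation is $\int_0^t\alpha_s^2\,ds\,\cdot I_k$, so $\bar h(\theta^\ast)=I_k$. Substituting $\bar h(\theta^\ast)=I$ and $\Delta\bar g(\theta^\ast)=\Delta g(\theta^\ast)$ into the formula of Theorem \ref{T:MainTheorem2} yields the stated expression for $\bar\Sigma$. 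I do not expect a genuine obstacle here; the only point needing care is the bookkeeping of the reduction — checking that the stability estimate and the non-degeneracy hypotheses survive the passage to a setting with no $X$-process — and this is dispatched either by the dummy-diffusion device or by noting directly that the fluctuation term, hence every use of the Poisson PDE, disappears.
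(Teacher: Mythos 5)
Your proposal is correct and matches the paper's intent exactly: the paper states this corollary without proof, asserting it follows "immediately" by specializing Theorems \ref{T:MainTheorem1} and \ref{T:MainTheorem2} to the $x$-independent case, which is precisely your reduction (vanishing fluctuation term, trivial Poisson solution $v\equiv 0$, $\bar h(\theta^{\ast})=I$, constant noise coefficient making Conditions \ref{A:Assumption1}.1 and \ref{A:GrowthConditions0} automatic). Your additional bookkeeping of which hypotheses survive the reduction is a useful elaboration of what the paper leaves implicit.
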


The main result of \cite{SGDCT1} states that if $\bar{g}(\theta)$ and its derivatives are bounded with respect to $\theta$, then under assumptions of ergodicity for the $X$ process one has convergence of the algorithm, in the sense that \textcolor{black}{$\lim_{t \rightarrow \infty} \norm{\nabla_{\theta} \bar g (\theta_t) } = 0$}. In this paper, we allow growth of $\bar{g}(\theta)$ with respect to $\norm{\theta}$. In particular, as we shall state in Theorem \ref{T:AlternateMainTheoremSGD1} and prove in Appendix \ref{ConvergenceProofQuadraticGrowth}, the results of \cite{SGDCT1} hold true without considerable extra work if one allows up to linear growth for $f(x,\theta)$ with respect to $\theta$, which translates into up to quadratic growth for $\bar{g}(\theta)$ and up to linear growth of $\nabla \bar{g}(\theta)$ with respect to $\theta$. Let us formalize the required assumptions in the form of Condition \ref{A:AssumptionSGD2} below, which also strengthens Condition \ref{A:Assumption1}.
\begin{condition}
\label{A:AssumptionSGD2}
\begin{enumerate}
\item $\norm{\nabla_{\theta}^{i}f(x,\theta)}\leq K \left(1+\norm{x}^{q}+\norm{\theta}^{(1-i)\vee 0}\right)$, for $i=0,1,2$  and for some finite constants $K,q<\infty$.
\item $\norm{\nabla_{\theta}^{i}\bar{g}(\theta)}\leq K \left(1+\norm{\theta}^{2-i}\right)$, for $i=0,1,2$  and for some finite constant $K<\infty$.
\item $\nabla\bar g (\theta)$ is globally Lipschitz.
\end{enumerate}
\end{condition}

%

\begin{theorem}\label{T:AlternateMainTheoremSGD1}
Assume that Conditions \ref{A:LyapunovCondition}, \ref{A:Assumption0}, \ref{A:RecurrenceCondition0},  \ref{A:GrowthConditions0}, \ref{A:AssumptionLR} and \ref{A:AssumptionSGD2} hold. Then,
\begin{eqnarray*}
\lim_{t \rightarrow \infty} \textcolor{black}{ \norm{\nabla_{\theta} \bar g (\theta_t) } }= 0, \phantom{....} a.s.
\end{eqnarray*}
\end{theorem}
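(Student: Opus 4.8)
The plan is to adapt the argument of \cite{SGDCT1}, whose proof uses a cycle of stopping times together with the decomposition (\ref{DeComposition2}), and to check that every estimate there survives the extra polynomial-in-$\theta$ growth now permitted by Condition \ref{A:AssumptionSGD2}. The first step is to record the a priori stability bound $\sup_{t\ge 0}\mathbb{E}[\norm{\theta_t}^p]<\infty$ for all $p\ge 1$: this follows from Conditions \ref{A:RecurrenceCondition0} and \ref{A:GrowthConditions0} via the comparison/Lyapunov argument in Appendix \ref{StabilityBound} (the role of Condition \ref{A:GrowthConditions0} being exactly to control the quadratic variation of the noise term $\alpha_t\nabla_\theta f(X_t,\theta_t)(\sigma\sigma^\top)^{-1}\sigma\,dW_t$ via the Osgood/Yamada-type condition $\int_0^\cdot\rho^{-2}=\infty$). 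With uniform moment bounds in hand, one also gets finiteness of $\mathbb{E}[\bar g(\theta_t)]$ and of the relevant Poisson-equation correctors, since by Theorem \ref{T:RegularityPoisson} the solution $v(x,\theta)$ of the Poisson equation with right-hand side $\nabla_\theta g(x,\theta)-\nabla_\theta\bar g(\theta)$ and its $x$-gradient grow at most polynomially in $(x,\theta)$.

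Next I would rewrite the fluctuation term using that Poisson equation. Applying Itô's formula to $v(X_t,\theta_t)$ and using $\mathcal{L}_x v = \nabla_\theta g - \nabla_\theta\bar g$, the time integral $\int_0^t \alpha_s\big(\nabla_\theta g(X_s,\theta_s)-\nabla_\theta\bar g(\theta_s)\big)ds$ is re-expressed as boundary terms $\alpha_t v(X_t,\theta_t)-\alpha_0 v(X_0,\theta_0)$, an integral against $d\alpha_s$ (which carries an extra $\alpha_s$ because $|\dot\alpha_s|\le K\alpha_s^2$), a martingale $\int_0^t\alpha_s\nabla_x v(X_s,\theta_s)\sigma\,dW_s$, and a cross term coming from $\nabla_\theta v \, d\theta_s$ which again produces an $\alpha_s^2$ factor. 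Each of these is then shown to be $o(1)$ a.s. (for the martingales, by the strong law for martingales / Burkholder-Davis-Gundy together with $\int_0^\infty\alpha_s^2\,ds<\infty$, using the polynomial growth bounds and the uniform moment estimates; for the boundary and $d\alpha_s$ terms, directly). The key point is that these fluctuations are controlled with an extra power of $\alpha_t$, i.e. effectively of order $\alpha_t^2$, so they are summable.

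Then I would reproduce the stopping-time cycle of \cite{SGDCT1}: define times at which $\norm{\nabla_\theta\bar g(\theta_t)}$ is large versus small, and show, using (\ref{DeComposition2}) rewritten via the above, that $\bar g(\theta_t)$ is, up to an a.s.-vanishing remainder, a supermartingale-like quantity whenever $\theta_t$ is away from the critical set, forcing $\norm{\nabla_\theta\bar g(\theta_t)}\to 0$. Concretely one estimates $d\bar g(\theta_t)$ by Itô, gets the dissipative term $-\alpha_t\norm{\nabla_\theta\bar g(\theta_t)}^2\,dt$, the fluctuation contribution handled above, a noise martingale with quadratic variation $O(\alpha_t^2)$, and an Itô second-order term also $O(\alpha_t^2)$ using the quadratic growth bound on $\Delta\bar g$; one concludes by a real-analysis argument (if $\norm{\nabla_\theta\bar g(\theta_t)}$ did not converge to $0$ it would have to spend infinitely much ``$\alpha_t$-time'' bounded away from $0$, contradicting that $\bar g(\theta_t)\ge 0$ decreases by a summable-error amount). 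Throughout, the only substantive change from \cite{SGDCT1} is bookkeeping: every occurrence of ``$f$ bounded in $\theta$'' is replaced by the polynomial bound of Condition \ref{A:AssumptionSGD2}, and each resulting term is absorbed using $\sup_t\mathbb{E}[\norm{\theta_t}^p]<\infty$ and the Cauchy-Schwarz/Hölder inequality.

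The main obstacle is precisely this interplay: because $f(x,\theta)$ may now grow linearly in $\theta$, the fluctuation, noise, and Itô-correction terms all grow in $\theta$, so one cannot close the estimates without the uniform-in-time moment bound — and that bound in turn relies on the somewhat delicate Condition \ref{A:GrowthConditions0} (the Osgood-type condition on $\rho$) to rule out finite-time-blowup-type behavior of $\norm{\theta_t}$. Making sure the stopping-time cycle of \cite{SGDCT1} still closes once these $\theta$-dependent error terms are reinstated — i.e. that the ``bad'' excursions still cost a summable amount — is where the care is needed; the rest is routine adaptation, which is why the detailed verification is deferred to Appendix \ref{ConvergenceProofQuadraticGrowth}.
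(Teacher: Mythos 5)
Your proposal follows essentially the same route as the paper: establish the uniform moment bound $\sup_{t}\mathbb{E}\|\theta_t\|^p<\infty$ from Appendix \ref{StabilityBound}, then rerun the Poisson-equation/stopping-time argument of \cite{SGDCT1} with every occurrence of ``$f$ bounded in $\theta$'' replaced by the polynomial bound of Condition \ref{A:AssumptionSGD2}. The one place you are too quick is the boundary term $\alpha_t v(X_t,\theta_t)$, which you say is handled ``directly'': since $v$ grows polynomially in both $x$ and $\theta$, the uniform moment bounds only give $L^2$ decay of this term, and the paper's entire new work in Appendix \ref{ConvergenceProofQuadraticGrowth} is precisely the Markov-inequality-plus-Borel--Cantelli argument along the dyadic times $t=2^n$ (with interpolation in between) needed to upgrade $\mathbb{E}|J_t^{(1)}|^2\le K\alpha_t^2$ to almost sure convergence of $J_t^{(1)}=\alpha_t\|v(X_t,\theta_t)\|$ to zero.
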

Theorem \ref{T:AlternateMainTheoremSGD1} proves convergence for non-convex $\bar g(\theta)$ even when $f(x, \theta)$ grows at most linearly in $\theta$.  Theorem \ref{T:AlternateMainTheoremSGD1} is proven in Appendix \ref{ConvergenceProofQuadraticGrowth}.  The proof is based on the uniform bound for the moments of $\theta$ established in Appendix \ref{Preliminary}.


Theorem \ref{T:AlternateMainTheoremSGD1} is required for proving a central limit theorem for non-convex $\bar g (\theta)$.  The central limit theorem for non-convex $\bar g (\theta)$ is proven in Theorem \ref{T:MainTheorem3}.
\begin{condition}
\label{A:Assumption2}
\begin{enumerate}
\item $\bar g(\theta)$ may be non-convex but has a single critical point $\theta^{\ast}$.
\item There is a $\delta^{*}>0$ small enough such that  $\bar g (\theta)$ is strongly convex in the region $\norm{ \theta - \theta^{\ast} } < \delta^{\ast}$ with constant $C$ and $CC_{\alpha}>1$.
\item $\bar g (\theta) \in C^3$.
\item $\nabla_{\theta_i} \bar g(\theta) > 0$ if $\theta_i - \theta^{\ast}_i > R_0$ and $\nabla_{\theta_i} \bar g(\theta) < 0$ if $\theta_i - \theta^{\ast}_i < -R_0$ for $i =1, 2, \ldots, k$ and for some $R_0>0$ large enough.
\end{enumerate}
\end{condition}
The fourth part in Condition \ref{A:Assumption2} guarantees that $\nabla_{\theta_i} \bar g (\theta)$ always points inwards towards the global minimum $\theta^{\ast}$ if $| \theta_i|$ is sufficiently large.

\begin{theorem}\label{T:MainTheorem3}
Assume that Conditions \ref{A:LyapunovCondition}, \ref{A:Assumption0},  \ref{A:RecurrenceCondition0},  \ref{A:GrowthConditions0}, \ref{A:AssumptionLR}, \ref{A:AssumptionSGD2} and \ref{A:Assumption2} hold.   Then, we have that
\begin{eqnarray*}
\sqrt{t} ( \theta_t - \theta^{\ast} ) \overset{d} \rightarrow \mathcal{N}(0, \bar \Sigma ),
\end{eqnarray*}
where $\bar\Sigma$ is defined as in Theorem \ref{T:MainTheorem2}.
\end{theorem}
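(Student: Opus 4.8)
The plan is to reduce the non-convex case to a local analysis near $\theta^{\ast}$, after first establishing that $\theta_t$ is eventually trapped in a small strongly convex neighborhood of $\theta^{\ast}$. First I would invoke Theorem \ref{T:AlternateMainTheoremSGD1} to get $\norm{\nabla_{\theta}\bar g(\theta_t)}\to 0$ a.s.; combined with Condition \ref{A:Assumption2}.1 (single critical point) and Condition \ref{A:Assumption2}.4 (the gradient points inward for large $\norm{\theta_i}$, so all level sets are bounded), standard arguments give $\theta_t\to\theta^{\ast}$ a.s. Then I would define $\tau_{\delta}$ to be the \emph{last} time $\theta_t$ enters the ball $\{\norm{\theta-\theta^{\ast}}<\delta\}$ for $\delta<\delta^{\ast}$; by a.s. convergence $\tau_{\delta}<\infty$ a.s., and for $t\geq\tau_{\delta}$ the process stays in the region where $\bar g$ is strongly convex with constant $C$, $CC_{\alpha}>1$. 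As flagged in the introduction, $\tau_{\delta}$ is \emph{not} a stopping time, so I cannot directly apply the strong-Markov/Itô machinery from the strongly convex proof; this is where care is needed.

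Next, on $[\tau_{\delta},\infty)$ I would run essentially the argument of Theorem \ref{T:MainTheorem2}. Writing the decomposition (\ref{DeComposition2}), I would use the mean value form $\nabla_{\theta}\bar g(\theta_t)-\nabla_{\theta}\bar g(\theta^{\ast})=\Delta\bar g(\tilde\theta_t)(\theta_t-\theta^{\ast})$ with $\tilde\theta_t$ on the segment joining $\theta^{\ast}$ and $\theta_t$ (so $\tilde\theta_t$ also lies in the strongly convex region for $t\geq\tau_{\delta}$), introduce the fundamental matrix $\Psi_{t}$ solving $d\Psi_t=-\alpha_t\Delta\bar g(\tilde\theta_t)\Psi_t\,dt$, and apply Duhamel's principle to represent $\theta_t-\theta^{\ast}$ as (a $\Psi$-discounted transform of) the initial value at $\tau_{\delta}$ plus stochastic integrals of the fluctuation term and the noise term. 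The key decay estimate is that $\alpha_t\Delta\bar g(\tilde\theta_t)\succeq \alpha_t C I = \frac{CC_{\alpha}}{C_0+t}I$ with $CC_{\alpha}>1$, which yields $\norm{\Psi_t\Psi_s^{-1}}\lesssim \big((C_0+s)/(C_0+t)\big)^{CC_{\alpha}}$ and makes the contribution of the $\theta_{\tau_{\delta}}$ term vanish faster than $\sqrt{t}$. The fluctuation term $\alpha_t(\nabla_{\theta}g(X_t,\theta_t)-\nabla_{\theta}\bar g(\theta_t))$ is handled via the Poisson equation solution $v(x,\theta)$ from Appendix \ref{Preliminary}: Itô's formula on $v(X_t,\theta_t)$ converts it into terms of order $\alpha_t^2$, boundary terms of order $\alpha_t$, and a martingale that merges with the noise term. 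Computing the quadratic variation of the combined martingale $\sqrt{t}\,\alpha_t(\nabla_{\theta}f(X_t,\theta_t)(\sigma\sigma^{\top})^{-1}-\nabla_x v(X_t,\theta_t))\sigma\,dW_t$ and using ergodicity of $X_t$ (the occupation measure converges to $\pi$) identifies the limiting covariance as $\bar\Sigma=C_{\alpha}^2\int_0^{\infty}e^{-s(C_{\alpha}\Delta\bar g(\theta^{\ast})-I)}\bar h(\theta^{\ast})e^{-s(C_{\alpha}(\Delta\bar g)^{\top}(\theta^{\ast})-I)}\,ds$, and a martingale CLT (e.g. a time-changed / Rebolledo argument) delivers the asymptotic normality.

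To deal rigorously with the non-stopping-time nature of $\tau_{\delta}$, I would use the standard device of conditioning on the event $\{\tau_{\delta}\leq T\}$: on this event, for $t\geq T$, the process coincides with one started afresh inside the convex region, and one can bound things uniformly by taking $T$ large (so $\mathbb{P}(\tau_{\delta}>T)$ is small, using a.s. finiteness of $\tau_{\delta}$) and then passing to the limit $t\to\infty$. More precisely, for any bounded continuous $\phi$ one writes $\mathbb{E}[\phi(\sqrt t(\theta_t-\theta^{\ast}))]=\mathbb{E}[\phi(\cdot)\mathbf{1}_{\tau_{\delta}\leq T}]+\mathbb{E}[\phi(\cdot)\mathbf{1}_{\tau_{\delta}>T}]$, controls the second piece by $\norm{\phi}_{\infty}\mathbb{P}(\tau_{\delta}>T)$, and for the first piece runs the Duhamel/Poisson-equation CLT argument started at time $T$ (a genuine deterministic time) using the a priori moment bounds. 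Then let $t\to\infty$, then $T\to\infty$, then $\delta\to 0$. I expect the main obstacle to be precisely this step — making the "last entrance time" argument airtight, i.e. showing that the stochastic-integral representation and all the accompanying moment/tail estimates can be set up without $\tau_{\delta}$ being a stopping time, while still obtaining convergence to the same Gaussian limit independent of $\delta$ — together with verifying that the a priori moment bounds of Appendix \ref{Preliminary} (which used Conditions \ref{A:RecurrenceCondition0}, \ref{A:GrowthConditions0}, \ref{A:AssumptionSGD2}) are strong enough to control the fluctuation and Poisson-equation remainder terms when $f(x,\theta)$ is allowed to grow linearly in $\theta$.
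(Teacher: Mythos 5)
Your overall architecture coincides with the paper's: Theorem \ref{T:AlternateMainTheoremSGD1} plus Condition \ref{A:Assumption2}.4 to get $\theta_t\overset{a.s.}\rightarrow\theta^{\ast}$, the last-entrance time $\tau_{\delta}$ into the strongly convex ball, the mean-value Hessian and the fundamental matrix $\Phi_{t,s}$ with $\norm{\Phi_{t,s}}\lesssim (s/t)^{CC_{\alpha}}$ after $\tau_{\delta}$, Duhamel, the Poisson-equation reduction of the fluctuation term, and identification of $\bar\Sigma$ via convergence of the quadratic variation. The one place you genuinely deviate is the device for coping with the fact that $\tau_{\delta}$ is not a stopping time, and there your proposal has a gap. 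The event $\{\tau_{\delta}\leq T\}$ is \emph{not} $\mathcal{F}_T$-measurable (it asserts that the path never leaves the $\delta$-ball after $T$), so ``running the Duhamel/Poisson CLT argument started at the deterministic time $T$ on this event'' is not a clean reduction: multiplying the stochastic integrals by the non-adapted indicator $\mathbf{1}_{\tau_{\delta}\leq T}$ destroys the (already only approximate, since the kernel $\Phi_{t,s}$ anticipates through $\theta^1_u$, $u\in[s,t]$) martingale structure you would need for the martingale CLT, and the strong-convexity bound $\Delta\bar g(\theta^1_u)\succeq CI$ for $u\geq T$ holds only on that event, not pathwise. The paper avoids this by working pathwise throughout: every Riemann and stochastic integral is split at $\tau_{\delta}\wedge t$ and $\tau_{\delta}\vee t$, the pre-$\tau_{\delta}$ piece is factored via the semigroup property $\Phi_{t,s}=\Phi_{t,\tau_{\delta}}\Phi_{\tau_{\delta},s}$ into an a.s.-finite random constant times $t^{-CC_{\alpha}+1/2}$, the post-$\tau_{\delta}$ piece is controlled by a Markov-plus-Borel--Cantelli argument along dyadic times (Lemma \ref{L:MainTermsConvergenceZero}), stochastic integrals are handled by perturbing with an auxiliary Gaussian (Lemma \ref{KutoyantsExtensionLemma}), and the quadratic-variation comparison lemmas yield $\limsup_t$ bounds of order $K\delta$ that are killed by letting $\delta\to 0$ at the very end. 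If you replace your conditioning step by these pathwise estimates (or by some equivalent argument that establishes convergence in probability of the remainder terms and of the quadratic variation without conditioning on a future-measurable event), the rest of your outline goes through and delivers the stated Gaussian limit.
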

If $f(x, \theta)$ and $\nabla_{\theta} f(x, \theta)$ are uniformly bounded in $\theta$ and polynomially bounded in $x$, Theorem \ref{T:MainTheorem3} is true without Conditions \ref{A:RecurrenceCondition0} and \ref{A:GrowthConditions0}.

Proposition \ref{C:GeneralLearningRate} shows that under certain conditions on the learning rate $\alpha_{t}$, the specific form of the learning rate assumed in Condition \ref{A:AssumptionLR} is not necessary.  In particular, the convergence rate and central limit theorem results can be proven for a general learning rate $\alpha_t$. The proof of Proposition \ref{C:GeneralLearningRate} follows exactly the same steps as the proofs of Theorems \ref{T:MainTheorem1}, \ref{T:MainTheorem2}, and \ref{T:MainTheorem3} albeit more tedious algebra and is omitted.
\begin{proposition}\label{C:GeneralLearningRate}
 Let us denote $\Psi^{(p)}_{t,s}=e^{-pC\int_{s}^{t}\alpha_{\rho}d\rho}, \text{ for } p\geq 1$ and consider the matrix-valued solution $\Phi^{*}_{t,s}$  to equation (\ref{FundamentalSolution2}). Theorems \ref{T:MainTheorem1}, \ref{T:MainTheorem2}, and \ref{T:MainTheorem3} also hold under a general learning rate $\alpha_t$ satisfying the conditions:
\begin{eqnarray}
&& \int_{0}^{\infty}\alpha_{t}dt=\infty, \int_{0}^{\infty}\alpha^{2}_{t}dt<\infty, \int_{0}^{\infty}|\alpha_{s}^{\prime}|ds<\infty, \exists p>0 \text{ such that }\lim_{t\rightarrow\infty}\alpha_{t}t^{p}=0,\notag\\
&& \forall p\geq 2\quad \int_{0}^{t} \left(\alpha^{2}_{s}+|\alpha_{s}^{\prime}|\right) \Psi^{(p)}_{t,s} \alpha_{s}^{p/2-1} ds  \leq O(\alpha^{p/2}_{t}), \text{ and } \int_{0}^{t} \alpha^{2}_{s} \Psi^{(1)}_{t,s}  ds  \leq o(\alpha^{1/2}_{t}) \notag\\ 
&& \int_{0}^{t} \alpha^{5/2}_{s} \Psi^{(2)}_{t,s}  ds  \leq o(\alpha_{t}),\qquad \int_{0}^{t} \alpha_{s}^{2} \norm{\Phi^{*}_{t,s}}^2 ds = O(\alpha_{t}), \notag \\
&& \forall p\geq 2\quad \Psi^{(p)}_{t,0}\leq O(\alpha^{p/2}_{t}) \text{ and } \Psi^{(1)}_{t,0}  \leq o(\alpha_{t}^{1/2}).\nonumber
\end{eqnarray}
  In particular, the statements of Theorems \ref{T:MainTheorem1}, \ref{T:MainTheorem2}, and \ref{T:MainTheorem3} then take the form
\begin{eqnarray*}
\mathbb{E}[ \norm{ \theta_t - \theta^{\ast} }^{p} ] \leq K\alpha_{t}^{p/2}
\end{eqnarray*}
and
\begin{eqnarray*}
\alpha^{-1/2}_{t} ( \theta_t - \theta^{\ast} ) \overset{d} \rightarrow \mathcal{N}(0, \bar \Sigma ),
\end{eqnarray*}
where now $\bar{\Sigma}=\left[\bar{\Sigma}_{i,j}\right]_{i,j=1}^{k}$ is as in (\ref{Eq:LimitingSigma}) but with the bracket term $\left[ \frac{C_{\alpha}^{2}}{\left(\lambda_{m}+\lambda_{m'}\right)C_{\alpha}-1}\right] $ replaced by \[\left[\lim_{t\rightarrow\infty}\alpha_{t}^{-1}\int_{0}^{t}\alpha_{s}^{2}e^{-(\lambda_{m}+\lambda_{m'})\int_{s}^{t}\alpha_{u}du}ds\right].\]
\end{proposition}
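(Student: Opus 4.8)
The plan is to re-examine the proofs of Theorems \ref{T:MainTheorem1}, \ref{T:MainTheorem2} and \ref{T:MainTheorem3} and to track, at every step, exactly how the specific form $\alpha_t=C_\alpha/(C_0+t)$ is used. As it turns out, this form enters only through two kinds of ingredients: (i) the ``Robbins--Monro type'' facts $\int_0^\infty\alpha_t\,dt=\infty$, $\int_0^\infty\alpha_t^2\,dt<\infty$, $\int_0^\infty|\alpha_t'|\,dt<\infty$ and $\alpha_t t^p\to0$ for some $p>0$, which are what the preliminary results genuinely require (well-posedness and polynomial growth of the solution of the Poisson equation (\ref{Eq:CellProblem}) via Theorem \ref{T:RegularityPoisson}, the uniform moment bound $\sup_{t\ge0}\mathbb{E}[\norm{\theta_t}^p]<\infty$ of Appendix \ref{Preliminary}, and the a.s.\ convergence of Theorem \ref{T:AlternateMainTheoremSGD1}); and (ii) a handful of explicit integral estimates for the fundamental solutions $\Psi^{(p)}_{t,s}$ and $\Phi^{*}_{t,s}$, each of which has been isolated as one of the displayed hypotheses of the proposition. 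The proof therefore amounts to substituting, at each occurrence, the corresponding abstract bound for the explicit computation done in the $C_\alpha/(C_0+t)$ case, theorem by theorem.

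For the $L^p$ rate I would linearize $\nabla_\theta\bar g$ about $\theta^{\ast}$, represent $\theta_t-\theta^{\ast}$ by variation of constants against the scalar comparison kernel $\Psi^{(1)}_{t,s}=e^{-C\int_s^t\alpha_\rho d\rho}$ furnished by strong convexity (and against $\Psi^{(p)}_{t,s}$ for the higher moments after Hölder and Burkholder--Davis--Gundy), and collect the three resulting contributions: the decaying initial datum, controlled by $\Psi^{(p)}_{t,0}\le O(\alpha_t^{p/2})$; the fluctuation term rewritten through the Poisson equation, whose Itô expansion produces factors $\alpha_s^2+|\alpha_s'|$; and the martingale noise term, producing factors $\alpha_s^2$ together with an extra $\alpha_s^{p/2-1}$ after the $L^p$ bookkeeping. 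The bound $\int_0^t(\alpha_s^2+|\alpha_s'|)\Psi^{(p)}_{t,s}\alpha_s^{p/2-1}\,ds\le O(\alpha_t^{p/2})$ then closes a Gr\"onwall/bootstrap argument and yields $\mathbb{E}[\norm{\theta_t-\theta^{\ast}}^p]\le K\alpha_t^{p/2}$; this is precisely the abstraction of the estimate $\bigl((C_0+s)/(C_0+t)\bigr)^{pCC_\alpha}$-type computation that is valid exactly when $pCC_\alpha>1$, which is where the condition $CC_\alpha>1$ originates.

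For the CLT I would write $\alpha_t^{-1/2}(\theta_t-\theta^{\ast})$ by Duhamel's principle against the matrix fundamental solution $\Phi^{*}_{t,s}$ of (\ref{FundamentalSolution2}) and split it into the martingale (noise) integral, the Poisson/fluctuation remainder, the correction arising from linearizing $\nabla_\theta\bar g$ along $\tilde\theta_s$, and the initial-condition term. The last three are shown to converge to zero in probability after the $\alpha_t^{-1/2}$ scaling using $\int_0^t\alpha_s^2\Psi^{(1)}_{t,s}\,ds\le o(\alpha_t^{1/2})$, $\int_0^t\alpha_s^{5/2}\Psi^{(2)}_{t,s}\,ds\le o(\alpha_t)$, $\Psi^{(1)}_{t,0}\le o(\alpha_t^{1/2})$, together with the $L^p$ rate just proven. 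For the martingale term I would apply a martingale central limit theorem: its rescaled quadratic variation $\alpha_t^{-1}\int_0^t\alpha_s^2\,\Phi^{*}_{t,s}\,h(X_s,\theta_s)\,(\Phi^{*}_{t,s})^{\top}\,ds$ converges because $\tilde\theta_s\to\theta^{\ast}$ a.s.\ (Theorem \ref{T:AlternateMainTheoremSGD1}), so $\Phi^{*}_{t,s}$ is asymptotically the frozen-coefficient exponential $e^{-\int_s^t\alpha_u du\,\Delta\bar g(\theta^{\ast})}$ with the error absorbed by $\int_0^t\alpha_s^2\norm{\Phi^{*}_{t,s}}^2\,ds=O(\alpha_t)$; because ergodicity of $X$ replaces $h(X_s,\theta_s)$ by $\bar h(\theta^{\ast})$; and because diagonalizing $\Delta\bar g(\theta^{\ast})$ with eigenvalues $\{\lambda_m\}$ turns the surviving integral into the bracket $\lim_{t\to\infty}\alpha_t^{-1}\int_0^t\alpha_s^2 e^{-(\lambda_m+\lambda_{m'})\int_s^t\alpha_u du}\,ds$, which reduces to $C_\alpha^2/\bigl((\lambda_m+\lambda_{m'})C_\alpha-1\bigr)$ in the special case. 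In the non-convex setting of Theorem \ref{T:MainTheorem3} the same argument is carried out on the interval after $\tau_\delta$, the last entry time into a neighborhood of $\theta^{\ast}$, exactly as in Section \ref{QuasiConvexproof}.

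The main obstacle, as in the original proofs, is the quadratic-variation limit for the CLT under a merely abstract $\alpha_t$: one must show that the contribution to $\int_0^t\alpha_s^2\,\Phi^{*}_{t,s}\,h(X_s,\theta_s)\,(\Phi^{*}_{t,s})^{\top}\,ds$ is dominated by the region where $s$ is close to $t$, so that replacing $\Phi^{*}_{t,s}$ by the frozen-coefficient exponential and $h(X_s,\theta_s)$ by $\bar h(\theta^{\ast})$ is legitimate and the resulting error is $o(\alpha_t)$ after the $\alpha_t^{-1/2}$ scaling --- which is precisely what the hypotheses $\int_0^t\alpha_s^2\norm{\Phi^{*}_{t,s}}^2\,ds=O(\alpha_t)$ and the various $o(\cdot)$ conditions on $\Psi^{(1)}$ and $\Psi^{(2)}$ are engineered to supply, with the non-stopping-time nature of $\tau_\delta$ adding the usual localization bookkeeping in the non-convex case. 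Once these estimates are granted, the remainder of the argument is the same chain of inequalities as in Sections \ref{L2proof}--\ref{QuasiConvexproof} with $C_\alpha/(C_0+t)$ replaced symbolically by $\alpha_t$, which is why the (more tedious) algebra is omitted.
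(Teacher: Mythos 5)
Your proposal is correct and is essentially the argument the paper has in mind: the paper explicitly omits the proof of Proposition \ref{C:GeneralLearningRate} on the grounds that it ``follows exactly the same steps as the proofs of Theorems \ref{T:MainTheorem1}, \ref{T:MainTheorem2}, and \ref{T:MainTheorem3} albeit more tedious algebra,'' and your theorem-by-theorem substitution of each abstract integral hypothesis for the corresponding explicit $\left(s/t\right)^{pCC_{\alpha}}$-type computation (including the origin of the $|\alpha_s'|$ factor in the It\^{o} expansion of $\alpha_s\Psi^{(p)}_{t,s}\norm{Y_s}^{p-2}v_s$ and the role of $\int_0^t\alpha_s^2\norm{\Phi^{*}_{t,s}}^2ds=O(\alpha_t)$ in the quadratic-variation limit) is precisely that omitted argument.
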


%
%
It is easy to check that if we use $\alpha_{s}=\frac{C_{\alpha}}{C_{0}+s}$ as the learning rate, then the conditions that appear in Proposition \ref{C:GeneralLearningRate} all hold if $CC_{\alpha}>1$.

We conclude this section by mentioning that in the bounds that appear in the subsequent sections, $0<K<\infty$ will denote unimportant fixed constants (that do not depend on $t$ or other important parameters). The constant $K$ may change from line to line but it will always be denoted by the same symbol $K$.  Without loss of generality and in order to simplify notation in the proofs, we will let $C_{0}=0$, consider $t\geq 1$ (i.e., the initial time is set to $t = 1$), and let $\sigma$ be the identity matrix.

\section{Theorem \ref{T:MainTheorem1} - L$^p$ convergence rate in strongly convex case} \label{L2proof}
The proofs in this paper will repeatedly make use of two important uniform moment bounds. First, as we  prove  in Appendix \ref{StabilityBound}, we have, for $p \geq 2$, that
\begin{eqnarray*}
\displaystyle \sup_{t \geq 0} \mathbb{E}[ \norm{ \theta_t }^p ] < K,
\end{eqnarray*}
Second, it is  known from \cite{PardouxVeretennikov1} that under the imposed conditions for the $X$ process, for $p \geq 2$,
\begin{eqnarray*}
\displaystyle \sup_{t \geq 0} \mathbb{E}[ \norm{ X_t }^p ] < K.
\end{eqnarray*}

To begin the proof for the L$^p$ convergence rate, re-write the algorithm (\ref{SDEMain}) for $\theta_t$ in terms of  $\bar g(\theta)$.
\begin{eqnarray}
d \theta_t &=& \alpha_t \nabla_{\theta} f(X_t, \theta_t)  ( dX_t - f(X_t, \theta_t)  dt) \notag \\
& =& \alpha_t  \nabla_{\theta} f(X_t, \theta_t)  ( f^{\ast}(x) - f(X_t, \theta_t) )dt +  \alpha_t  \nabla_{\theta} f(X_t, \theta_t) d W_t \notag \\
&=&- \alpha_t  \nabla_{\theta} g(X_t, \theta_t) dt +  \alpha_t  \nabla_{\theta} f(X_t, \theta_t) d W_t  \notag \\
&=&- \alpha_t  \nabla_{\theta} \textcolor{black}{\bar g (\theta_t) }  +   \alpha_t \big{(} \nabla_{\theta} \bar g (\theta_t ) - \nabla_{\theta} g(X_t, \theta_t) \big{)} dt +  \alpha_t  \nabla_{\theta} f(X_t, \theta_t) d W_t.
\label{AlgorithmRewrite}
\end{eqnarray}

A Taylor expansion yields:
\begin{eqnarray*}
\nabla_{\theta} \bar g (\theta_t) &=&  \nabla_{\theta} \bar g (\theta^{\ast} ) + \Delta \bar g (\theta_t^1) ( \theta_t - \theta^{\ast} ) = \Delta \bar g (\theta_t^1) ( \theta_t - \theta^{\ast} ),
\end{eqnarray*}
where $\theta_t^1$ is an appropriately chosen point in the segment connecting $ \theta_t$ and $ \theta^{\ast}$. Substituting this Taylor expansion into equation (\ref{AlgorithmRewrite}) produces the equation:
\begin{eqnarray*}
d  ( \theta_t  - \theta^{\ast} )&=& - \alpha_t \Delta \bar g( \theta_t^1) ( \theta_t  - \theta^{\ast})    dt   + \alpha_t ( \nabla_{\theta} \bar g(\theta_t) - \nabla_{\theta} g (X_t, \theta_t ) ) dt + \alpha_t \nabla_{\theta} f(X_t, \theta_t) d W_t.
\end{eqnarray*}

Let $Y_t = \theta_t - \theta^{\ast}$.  Then, $Y_t$ satisfies the SDE
\begin{eqnarray*}
d Y_t = - \alpha_t \Delta \bar g( \theta_t^1) Y_t   dt   + \alpha_t ( \nabla_{\theta} \bar g(\theta_t) - \nabla_{\theta} g (X_t, \theta_t ) ) dt  + \alpha_t \nabla_{\theta} f(X_t, \theta_t) d W_t.
\end{eqnarray*}

By It\^{o} formula, we then have for $p\geq 2$
\begin{align}
d\norm{Y_{t}}^{p}&=p\norm{Y_{t}}^{p-2}\sum_{i,k}Y_{t}^{k}\alpha_{t}\left(\nabla_{\theta}f(X_{t},\theta_{t})\right)_{k,i}dW_{t}^{i}-p\alpha_{t}\norm{Y_{t}}^{p-2}\left<Y_{t},\Delta \bar{g}(\theta^{1}_{t})Y_{t}\right>dt\nonumber\\
&+p\alpha_{t}\norm{Y_{t}}^{p-2}\left<Y_{t}, \nabla_{\theta} \bar g(\theta_t) - \nabla_{\theta} g (X_t, \theta_t )\right>dt\nonumber\\
&+\frac{p}{2}\alpha_{t}^{2}\norm{Y_{t}}^{p-2}
\left[\sum_{i,k}\left(\left(\nabla_{\theta}f(X_{t},\theta_{t})\right)_{i,k}\right)^{2}+(p-2)\sum_{i}\left(\sum_{k}\norm{Y_{t}}^{-1}Y_{t}^{k}\left(\nabla_{\theta}f(X_{t},\theta_{t})\right)_{k,i}\right)^{2}\right]dt.\label{Eq:Ynorm}
\end{align}

Using the strong convexity of $\bar{g}$ we obtain the inequality
\begin{align}
d\norm{Y_{t}}^{p}&\leq p\norm{Y_{t}}^{p-2}\sum_{i,k}Y_{t}^{k}\alpha_{t}\left(\nabla_{\theta}f(X_{t},\theta_{t})\right)_{k,i}dW_{t}^{i}-pC \alpha_{t}\norm{Y_{t}}^{p}
dt\nonumber\\
&+p\alpha_{t}\norm{Y_{t}}^{p-2}\left<Y_{t}, \nabla_{\theta} \bar g(\theta_t) - \nabla_{\theta} g (X_t, \theta_t )\right>dt\nonumber\\
&+\frac{p}{2}\alpha_{t}^{2}\norm{Y_{t}}^{p-2}
\left[\sum_{i,k}\left(\left(\nabla_{\theta}f(X_{t},\theta_{t})\right)_{i,k}\right)^{2}+(p-2)\sum_{i}\left(\sum_{k}\norm{Y_{t}}^{-1}Y_{t}^{k}\left(\nabla_{\theta}f(X_{t},\theta_{t})\right)_{k,i}\right)^{2}\right]dt.\nonumber
\end{align}

Let's now define the process
\begin{align}
M_{t}&=p\int_{1}^{t}e^{-pC\int_{s}^{t}\alpha_{\rho}d\rho}\alpha_{s}\norm{Y_{s}}^{p-2}\sum_{i,k}Y_{s}^{k}\left(\nabla_{\theta}f(X_{s},\theta_{s})\right)_{k,i}dW_{s}^{i},\nonumber
\end{align}
and notice that $M_{t}$ solves the SDE
\begin{align}
dM_{t}&=-pC\alpha_{t}M_{t}dt+p\norm{Y_{t}}^{p-2}\sum_{i,k}Y_{t}^{k}\left(\nabla_{\theta}f(X_{t},\theta_{t})\right)_{k,i}dW_{t}^{i}.\nonumber
\end{align}

Next, if we set $\Gamma_{t}=\norm{Y_{t}}^{p}-M_{t}$ we obtain that
\begin{align}
d\Gamma_{t}&\leq -pC \alpha_{t}\Gamma_{t}dt+p\alpha_{t}\norm{Y_{t}}^{p-2}\left<Y_{t}, \nabla_{\theta} \bar g(\theta_t) - \nabla_{\theta} g (X_t, \theta_t )\right>dt\nonumber\\
&+\frac{p}{2}\alpha_{t}^{2}\norm{Y_{t}}^{p-2}
\left[\sum_{i,k}\left(\left(\nabla_{\theta}f(X_{t},\theta_{t})\right)_{i,k}\right)^{2}+(p-2)\sum_{i}\left(\sum_{k}\norm{Y_{t}}^{-1}Y_{t}^{k}\left(\nabla_{\theta}f(X_{t},\theta_{t})\right)_{k,i}\right)^{2}\right]dt.\nonumber
\end{align}

Next, we define the function
\begin{align*}
\Psi^{(p)}_{t,s}=e^{-pC\int_{s}^{t}\alpha_{\rho}d\rho}=\left(\frac{s}{t}\right)^{pCC_{\alpha}},
\end{align*}
and the comparison principle gives
\begin{align}
\Gamma_{t}&\leq \Psi^{(p)}_{t,1}\norm{Y_{1}}^{p}+p\int_{1}^{t}\left[\alpha_{s}\Psi^{(p)}_{t,s}\norm{Y_{s}}^{p-2}\left<Y_{s}, \nabla_{\theta} \bar g(\theta_s) - \nabla_{\theta} g (X_s, \theta_s )\right>\right]ds\nonumber\\
&+\frac{p}{2}\int_{1}^{t}\Psi^{(p)}_{t,s}\alpha_{s}^{2}\norm{Y_{s}}^{p-2}
\left[\sum_{i,k}\left(\left(\nabla_{\theta}f(X_{s},\theta_{s})\right)_{i,k}\right)^{2}+(p-2)\sum_{i}\left(\sum_{k}\norm{Y_{s}}^{-1}Y_{s}^{k}\left(\nabla_{\theta}f(X_{s},\theta_{s})\right)_{k,i}\right)^{2}\right]ds\nonumber\\
&=\Gamma_{t}^{1}+\Gamma_{t}^{2}+\Gamma_{t}^{3}.\label{Eq:GammaInequality}
\end{align}

The next step is to rewrite the second term of (\ref{Eq:GammaInequality}), i.e., $\Gamma_{t}^{2}= \textcolor{black}{p} \int_{1}^{t}\left[\alpha_{s}\Psi^{(p)}_{t,s}\norm{Y_{s}}^{p-2}\left<Y_{s}, \nabla_{\theta} \bar g(\theta_s) - \nabla_{\theta} g (X_s, \theta_s )\right>\right]ds$.  We construct the corresponding Poisson equation and use its solution to analyze $\Gamma_{t}^{2}$. Define $G(x, \theta) =  \left<\theta-\theta^{*},\nabla_{\theta} \bar g(\theta) - \nabla_{\theta} g (x, \theta ) \right>$ and let $v(x, \theta)$ be the solution to the PDE $\mathcal{L}_x v(x, \theta) = G(x, \theta)$. Here $\mathcal{L}_x$ is the infinitesimal generator of the $X$ process. Due to Theorem \ref{T:RegularityPoisson}, the Poisson PDE solution satsfies:
\begin{eqnarray}
&&  \norm{v (x, \theta) } + \norm{ \nabla_x v (x, \theta) }  \leq  K ( 1+  \norm{ \theta}^{m_1} ) (1 + \norm{x}^{m_2}) ,  \notag \\
&& \sum_{i=1,2} \norm{ \frac{\partial^i v}{\partial \theta^i }(x, \theta) } + \norm{ \frac{\partial^2 v}{\partial x \partial \theta}(x, \theta) }  + \norm{ \frac{\partial^3 v}{\partial x \partial \theta^2}(x, \theta) }   \leq  K ( 1+  \norm{ \theta}^{m_3} )( 1 + \norm{x}^{m_4} )
\label{BoundL2PDE}
\end{eqnarray}
for appropriate, but unimportant for our purposes, constants $m_{1},m_{2},m_{3},m_{4}$. By It\^{o}'s formula:
\begin{eqnarray*}
v(X_t, \theta_t) - v(X_s, \theta_s) &=&  \int_s^t \mathcal{L}_{x} v(X_{u},\theta_{u})du+\int_s^t   \mathcal{L}_{\theta } v(X_{u},\theta_{u})du  \notag \\
&+& \int_{s}^t \nabla_{x} v(X_{u},\theta_{u}) dW_{u} +  \int_{s}^t  \alpha_u \nabla_{\theta} v (X_{u},\theta_{u})  \nabla_{\theta} f(X_u, \theta_u) dW_{u} \notag \\
& + & \int_s^t \alpha_u \nabla_{\theta x} v(X_u, \theta_u) \nabla_{\theta} f(X_u, \theta_u) du,
\end{eqnarray*}
\textcolor{black}{where $\mathcal{L}_{\theta}$ is the infinitesimal generator of the $\theta_t$ process and $\mathcal{L}_x$ is the infinitesimal generator of the $X_t$ process.}

Define $v_t \equiv v(X_t, \theta_t)$ and recognize that:
\begin{eqnarray}
G (X_t, \theta_t)  dt  &=& \mathcal{L}_{x} v(X_{t},\theta_{t}) dt \notag \\
&=& d v_t - \mathcal{L}_{\theta } v(X_{t},\theta_{t}) dt - \nabla_{x} v(X_{t},\theta_{t}) dW_{t} -  \alpha_t \nabla_{\theta} v (X_{t},\theta_{t})  \nabla_{\theta} f(X_t, \theta_t) dW_{t}  \notag \\
&-& \alpha_t \nabla_{\theta x} v(X_t, \theta_t) \nabla_{\theta} f(X_t, \theta_t) dt.
\label{PoissonErgodic1}
\end{eqnarray}

Using this result, $\Gamma_t^2$ can be rewritten as:
\begin{eqnarray}
\Gamma_{t}^{2} &=&  \int_{1}^{t}\left[\alpha_{s}\Psi^{(p)}_{t,s}\norm{Y_{s}}^{p-2}\left<Y_{s}, \nabla_{\theta} \bar g(\theta_s) - \nabla_{\theta} g (X_s, \theta_s )\right>\right]ds \notag \\
&=&  \int_1^t \alpha_s \Psi^{(p)}_{t,s}\norm{Y_{s}}^{p-2}  d v_s - \int_1^t \alpha_s  \Psi^{(p)}_{t,s}\norm{Y_{s}}^{p-2}  \nabla_x v(X_s, \theta_s) d W_s \notag \\
&-&  \int_1^t \alpha_s \Psi^{(p)}_{t,s}\norm{Y_{s}}^{p-2}  \mathcal{L}_{\theta} v (X_s, \theta_s) ds -  \int_1^t \alpha_s^2  \Psi^{(p)}_{t,s}\norm{Y_{s}}^{p-2}  \nabla_{\theta} v (X_{s},\theta_{s})  \nabla_{\theta} f(X_s, \theta_s) dW_{s} \notag\\
& -&  \int_1^t \alpha_s^2 \Psi^{(p)}_{t,s}\norm{Y_{s}}^{p-2} \nabla_{\theta x} v(X_s, \theta_s) \nabla_{\theta} f(X_s, \theta_s) ds \notag \\
&=& \Gamma_t^{2,1} + \Gamma_t^{2,2}+ \Gamma_t^{2,3} + \Gamma_t^{2,4} + \Gamma_t^{2,5}.
\label{Eq:Gamma2}
\end{eqnarray}

Let's first rewrite  the first term $ \Gamma_t^{2,1}$.  We apply It\^{o}'s formula to $\alpha_s \Psi^{(p)}_{t,s} \norm{Y_{s}}^{p-2} v_s$:
\begin{align*}
\alpha_t \Psi^{(p)}_{t,t}  \norm{Y_{t}}^{p-2} v_t  &- \alpha_1 \Psi^{(p)}_{t,1}  \norm{Y_{1}}^{p-2} v_1  = \int_1^t \alpha_s  \Psi^{(p)}_{t,s} \norm{Y_{s}}^{p-2} d v_s - \int_1^t \frac{C_{\alpha}}{s^2} \Psi^{(p)}_{t,s} \norm{Y_{s}}^{p-2}  v_s ds \nonumber\\
& +  \int_1^t \alpha_s  \frac{\partial \Psi^{(p)}_{t,s}  }{\partial s} \norm{Y_{s}}^{p-2} v_s ds+ \int_1^t \alpha_s  \Psi^{(p)}_{t,s} v_{s}d\norm{Y_{s}}^{p-2}+\int_1^t \alpha_s  \Psi^{(p)}_{t,s} d\left[\norm{Y_{s}}^{p-2} , v_s\right].
\end{align*}

Then, we have the following representation for $\Gamma_t^{2,1}$:
\begin{align}
 \Gamma_t^{2,1} &=  \int_1^t \alpha_s  \Psi^{(p)}_{t,s} \norm{Y_{s}}^{p-2} d v_s  \notag \\
 &=   \alpha_t \Psi^{(p)}_{t,t} \norm{Y_{t}}^{p-2}  v_t  - \alpha_1 \Psi^{(p)}_{t,1} \norm{Y_{1}}^{p-2}  v_1  +  \int_1^t \frac{C_{\alpha}}{s^2} \Psi^{(p)}_{t,s} \norm{Y_{s}}^{p-2}  v_s ds  - \int_1^t \alpha_s  \frac{\partial \Psi^{(p)}_{t,s}  }{\partial s} \norm{Y_{s}}^{p-2}v_s ds\nonumber\\
 &- \int_1^t \alpha_s  \Psi^{(p)}_{t,s} v_{s}d\norm{Y_{s}}^{p-2}-\int_1^t \alpha_s  \Psi^{(p)}_{t,s} d\left[\norm{Y_{s}}^{p-2} , v_s\right]\nonumber\\
 &=   \alpha_t \Psi^{(p)}_{t,t} \norm{Y_{t}}^{p-2}  v_t  - \alpha_1 \Psi^{(p)}_{t,1} \norm{Y_{1}}^{p-2}  v_1  +  C_{\alpha}^{-1}\int_1^t \alpha_{s}^{2} \Psi^{(p)}_{t,s} \norm{Y_{s}}^{p-2}  v_s ds  - pC\int_1^t \alpha^{2}_s   \Psi^{(p)}_{t,s}  \norm{Y_{s}}^{p-2}v_s ds\nonumber\\
 &- \int_1^t \alpha_s  \Psi^{(p)}_{t,s} v_{s}d\norm{Y_{s}}^{p-2}-\int_1^t \alpha_s  \Psi^{(p)}_{t,s} d\left[\norm{Y_{s}}^{p-2} , v_s\right].\label{Eq:Gamma21term}
 \end{align}

Now, we are ready to put things together. Equation (\ref{Eq:Ynorm}) with $p-2$ in place of $p$ is then used to evaluate the second to last term of (\ref{Eq:Gamma21term}) and similarly for the quadratic covariation term $d\left[\norm{Y_{s}}^{p-2} , v_s\right]$ of the last term of (\ref{Eq:Gamma21term}). Plugging (\ref{Eq:Gamma21term}) in (\ref{Eq:Gamma2}) and that in (\ref{Eq:GammaInequality}), we get that there is an unimportant  constant $K<\infty$ large enough and a matrix-valued function $\zeta(x,\theta)$ that grows at most polynomially in $x$ and $\theta$ such that
\begin{align}
\Gamma_{t}&\leq \Psi^{(p)}_{t,1}\norm{Y_{1}}^{p}+\alpha_t \Psi^{(p)}_{t,t} \norm{Y_{t}}^{p-2}  v_t  - \alpha_1 \Psi^{(p)}_{t,1} \norm{Y_{1}}^{p-2}  v_1\nonumber\\
&+K\int_{1}^{t}\alpha_{s}^{2}\Psi^{(p)}_{t,s}\norm{Y_{s}}^{p-2}\zeta(X_{s},\theta_{s})ds+ \hat{M}_{t},
\label{Eq:GammaInequality2}
\end{align}
where $\hat{M}_{t}$ is a mean zero and square integrable (this follows from the uniform moment bounds on the $X$ and $\theta$ processes) Brownian stochastic intergal.

Recall now that we want to evaluate $\mathbb{E}\norm{Y_{t}}^{p}$. Recalling the definition of $\Gamma_{t}$ and taking expectation in (\ref{Eq:GammaInequality2}) we obtain
\begin{align*}
\mathbb{E}\norm{Y_{t}}^{p}&\leq \mathbb{E}\left[\Psi^{(p)}_{t,1}\norm{Y_{1}}^{p}+\alpha_t \norm{Y_{t}}^{p-2}  v_t  - \alpha_1 \Psi^{(p)}_{t,1} \norm{Y_{1}}^{p-2}  v_1\right]+  K\mathbb{E} \int_{1}^{t}\alpha_{s}^{2}\Psi^{(p)}_{t,s}\norm{Y_{s}}^{p-2}\zeta(X_{s},\theta_{s})ds.
\end{align*}

Recalling that $\Psi^{(p)}_{t,1}=t^{-pCC_{\alpha}}$ and that $CC_{a}>1$ we get that $\Psi^{(p)}_{t,1}\leq t^{-p}$. Hence we have obtained that for any $p\geq 2$ the following inequality holds
\begin{align}
\mathbb{E}\norm{Y_{t}}^{p}&\leq K t^{-p}+\mathbb{E}\left[\alpha_t \norm{Y_{t}}^{p-2}  v_t  \right]+  K\mathbb{E} \int_{1}^{t}\alpha_{s}^{2}\Psi^{(p)}_{t,s}\norm{Y_{s}}^{p-2}\zeta(X_{s},\theta_{s})ds.\label{Eq: ExpectedValueBound2}
\end{align}

The next step is to proceed by induction. Using the uniform moment bounds for $X$ and $\theta$ together with the polynomial growth of $v(x,\theta)$ and $\zeta(x,\theta)$, we get for $p=2$
\begin{align}
\mathbb{E}\norm{Y_{t}}^{2}&\leq K t^{-2}+\mathbb{E}\left[\alpha_t  v_t  \right]+  K\mathbb{E} \int_{1}^{t}\alpha_{s}^{2}\Psi^{(2)}_{t,s}\zeta(X_{s},\theta_{s})ds\nonumber\\
&\leq K \left[t^{-2} +t^{-1}+ t^{-2CC_{\alpha}}\int_{1}^{t} s^{2CC_{\alpha}-2}ds\right]\leq K t^{-1},\nonumber
\end{align}
where the unimportant constant $K$ may change from line to line. Hence the desired statement is true for $p=2$. Next let us assume that it is true for exponent $p-1$ and we want to prove that it is true for exponent $p$. Using H\"{o}lder inequality with exponents $r_{1},r_{2}>1$ such that $1/r_{1}+1/r_{2}=1$ and choosing $r_{1}=\frac{p-1}{p-2}>1$,
\begin{align*}
\mathbb{E}\left[\alpha_t \norm{Y_{t}}^{p-2}  v_t\right]&\leq \alpha_{t} \left(\mathbb{E} \norm{Y_{t}}^{p-1}\right)^{1/r_{1}}  \left(\mathbb{E} |v_{t}|^{r{_2}}\right)^{1/r_{2}}\nonumber\\
&\leq Kt^{-1}t^{-\frac{p-2}{2}}= K t^{-p/2},
\end{align*}
and likewise
\begin{align*}
\mathbb{E}\norm{\int_{1}^{t}\alpha_{s}^{2}\Psi^{(p)}_{t,s}\norm{Y_{s}}^{p-2}\zeta(X_{s},\theta_{s})ds}&\leq \int_{1}^{t}\alpha_{s}^{2}\Psi^{(p)}_{t,s}\left(\mathbb{E}\norm{Y_{s}}^{(p-2)r_{1}}\right)^{1/r_{1}}\left(\mathbb{E}\norm{\zeta(X_{s},\theta_{s})}^{r_{2}}\right)^{1/r_{2}}ds\nonumber\\
&\leq K\int_{1}^{t} \alpha_{s}^{2}\Psi^{(p)}_{t,s}\left(\mathbb{E}\norm{Y_{s}}^{(p-2)r_{1}}\right)^{1/r_{1}}ds\nonumber\\
&\leq K\int_{1}^{t} \alpha_{s}^{2}\Psi^{(p)}_{t,s}\frac{1}{s^{(p-2)/2}}ds\nonumber\\
&\leq Kt^{-pCC_{\alpha}}\int_{1}^{t} s^{pCC_{\alpha}-1-\frac{p}{2}}ds\nonumber\\
&\leq K\left(t^{-p/2}-t^{-pCC_{\alpha}}\right)\nonumber\\
&\leq Kt^{-p/2}.
\end{align*}

Putting the last two displays together, (\ref{Eq: ExpectedValueBound2}) gives
\begin{align*}
\mathbb{E}\norm{Y_{t}}^{p}&\leq K t^{-p/2},
\end{align*}
which is the statement of Theorem \ref{T:MainTheorem1} for integer $p\geq 2$. The statement for any $p\geq 1$ then follows from H\"{o}lder inequality.
 This concludes the proof of the theorem.

\section{Proof of Central Limit Theorem in strongly convex case} \label{CLTproof}
To prove the central limit theorem, we use a \emph{second-order} Taylor expansion:
\begin{eqnarray*}
\nabla_{\theta} \bar g (\theta_t) &=&  \nabla_{\theta} \bar g (\theta^{\ast} ) + \Delta \bar g (\theta^{\ast}) ( \theta_t - \theta^{\ast} ) + \frac{1}{2} \frac{\partial^3 \bar g}{\partial \theta^3}(\theta^1_t) ( \theta_t - \theta^{\ast} )  ( \theta_t - \theta^{\ast} )^{\top} ,
\end{eqnarray*}
where $\theta_t^1$  is an appropriate point chosen in the segment that connects $\theta_t$ to $\theta^{\ast}$. \textcolor{black}{The last term on the RHS of the above equation is a tensor-matrix product.} Then, the evolution of $\theta_t$ follows:
\begin{eqnarray*}
d  ( \theta_t  - \theta^{\ast} )&=& - \alpha_t \Delta \bar g( \theta^{\ast}) ( \theta_t  - \theta^{\ast})    dt - \frac{\alpha_t}{2} \frac{\partial^3 \bar g}{\partial \theta^3}  (\theta^1_t) ( \theta_t - \theta^{\ast} )  ( \theta_t - \theta^{\ast} )^{\top}  dt   + \alpha_t ( \nabla_{\theta} \bar g(\theta_t) - \nabla_{\theta} g (X_t, \theta_t ) ) dt \notag \\
& &+ \alpha_t \nabla_{\theta} f(X_t, \theta_t) d W_t.
\end{eqnarray*}

Let $Y_t = \theta_t - \theta^{\ast}$.  Then, $Y_t$ satisfies the SDE
\begin{eqnarray*}
d Y_t = - \alpha_t \Delta \bar g( \theta_t^\ast) Y_t   dt  - \frac{\alpha_t}{2} \frac{\partial^3 \bar g }{\partial \theta^3} (\theta^1_t) Y_t Y_t^{\top}   dt   + \alpha_t ( \nabla_{\theta} \bar g(\theta_t) - \nabla_{\theta} g (X_t, \theta_t ) ) dt  + \alpha_t \nabla_{\theta} f(X_t, \theta_t) d W_t.
\end{eqnarray*}

Let $\Phi^{*}_{t,s} \in \mathbb{R}^{k \times k}$ be the fundamental solution satisfying
\begin{eqnarray}
d \Phi^{*}_{t,s} &=& - \alpha_t \Delta \bar g( \theta^{\ast})  \Phi^{*}_{t,s} dt, \qquad
\Phi^{*}_{s,s} = I,
\label{FundamentalSolution2}
\end{eqnarray}
where $I$ is the identity matrix. As in Section \ref{L2proof} we set without loss of generality $C_{0}=0$ and assume that the initial time is at $t=1$. Then, $Y_t$ can be written in terms of $\Phi^{*}_{t,s}$:
\begin{eqnarray}
Y_t &=&  \Phi^{*}_{t,1} Y_1    - \int_1^t \Phi^{*}_{t,s}  \frac{\alpha_s}{2} \frac{\partial^3 \bar g}{\partial \theta^3} (\theta^1_s) Y_s Y_s^{\top} ds  +   \int_1^t \Phi^{*}_{t,s} \alpha_s ( \nabla_{\theta} \bar g(\theta_s) - \nabla_{\theta} g (X_s, \theta_s ) ) ds + \int_1^t   \Phi^{*}_{t,s}   \alpha_s \nabla_{\theta} f(X_s, \theta_s) d W_s \notag \\
&=& \Gamma^1_t + \Gamma_t^2 + \Gamma_t^3 + \Gamma_t^4.
\label{DuhamelTwo}
\end{eqnarray}

Next, a convergence rate for $\Phi^{*}_{t,s}$ must be established in the matrix norm $\norm{A} = \sqrt{ \sum_{i,j} A_{i,j}^2 }$.   Consider the change of variables $\tau(t) = s + \int_s^t \alpha_u du$ and define $\tilde \Phi^{*}_{\tau, s}$ via $\tilde \Phi^{*}_{\tau, s} = \Phi^{*}_{t, s}$.  For $\alpha_t = \frac{C_{\alpha}}{t}$, we have $\tau(t)=s+ \textcolor{black}{ C_{\alpha} ( \log(t) - \log(s) ) }$.  Performing this change of variables,
\begin{eqnarray}
d \tilde \Phi^{*}_{\tau,s} &=& - \Delta \bar g(\theta^{\ast}) \tilde \Phi^{*}_{\tau,s} d \tau,  \qquad
\tilde \Phi^{*}_{s,s} = I,\label{Eq:TransformedPhiMatrix}
\end{eqnarray}
for $\tau \geq s$. 
Define $\tilde \Phi^{*,j}_{\tau,s}$ as the $j$-th column of $\tilde \Phi^{*}_{\tau,s}$ and $\Delta \bar g( \theta^{\ast} )^i$ as the $i$-th row of the matrix $ \Delta \bar g( \theta^{\ast} ) $.  Consider the one-dimensional differential equation:
\begin{eqnarray*}
\frac{d }{ d \tau} \norm{ \tilde \Phi^{*,j}_{\tau,s} }^2 &=& \frac{d }{ d \tau} \bigg{(}  \big{(} \tilde \Phi^{*,j,1}_{\tau,s}  \big{)}^2 + \cdots + \big{(} \tilde \Phi^{*,j,k}_{\tau,s} \big{)}^2 \bigg{)}  \notag \\
&=& -2  \bigg{(} \tilde \Phi^{*,j,1}_{\tau,s}   \Delta \bar g( \theta^{\ast} )^1 \tilde \Phi^{*,j}_{\tau,s}  + \cdots +  \tilde \Phi^{*,j,k}_{\tau,s}  \Delta \bar g( \theta^{\ast} )^k \tilde \Phi^{*,j}_{\tau,s} \bigg{)} \notag \\
&=& -2  \bigg{(} \tilde \Phi^{*,j}_{\tau,s } \bigg{)} ^{\top}  \Delta \bar g( \theta^{\ast} ) \tilde \Phi^{*,j}_{\tau,s} \textcolor{black}{\leq} -2 C \bigg{(} \tilde \Phi^{*,j}_{\tau,s} \bigg{)}^{\top}    \tilde \Phi^{*,j}_{\tau,s} = -2 C \norm{ \tilde \Phi^{*,j}_{\tau,s} }^2,
\end{eqnarray*}
where we have used the strong convexity assumption.  This of course yields the convergence rate:
\begin{eqnarray*}
\norm{ \tilde \Phi^{*}_{\tau,s} }^2  \leq   K  e^{-2 C (\tau-s) }.
\end{eqnarray*}

Changing variables again yields the convergence rate in the original time coordinate $t$:
\begin{eqnarray}
\norm{  \Phi^{*}_{t,s} }^2  &=& \norm{ \tilde \Phi^{*}_{\tau,s} }^2  \leq  K e ^{-2 C ( \tau-s ) }  \leq K e^{-2 C   \int_s^t \textcolor{black}{ \alpha_u} du }    =K t^{-2 C  C_{\alpha} } s^{ 2 C C_{\alpha } }.
\label{PhiBoundTwo}
\end{eqnarray}

We recall some important properties of $\Phi_{t,s}^{\ast}$ (for reference, see Proposition 2.14 in \cite{Chicone}).  $\Phi_{t,s}^{\ast}$ is  differentiable in $t$ with the semi-group property $\Phi_{t,1}^{\ast} = \Phi_{t,s}^{\ast} \Phi_{s,1}^{\ast}$.  Furthermore, $\Phi_{t,s}^{\ast}$ is invertible with inverse $\Phi_{t,s}^{\ast, -1} = \Phi_{s,t}^{\ast}$.  Therefore,
\begin{eqnarray}
\frac{\partial}{\partial s} \Phi_{t,s}^{\ast} &=&  \frac{\partial}{\partial s} \bigg{(}  \Phi_{t,1}^{\ast}  \Phi_{s,1}^{\ast, -1}   \bigg{)}\notag\\
 &= &\Phi_{t,1}^{\ast} \frac{\partial}{\partial s}  \Phi_{s,1}^{\ast, -1} = \Phi_{t,1}^{\ast}   \Phi_{s,1}^{\ast, -1}   \frac{\partial \Phi_{s,1}^{\ast} }{\partial s}    \Phi_{s,1}^{\ast, -1}   \notag \\
&=& - \Phi_{t,s}^{\ast}  \bigg{(}  \alpha_s \Delta \bar g( \theta^{\ast})  \Phi_{s,1}^{\ast}  \bigg{)}    \Phi_{s,1}^{\ast, -1}\notag\\
& =&- \alpha_s  \Phi_{t,s}^{\ast}  \Delta \bar g( \theta^{\ast} ).
\label{MatrixCalculation1TimeDeriv}
\end{eqnarray}

Combining the calculation (\ref{MatrixCalculation1TimeDeriv}) with the bound (\ref{PhiBoundTwo}) yields:
\begin{eqnarray*}
\norm{\frac{\partial}{\partial s} \Phi_{t,s}^{\ast} }^2 \leq K \textcolor{black}{\alpha_s^2} t^{-2 C  C_{\alpha} } s^{ 2 C C_{\alpha } }.
\end{eqnarray*}


Now, let's examine the process $\sqrt{t} Y_t $, which represents the fluctuations of $\theta_t$ around the global minimum $\theta^{\ast}$.  First, observe that $\sqrt{t} \Gamma^1_t \overset{a.s.} \rightarrow 0$ due to the bound (\ref{PhiBoundTwo}).  Let's now examine $\sqrt{t} \Gamma_t^2$.  Recall that $\mathbb{E}[ \norm{Y_t }^p] \leq \frac{K}{t^{p/2}}$ from the L$^p$ convergence rate in the previous section. Applying this result with $p=2,3$ we obtain
\begin{eqnarray}
\mathbb{E} \bigg{[} \norm{ \sqrt{t} \Gamma_t^2  }_1 \bigg{]} &\leq& \mathbb{E} \bigg{[} \sqrt{t} \int_1^t \norm{ \Phi^{*}_{t,s} \alpha_s \frac{\partial^3 \bar g}{\partial \theta^3}  (\theta^1_s) Y_s Y_s^{\top} }_1 ds \bigg{]} \notag \\
&\leq& K \sqrt{t}  \int_1^t \norm{ \Phi^{*}_{t,s}  } \left(\frac{1}{s^2}+\frac{1}{s^{5/2}}\right) ds  \notag \\
&\leq& K \sqrt{t} t^{-C C_{\alpha} } \int_1^t s^{C C_\alpha-2}  ds \leq K\sqrt{t} t^{-1} = K t^{-1/2} .
\end{eqnarray}
Therefore, $ \sqrt{t} \Gamma_t^2 \overset{p} \rightarrow 0$. \textcolor{black}{(Note that $\frac{\partial^3 \bar g}{\partial \theta^3}  (\theta^1_s) Y_s Y_s^{\top}$ is a tensor-matrix product.)}

Next, we address the term $\sqrt{t} \Gamma_t^3$.  We construct the corresponding Poisson equation and use its solution to analyze $\Gamma_t^3$. Define $G(x, \theta) =  \nabla_{\theta} \bar g(\theta) - \nabla_{\theta} g (x, \theta ) $ and let $v(x, \theta)$ be the solution to the PDE $\mathcal{L}_x v(x, \theta) = G(x, \theta)$. Proceeding in a fashion similar to equation (\ref{PoissonErgodic1}) (but with different function $v$ now) we write $\sqrt{t} \Gamma_t^3$:
\begin{eqnarray}
\sqrt{t} \Gamma_t^3 &=& \sqrt{t}  \int_1^t \alpha_s \Phi^{*}_{t,s}  ( \nabla_{\theta} \bar g(\theta_s) - \nabla_{\theta} g (X_s, \theta_s ) ) ds \notag \\
&=& \sqrt{t} \int_1^t \alpha_s  \Phi^{*}_{t,s}  d v_s - \sqrt{t} \int_1^t \alpha_s  \Phi^{*}_{t,s}  \nabla_x v(X_s, \theta_s) d W_s \notag \\
& &-  \sqrt{t}  \int_1^t \alpha_s  \Phi^{*}_{t,s}  \mathcal{L}_{\theta} v (X_s, \theta_s) ds - \sqrt{t} \int_1^t \alpha_s^2  \Phi^{*}_{t,s}  \nabla_{\theta} v (X_{s},\theta_{s})  \nabla_{\theta} f(X_s, \theta_s) dW_{s} \notag \\
& &- \sqrt{t}  \int_1^t \alpha_s^2 \Phi^{*}_{t,s} \nabla_{\theta x} v(X_s, \theta_s) \nabla_{\theta} f(X_s, \theta_s) ds \notag \\
&=& \sqrt{t} \Gamma_t^{3,1} +  \sqrt{t} \Gamma_t^{3,2}+ \sqrt{t} \Gamma_t^{3,3} +  \sqrt{t} \Gamma_t^{3,4} + \sqrt{t} \Gamma_t^{3,5},
\label{Gamma3n}
\end{eqnarray}
where $v(x, \theta)$ satisfies  bounds similar to the ones  in (\ref{BoundL2PDE}).  Following similar steps as in Section \ref{L2proof},
\begin{eqnarray*}
\sqrt{t} \bigg{(} | \Gamma_t^{3,1}| + | \Gamma_t^{3,3}| + | \Gamma_t^{3,4}| + | \Gamma_t^{3,5} | \bigg{)} \overset{L^1} \rightarrow 0,
\end{eqnarray*}
which of course implies that $\sqrt{t} ( \Gamma_t^{3,1} + \Gamma_t^{3,3} + \Gamma_t^{3,4} + \Gamma_t^{3,5} ) \overset{p} \rightarrow 0$.

The term $\sqrt{t} \Gamma_t^{3,2} + \sqrt{t} \Gamma_t^{4}$ is now analyzed.  This term will produce the limiting Gaussian random variable. Recalling the reduction to $\sigma=I$, we have
\begin{eqnarray}
\sqrt{t} \Gamma_t^{3,2} + \sqrt{t} \Gamma_t^{4} = \sqrt{t} \int_1^t \alpha_s  \Phi^{*}_{t,s} \bigg{(}  \nabla_{\theta} f(X_s, \theta_s) -  \nabla_x v(X_s, \theta_s) \bigg{)} d W_s .
\label{NotAMartingale}
\end{eqnarray}

Let $h(x, \theta) =  \bigg{(}  \nabla_{\theta} f(x,\theta) -  \nabla_x v(x,\theta) \bigg{)} \bigg{(}  \nabla_{\theta} f(x,\theta) -  \nabla_x v(x,\theta) \bigg{)}^{\top}$ and $\bar h(\theta) = \int h(x, \theta) \pi(dx)$.  Let $H(x, \theta) = h(x, \theta) - \bar h(\theta)$.  Recall that $\norm{ \nabla_{\theta} f(x, \theta) } \leq K (1 + \norm{x}^q +\norm{\theta})$, $\norm{ \nabla_x v(x, \theta )  } \leq K (1 + \norm{ x }^q +\norm{\theta}^{3})$, $\norm{ \frac{\partial^2 v}{\partial x \partial \theta} (x, \theta )  } \leq K (1 + \norm{ x}^q +\norm{\theta}^{2})$, and $\norm{ \frac{\partial^3 v}{\partial x \partial \theta^2} (x, \theta )  } \leq K (1 + \norm{ x}^q +\norm{\theta} )$.  The latter bounds on $v(x,\theta)$ and its derivatives are from equation (\ref{BoundL2PDE}).  These bounds imply that the function $H(x,\theta)$ and its derivatives have polynomial growth in $\norm{x}$ and $\norm{\theta}$.
Based on Theorem \ref{T:RegularityPoisson}, the solution $w(x, \theta)$ to the PDE $\mathcal{L}_{x} w(x, \theta) = H(x, \theta)$ and its derivatives will also have at most polynomial growth in $\norm{x}$ and $\norm{\theta}$.

We will prove that $\sqrt{t} \Gamma_t^{3,2} + \sqrt{t} \Gamma_t^{4} \overset{d} \rightarrow \mathcal{N}(0,\bar \Sigma )$ for the appropriate limiting variance-covariance matrix $\bar\Sigma$.  The proof will rely upon the Poisson partial differential equation approach using the at most polynomial growth  of its solution and its derivatives together with the uniform boundedness of the moments of $X_{t}$ and $\theta_{t}$ processes to analyze the rate of convergence.

The quadratic covariation matrix of $\sqrt{t} \Gamma_t^{3,2} + \sqrt{t} \Gamma_t^{4}$ is:
\begin{eqnarray*}
\Sigma_t = t \int_1^t \alpha_s^2 \Phi^{*}_{t,s} h(X_s, \theta_s)      \Phi^{*,\top}_{t,s} ds.
\end{eqnarray*}

It is necessary to show that $\Sigma_t \overset{p} \rightarrow \bar \Sigma$ as $t \rightarrow \infty$.  To begin, we show a simpler limit.  Consider the process:
\begin{eqnarray*}
\bar \Sigma_t = t \int_1^t \alpha_s^2 \Phi^{*}_{t,s} \bar h(\theta^{\ast})      \Phi^{*,\top}_{t,s} ds.
\end{eqnarray*}

It will be proven now that $\bar \Sigma_t$ converges to a limit $\bar \Sigma$ as $t \rightarrow \infty$.  Let us now establish the limit $\bar \Sigma$ as $t \rightarrow \infty$. Recall that $\Delta\bar{g}(\theta^{\ast})$ is both strictly positive matrix and symmetric. Therefore, by the eigenvalue decomposition we can write that
\[
\Delta\bar{g}(\theta^{\ast})=U\Lambda U^{\top},
\]
where $U=[u_{1},\cdots,u_{k}]$ is the corresponding matrix of orthogonal eigenvectors $u_1, \ldots, u_k \in \mathbb{R}^k$ and $\Lambda=\text{diag}(\lambda_{1},\cdots,\lambda_{k})$ is the diagonal matrix with the positive eigenvalues $\lambda_{i}>0$ for $i=1,\cdots,k$. Next, we notice that a time transformation as in (\ref{Eq:TransformedPhiMatrix}) allows us to write that for $\tau(t)=s+\int_{s}^{t}\alpha_{u}du$ and $\tilde\Phi^{*}_{\tau(t),s}=\Phi^{*}_{t,s}$
\[
\tilde\Phi^{*}_{\tau,s}=e^{-\Delta\bar{g}(\theta^{\ast})(\tau-s) }.
\]

Combining the latter results we then obtain that
\[
\Phi^{*}_{t,s}=U e^{-\Lambda \int_{s}^{t}\alpha_{u}du }U^{\top}.
\]

For notational convenience, let us now set
\[
\Lambda_{t,s}=e^{-\int_{s}^{t}\alpha_{u}du \Lambda}=\text{diag}\left(e^{-\lambda_{1} \int_{s}^{t}\alpha_{u}du },\cdots, e^{-\lambda_{k} \int_{s}^{t}\alpha_{u}du }\right)=
\text{diag}\left(\left(\frac{s}{t}\right)^{\lambda_{1}C_{\alpha}},\cdots, \left(\frac{s}{t}\right)^{\lambda_{k}C_{\alpha}}\right).
\]

Let us write $\lambda_{(t,s),i}=\left(\frac{s}{t}\right)^{\lambda_{i}C_{\alpha}}$ for $i=1,\cdots,k$. The $(i,j)$-th element of the matrix $\Phi^{*}_{t,s}$ takes the form
\[
\Phi^{*}_{t,s,i,j}=\sum_{m=1}^{k}\lambda_{(t,s),m}u_{i,m}u_{j,m}.
\]

Naturally $(\Phi^{*,\top})_{t,s,k',j}=\Phi^{*}_{t,s,j,k'}=\sum_{m'=1}^{k}\lambda_{(t,s),m'}u_{j,m'}u_{k',m'}$. Hence, the $(i,j)$-th element of the  matrix $\bar \Sigma_t$ takes the form
\begin{align*}
\bar \Sigma_{t,i,j} &= t\int_{1}^{t}\alpha_{s}^{2} \left[\sum_{n} \Phi^{*}_{t,s,i,n}  \sum_{k'=1}^k \bar h(\theta^{\ast})_{n,  k'} (\Phi^{*,\top})_{t,s,k',j}\right]ds\nonumber\\
 &= t\int_{1}^{t}\alpha_{s}^{2} \left[\sum_{n} \left(\sum_{m}\lambda_{(t,s),m}u_{i,m}u_{n,m}\right)  \sum_{k'} \bar h(\theta^{\ast})_{n,  k'} \left(\sum_{m'}\lambda_{(t,s),m'}u_{j,m'}u_{k',m'}\right)\right]ds\nonumber\\
 &=  \sum_{n}\sum_{m} \sum_{k'} \sum_{m'} t\int_{1}^{t}\alpha_{s}^{2}  \left[\left(\lambda_{(t,s),m}u_{i,m}u_{n,m}\right)   \bar h(\theta^{\ast})_{n,  k'} \left(\lambda_{(t,s),m'}u_{j,m'}u_{k',m'}\right)\right]ds\nonumber\\
 &=  \sum_{n}\sum_{m} \sum_{k'} \sum_{m'} \left[ t\int_{1}^{t}\alpha_{s}^{2}  \left(\frac{s}{t}\right)^{\left(\lambda_{m}+\lambda_{m'}\right)C_{\alpha}} ds\right]   u_{i,m}u_{n,m}\bar h(\theta^{\ast})_{n,  k'} u_{j,m'}u_{k',m'}  \nonumber\\
 &=  \sum_{n}\sum_{m} \sum_{k'} \sum_{m'} \left[ \frac{C_{\alpha}^{2}}{\left(\lambda_{m}+\lambda_{m'}\right)C_{\alpha}-1}\left(1-t^{1-\left(\lambda_{m}+\lambda_{m'}\right)C_{\alpha}}\right)\right]   u_{i,m}u_{n,m}\bar h(\theta^{\ast})_{n,  k'} u_{j,m'}u_{k',m'}.  \nonumber
\end{align*}

%

By the Rayleigh-Ritz theorem, $\lambda_1, \ldots, \lambda_k \geq C$.  Therefore, $C_{\alpha} ( \lambda_m + \lambda_{k'} ) \geq 2 C_{\alpha} C > 1$.  Then, we get that the $(i,j)$-th element of the limiting quadratic covariation matrix $\bar \Sigma$ takes the form
\begin{align}
\bar \Sigma_{i,j} &= \lim_{t \rightarrow \infty} \bar \Sigma_{t,i,j} = \sum_{n}\sum_{m} \sum_{k'} \sum_{m'} \left[ \frac{C_{\alpha}^{2}}{\left(\lambda_{m}+\lambda_{m'}\right)C_{\alpha}-1}\right]   u_{i,m}u_{n,m}\bar h(\theta^{\ast})_{n,  k'} u_{j,m'}u_{k',m'}\nonumber\\
&= \sum_{n}\sum_{m} u_{i,m}u_{n,m}\sum_{k'} \bar h(\theta^{\ast})_{n,  k'} \sum_{m'} \left[ \frac{C_{\alpha}^{2}}{\left(\lambda_{m}+\lambda_{m'}\right)C_{\alpha}-1}\right] u_{j,m'}u_{k',m'}.\label{Eq:LimitingSigma}
\end{align}

Notice that we can conveniently write that
\begin{align}
\bar{\Sigma}=C_{\alpha}^{2}\int_{0}^{\infty}e^{-s\left(C_{\alpha}\Delta\bar{g}(\theta^{\ast})-I\right)}\bar{h}(\theta^{\ast})e^{-s\left(C_{\alpha}\left(\Delta\bar{g}\right)^{\top}(\theta^{\ast})-I\right)}ds.\label{Eq:LimitingSigma2}
\end{align}

It remains to show that $\mathbb{E} \norm{\Sigma_t - \bar \Sigma_t }_1  \rightarrow 0$ as $t \rightarrow \infty$.  If this is true, then the triangle inequality yields
\begin{eqnarray*}
\mathbb{E} \norm{ \Sigma_t - \bar \Sigma }_1 \leq \mathbb{E} \norm{ \Sigma_t - \bar \Sigma_t }_1+ \mathbb{E} \norm{ \bar \Sigma_t - \bar \Sigma }_1 \rightarrow 0.
\end{eqnarray*}
Then, $ \mathbb{E} \norm{ \Sigma_t - \bar \Sigma }_1 \rightarrow 0$ would imply that $\Sigma_t \overset{p} \rightarrow \bar \Sigma$.

To prove that $\mathbb{E} \norm{\Sigma_t - \bar \Sigma_t }_1  \rightarrow 0$, we begin by defining
\begin{eqnarray*}
\bar V_t = t \int_1^t \alpha_s^2 \Phi^{*}_{t,s} \bar h(\theta_s)      \Phi^{*,\top}_{t,s} ds.
\end{eqnarray*}

By the triangle inequality, $\norm{ \Sigma_t - \bar \Sigma_t }_1 \leq \norm{  \Sigma_t - \bar V_t }_1 + \norm{ \bar V_t - \bar \Sigma_t }_1 $.  We first address the second term:
\begin{eqnarray}
 \norm{ \bar V_t - \bar \Sigma_t }_1 &=& \norm{ t \int_1^t \alpha_s^2 \big{(} \Phi^{*}_{t,s} \bar h(\theta_s) \Phi^{*,\top}_{t,s} -  \Phi^{*}_{t,s} \bar h(\theta^{\ast}) \Phi^{*,\top}_{t,s} \big{)} ds}_1.
 \label{TriangleCLT}
  \end{eqnarray}

  The $(i,j)$-th element of the matrix $\bar V_t - \bar \Sigma_t$ is:
  \begin{eqnarray*}
 \big{(}  \bar V_t - \bar \Sigma_t  \big{)}_{i,j} &=&  t \int_1^t \alpha_s^2   \bigg{(} \sum_{n} \Phi^{*}_{t,s,i,n}  \sum_{k'=1}^k \bar h(\theta_s)_{n,  k'} \Phi^{*,\top}_{t,s,k',j} -  \sum_{n} \Phi^{*}_{t,s,i,n}  \sum_{k'=1}^k \bar h(\theta^{\ast})_{n,  k'} \Phi^{*,\top}_{t,s,k',j} \bigg{)} ds   \notag \\
 &=&  t \int_1^t \alpha_s^2   \sum_{n} \Phi^{*}_{t,s,i,n}  \sum_{k'=1}^k \nabla_{\theta} \bar h(\theta_s^1)_{n,  k'}^{\top} ( \theta_s - \theta^{\ast}) \Phi^{*,\top}_{t,s,k',j} ds,
  \end{eqnarray*}
  where $\theta_s^1$ is an appropriately chosen point in the segment connecting  $\theta_s$ and $\theta^{\ast}$.  Recall now that $v$ and its derivatives can grow at most polynomially in $\norm{x}$ and $\norm{\theta}$. 
  As a result of the specific growth rates we have
\begin{eqnarray*}
\norm{ \nabla_{\theta} \bar{h}(\theta) }  \leq K (1 + \norm{ \theta }^{6} ) .
\end{eqnarray*}
In addition, $\mathbb{E}[  \norm{\theta_s - \theta^{\ast} }^2 ] \leq \frac{K}{s}$ from the convergence rate in Section \ref{L2proof}.  Using these facts, the uniform in time moments bound on $\theta_t$, and the Cauchy Schwartz inequality,
\begin{eqnarray*}
\mathbb{E}[  | \big{(}  \bar V_t - \bar \Sigma_t  \big{)}_{i,j} | ] & \leq & t \int_1^t \alpha_s^2 \mathbb{E} \bigg{|} \sum_{n} \Phi^{*}_{t,s,i,n}  \sum_{k'=1}^k \nabla_{\theta} \bar h(\theta_s^1)_{n,  k'}^{\top} ( \theta_s - \theta^{\ast}) \Phi^{*,\top}_{t,s,k',j} \bigg{|} ds \notag \\
&\leq& K t \int_1^t \alpha_s^2   \sum_{n}  \norm{ \Phi^{*}_{t,s,i,n} } \sum_{k'=1}^k \mathbb{E} \bigg{[}  \norm{ \nabla_{\theta} \bar h(\theta_s^1)_{n,  k'} }^2  \bigg{]}^{\frac{1}{2}} \mathbb{E}\bigg{[} \norm{ \theta_s - \theta^{\ast}}^2  \bigg{]}^{\frac{1}{2}} \norm{\Phi^{*,\top}_{t,s,k',j} } ds \notag \\
&\leq& K t \int_1^t s^{- \frac{5}{2}}  \norm{ \Phi^{*}_{t,s} }^2 ds.
\end{eqnarray*}
Therefore, $ \norm{ \bar V_t - \bar \Sigma_t }_1 \rightarrow 0$ as $t \rightarrow \infty$.

Now, let's address $ \norm{  \Sigma_t - \bar V_t }_1$ using the Poisson equation method.
\begin{eqnarray*}
\Sigma_t - \bar V_t &=& t \int_1^t \alpha_s^2 \bigg{(} \Phi^{*}_{t,s} h(\theta_s, X_s)    \Phi^{*,\top}_{t,s}  - \Phi^{*}_{t,s} \bar h(\theta_s)    \Phi^{*,\top}_{t,s}   \bigg{)} ds. \notag
\end{eqnarray*}

 The $(i,j)$-th element of the matrix $\Sigma_t - \bar V_t$ is:
  \begin{eqnarray*}
 \big{(}  \bar V_t - \bar \Sigma_t  \big{)}_{i,j} &=&  t \int_1^t \alpha_s^2   \sum_{n} \Phi^{*}_{t,s,i,n}  \sum_{k'=1}^k \bigg{(} h(\theta_s, X_s)_{n,  k'} -  \bar h(\theta_s)_{n,  k'} \bigg{)}\Phi^{*,\top}_{t,s,k',j}ds.   \notag
 \end{eqnarray*}

Using It\^{o}'s formula, the bounds (\ref{RegularityBounds1}) on the Poisson equation $\mathcal{L}_{x} w = H$, the moment bounds on $X_t$ and $\theta_t$, and the  It\^{o} Isometry, it can be shown that
\begin{eqnarray*}
 \mathbb{E} \big{|}  \big{(}  \bar V_t - \bar \Sigma_t  \big{)}_{i,j}  \big{|} \rightarrow 0,
\end{eqnarray*}
as $t \rightarrow \infty$.  This of course implies that $\mathbb{E} \norm{ \bar V_t - \bar \Sigma_t }_1  \rightarrow 0$.  Combining results and using the triangle inequality, we have the desired result $\Sigma_t  \overset{p} \rightarrow  \bar \Sigma$ as $t \rightarrow \infty$.

The convergence in probability of the quadratic variation $\Sigma_t$ for equation (\ref{NotAMartingale}) implies that (\ref{NotAMartingale}) converges in distribution to a mean zero normal random variable with covariance $\bar \Sigma$ (see Section 1.2.2  in \cite{Kutoyants}).  Combining all of the results yields the central limit theorem:
\begin{eqnarray*}
\sqrt{t} Y_t \overset{d} \rightarrow \mathcal{N}(0,\bar \Sigma ).
\end{eqnarray*}

\section{Proof of Central Limit Theorem in the non-convex case} \label{QuasiConvexproof}



From Theorem \ref{T:AlternateMainTheoremSGD1}, we know that $ \norm{ \nabla \bar g (\theta_t) } \rightarrow 0$ almost surely.  Under the imposed conditions this implies that either $\norm{ \theta_t - \theta^{\ast} } \rightarrow 0$ or $\norm{ \theta_t } \rightarrow \infty$.  We must therefore first show that $\norm{ \theta_t }$ remains finite almost surely.

The parameter $\theta_t$ evolves as:
\begin{eqnarray}
\theta_t =  \theta_{1}+\int_1^t -\alpha_s  \nabla_{\theta} \bar g ( \theta_s ) ds +  \int_1^t \alpha_s  \big{[} \nabla_{\theta} \bar g ( \theta_s ) - \nabla_{\theta} g ( \theta_s, X_s) \big{]} ds +  \int_{1}^{t}\alpha_s  \nabla_{\theta} f(X_s, \theta_s) d W_s.
\label{thetaQCcase}
\end{eqnarray}

Recall from Condition \ref{A:Assumption2} that $\nabla_{\theta_i} \bar g(\theta) > 0$ if $\theta_i - \theta^{\ast}_i > R_0$ and $\nabla_{\theta_i} \bar g(\theta) < 0$ if $\theta_i - \theta^{\ast}_i < -R_0$ for $i =1, 2, \ldots, k$.  If $|\theta_{t,i} (\omega) | \rightarrow \infty$, then either $\theta_{t,i}(\omega) \rightarrow + \infty$ or $\theta_{t,i}(\omega) \rightarrow - \infty$ since $\theta_t$ has continuous paths (i.e., a divergent sequence such as $-2^n$ cannot occur).  Next, note that the second and third integrals in equation (\ref{thetaQCcase}) converge to finite random variables almost surely.  Suppose that $\theta_{t,i}(\omega) \rightarrow + \infty$.  This implies that there exists a $T(\omega)$ such that $\theta_{t,i} > \theta^{\ast}_i$ for all $t \geq T(\omega)$.  However, $\int_{T(\omega)}^t -\alpha_s  \nabla_{\theta_i} \bar g ( \theta_s ) ds < 0$.  This, combined with the fact that the second and third terms in (\ref{thetaQCcase}) converge to finite random variables almost surely, proves that $\theta_{t,i} < + \infty$ with probability one.  A similar argument shows that $\theta_{t,i} > - \infty$ with probability one.  Therefore, $| \theta_{t,i} - \theta^{\ast}_i | \rightarrow 0$ almost surely.

$\theta_t \overset{a.s.} \rightarrow \theta^{\ast}$ implies that, for any $\delta > 0$ sufficiently small, there exists a (random) time $\tau_{\delta}$ such that $\norm{ \theta_t - \theta^{\ast} } < \delta $ for all $t \geq \tau_{\delta}$. Notice that for fixed $\delta>0$, $\tau_{\delta}$ is almost surely finite, but we also remark that $\tau_{\delta}$ is not a stopping time. In this section, the parameter $\delta$ is chosen sufficiently small such that $0<\delta<\delta^{*}$, where $\delta^{*}$ is taken from Condition \ref{A:Assumption2}. Using a Taylor expansion:
%
\begin{eqnarray*}
\nabla_{\theta} \bar g (\theta_t) &=&  \nabla_{\theta} \bar g (\theta^{\ast} ) + \Delta \bar g (\theta_t^1) ( \theta_t - \theta^{\ast} )= \Delta \bar g (\theta_t^1) ( \theta_t - \theta^{\ast} ),
\end{eqnarray*}
where $\theta_t^1$ is appropriately chosen in the segment connecting  $\theta_t$ to $\theta^{\ast}$. Then we have
\begin{eqnarray*}
d  ( \theta_t  - \theta^{\ast} )&=& - \alpha_t \Delta \bar g( \theta_t^1) ( \theta_t  - \theta^{\ast})    dt   + \alpha_t ( \nabla_{\theta} \bar g(\theta_t) - \nabla_{\theta} g (X_t, \theta_t ) ) dt + \alpha_t \nabla_{\theta} f(X_t, \theta_t) d W_t.
\end{eqnarray*}

Let $\Phi_{t,s}$ satisfy
\begin{eqnarray}
d \Phi_{t,s} &=& - \alpha_t \Delta \bar g( \theta_t^1) \Phi_{t,s} dt, \qquad
\Phi_{s,s} = I,\notag
\end{eqnarray}
and notice that for $t > s > \tau_{\delta}$,
\begin{eqnarray*}
\norm{  \Phi_{t,s} }^2  \leq K t^{-2 C  C_{\alpha} } s^{ 2 C C_{\alpha } }.
\end{eqnarray*}
We will also later make use of the fact that $\Phi_{t,s}$ satisfies the semi-group property $\Phi_{t,s}= \Phi_{t,\tau} \Phi_{\tau,s}$ (for reference, see Proposition 2.14 in \cite{Chicone}). Letting $Y_t = \theta_t - \theta^{\ast}$, we obtain
\begin{eqnarray}
Y_t &=&  \Phi_{t,1} Y_1    + \int_1^t \alpha_s \Phi_{t,s}  ( \nabla_{\theta} \bar g(\theta_s) - \nabla_{\theta} g (X_s, \theta_s ) ) ds + \int_1^t   \alpha_s \Phi_{t,s}    \nabla_{\theta} f(X_s, \theta_s) d W_s \notag \\
&=& \Gamma_t^1 + \Gamma_t^2 + \Gamma_t^3.\notag
\end{eqnarray}

The first term $\Gamma_t^1$ is analyzed below.
\begin{lemma}
\begin{eqnarray}
\sqrt{t}  \Gamma_t^1 \overset{a.s.} \rightarrow 0,\notag
\end{eqnarray}
as $t \rightarrow \infty$.
\end{lemma}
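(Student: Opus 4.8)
The plan is to use the semigroup property $\Phi_{t,s}=\Phi_{t,\tau}\Phi_{\tau,s}$ recorded just above the lemma in order to reduce matters to the regime $t\ge s\ge\tau_{\delta}$, which is the only regime in which the decay estimate $\norm{\Phi_{t,s}}^{2}\le K t^{-2CC_{\alpha}}s^{2CC_{\alpha}}$ is available. That estimate rests on strong convexity of $\bar g$ in $\norm{\theta-\theta^{\ast}}<\delta^{\ast}$ (Condition \ref{A:Assumption2}): once $s\ge\tau_{\delta}$, the intermediate point $\theta_s^{1}$ on the segment joining $\theta_s$ and $\theta^{\ast}$ also lies in that region, so $z^{\top}\Delta\bar g(\theta_s^{1})z\ge Cz^{\top}z$ for every $z$, and the column-norm Gronwall argument of Section \ref{CLTproof} applies verbatim. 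On the random interval $[1,\tau_{\delta}]$ there is no such convexity available, so a different device is needed there.

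First I would note that $\Phi_{t,s}$ is in any case well defined for all $t\ge s\ge 1$: writing $\nabla_{\theta}\bar g(\theta_t)-\nabla_{\theta}\bar g(\theta^{\ast})=A_t(\theta_t-\theta^{\ast})$ with $A_t=\int_{0}^{1}\Delta\bar g\big(\theta^{\ast}+r(\theta_t-\theta^{\ast})\big)\,dr$, the matrix $A_t$ is continuous in $t$ because $\theta_t$ has continuous paths and $\Delta\bar g$ is continuous, so $\Phi_{t,s}$ solves a linear ODE whose coefficients are bounded on every compact time interval (recall $\alpha_t=C_{\alpha}/t$ with $t\ge 1$). Since $\tau_{\delta}$ is a.s.\ finite, $\Phi_{\tau_{\delta},1}$ is therefore an a.s.\ finite random matrix, and hence $\tilde Y_1:=\Phi_{\tau_{\delta},1}Y_1$ is an a.s.\ finite random vector.

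Next, for $t\ge\tau_{\delta}$ the semigroup identity gives $\Gamma_t^{1}=\Phi_{t,1}Y_1=\Phi_{t,\tau_{\delta}}\tilde Y_1$, and applying the decay estimate with $s=\tau_{\delta}$ (legitimate on the probability-one event that $\theta_u$ stays within $\delta<\delta^{\ast}$ of $\theta^{\ast}$ for all $u\ge\tau_{\delta}$) yields
\begin{eqnarray*}
\sqrt{t}\,\norm{\Gamma_t^{1}}\;\le\;\sqrt{t}\,\norm{\Phi_{t,\tau_{\delta}}}\,\norm{\tilde Y_1}\;\le\;\sqrt{K}\,\tau_{\delta}^{CC_{\alpha}}\,\norm{\tilde Y_1}\;t^{1/2-CC_{\alpha}}.
\end{eqnarray*}
Since $CC_{\alpha}>1$ by Condition \ref{A:Assumption2}, the exponent $1/2-CC_{\alpha}$ is strictly negative, so the right-hand side tends to $0$ as $t\rightarrow\infty$ for a.e.\ $\omega$; hence $\sqrt{t}\,\Gamma_t^{1}\overset{a.s.} \rightarrow 0$.

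The main obstacle is precisely this localization: in the non-convex case $\bar g$ is only locally strongly convex, so the decay of $\Phi_{t,s}$ cannot be asserted uniformly from the initial time, and one must absorb the random but a.s.\ finite factor $\Phi_{\tau_{\delta},1}$ to reach the good regime. What makes this harmless is the a.s.\ finiteness of $\tau_{\delta}$, itself a consequence of Theorem \ref{T:AlternateMainTheoremSGD1} together with Condition \ref{A:Assumption2}, as established just above; note that although $\tau_{\delta}$ is not a stopping time this is irrelevant here, since the entire argument is pathwise.
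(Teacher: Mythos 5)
Your proof is correct and follows essentially the same route as the paper's: split off $\Phi_{\tau_{\delta},1}Y_1$ via the semigroup property, apply the decay bound $\norm{\Phi_{t,\tau_{\delta}}}\le K t^{-CC_{\alpha}}\tau_{\delta}^{CC_{\alpha}}$ valid past $\tau_{\delta}$, and use $CC_{\alpha}>1$ together with the a.s.\ finiteness of $\tau_{\delta}$. Your additional remarks on why $\Phi_{\tau_{\delta},1}$ is a.s.\ finite and why the decay estimate is only available after $\tau_{\delta}$ are accurate elaborations of points the paper leaves implicit.
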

\begin{proof}
For $t\geq\tau_{\delta}$ we have
\begin{eqnarray}
\norm{ \sqrt{t}  \Gamma_t^1 }_1 = \sqrt{t} \norm{  \Phi_{t, \tau_{\bar \delta} } \Phi_{\tau_{\bar \delta}, 1}  Y_1 }_1
& \leq & K \sqrt{t} \norm{ \Phi_{t, \tau_{\bar \delta} } }  \norm{\Phi_{\tau_{\bar \delta}, 1} Y_1 } \notag \\
&\leq & K  t^{-C C_{\alpha} + \frac{1}{2} } \tau_{\bar \delta}^{ C C_{\alpha} }  \norm{\Phi_{\tau_{\bar \delta}, 1} Y_1 }   \notag \\
&=& K(\tau_{\bar \delta} ) t^{-C C_{\alpha} + \frac{1}{2} }.\notag
\end{eqnarray}
where $K(\tau_{\bar \delta} ) = \tau_{\bar \delta}^{ C C_{\alpha} }  \norm{\Phi_{\tau_{\bar \delta}, 1} Y_1 }$ is almost surely finite since $\mathbb{P} [ \tau_{\bar \delta} < \infty] = 1$.  Then, since $C C_{\alpha} >1$, $\sqrt{t}  \Gamma_t^1 \overset{a.s.} \rightarrow 0$ as $t \rightarrow \infty$.
\end{proof}

Let $v(x, \theta)$ satisfy the Poisson PDE $\mathcal{L}_x v = G(x, \theta)$ where $G(x, \theta) = \nabla_{\theta} \bar g(\theta) - \nabla_{\theta} g (x, \theta )$.  The solution $v(t,x)$ and its relevant partial derivatives will be growing at most polynomially in $\norm{x}$ and linearly in $\norm{\theta}$ due to the assumptions of Theorem \ref{T:MainTheorem3}. It\^{o}'s formula yields the representation
\begin{eqnarray}
\Gamma_t^2 &=&  \int_1^t \alpha_s  \Phi_{t,s}  ( \nabla_{\theta} \bar g(\theta_s) - \nabla_{\theta} g (X_s, \theta_s ) ) ds \notag \\
&=&  \int_1^t \alpha_s \Phi_{t,s}  d v_s - \int_1^t \alpha_s  \Phi_{t,s}  \nabla_x v(X_s, \theta_s) d W_s \notag \\
& &-  \int_1^t \alpha_s \Phi_{t,s}  \mathcal{L}_{\theta} v (X_s, \theta_s) ds -  \int_1^t \alpha_s^2  \Phi_{t,s}  \nabla_{\theta} v (X_{s},\theta_{s})  \nabla_{\theta} f(X_s, \theta_s) dW_{s} \notag \\
& &-  \int_1^t \alpha_s^2 \Phi_{t,s} \nabla_{\theta x} v(X_s, \theta_s) \nabla_{\theta} f(X_s, \theta_s) ds \notag \\
&=& \Gamma_t^{2,1} + \Gamma_t^{2,2}+ \Gamma_t^{2,3} + \Gamma_t^{2,4} + \Gamma_t^{2,5}.\notag
\end{eqnarray}

In order now to analyze the terms involved in \textcolor{black}{$\Gamma_{t}^{2,2}$ and $\Gamma_{t}^{3}$} we need some intermediate results that we state now below. For presentation purposes the proof of these Lemmas is deferred to the end of this section.

\begin{lemma}\label{L:MainTermsConvergenceZero}
Let $\zeta(x, \theta)$ be a (potentially matrix-valued) function that can grow at most polynomially in $x$ and $\theta$.  Then, we have that
\begin{enumerate}
\item{
$I_{t}^{1}=\sqrt{t} \int_1^t \alpha_s^2 \Phi_{t,s} \zeta(X_s, \theta_s) ds \overset{a.s.} \rightarrow 0$, as $t \rightarrow \infty$, and}
\item{$I_{t}^{2}=\sqrt{t} \int_1^t \alpha_s^2 \Phi_{t,s} \zeta(X_s, \theta_s) dW_{s} \rightarrow 0$, in probability, as $t \rightarrow \infty$.}
\end{enumerate}
\end{lemma}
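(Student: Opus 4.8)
The plan is to reduce both statements to estimates that hold only after the trajectory has settled in the convex neighborhood of $\theta^{\ast}$, while carefully handling the fact that the entrance time $\tau_{\delta}$ is not a stopping time. Fix a large deterministic $T_{0}$ and introduce the genuine stopping time $\sigma_{T_{0}}=\inf\{t\ge T_{0}:\norm{\theta_{t}-\theta^{\ast}}\ge\delta\}$; note that $A_{T_{0}}:=\{\tau_{\delta}\le T_{0}\}=\{\sigma_{T_{0}}=\infty\}$ and $\mathbb{P}(A_{T_{0}})\uparrow1$ as $T_{0}\to\infty$ since $\tau_{\delta}<\infty$ a.s. On $A_{T_{0}}$ I would split each of $I^{1}_{t},I^{2}_{t}$ at $T_{0}$ via the semi-group property $\Phi_{t,s}=\Phi_{t,T_{0}}\Phi_{T_{0},s}$: the $[1,T_{0}]$ part equals $\sqrt{t}\,\Phi_{t,T_{0}}\xi_{T_{0}}$, where $\xi_{T_{0}}$ (a Lebesgue integral in part 1, an It\^o integral in part 2, of a process a.s. continuous on the compact interval $[1,T_{0}]$) is an a.s.-finite $\mathcal{F}_{T_{0}}$-measurable random vector; since $t>T_{0}\ge\tau_{\delta}$ on $A_{T_{0}}$, the bound $\norm{\Phi_{t,T_{0}}}\le Kt^{-CC_{\alpha}}T_{0}^{CC_{\alpha}}$ applies, so $\sqrt{t}\,\Phi_{t,T_{0}}\xi_{T_{0}}\to0$ a.s. on $A_{T_{0}}$ because $CC_{\alpha}>1$. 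This reduces everything to the $[T_{0},t]$ part, along which $\theta_{s}$ stays in $\{\norm{\theta-\theta^{\ast}}<\delta\}$, so $\norm{\theta_{s}}$ is bounded there, $\norm{\zeta(X_{s},\theta_{s})}\le K(1+\norm{X_{s}}^{r})$, and the decay estimate $\norm{\Phi_{t,s}}^{2}\le Kt^{-2CC_{\alpha}}s^{2CC_{\alpha}}$ and the coercivity $\langle z,\Delta\bar g(\theta^{1}_{s})z\rangle\ge C\norm{z}^{2}$ are legitimate.

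For part 1, writing $Z^{1}_{t}=\int_{T_{0}}^{t}\alpha_{s}^{2}\Phi_{t,s}\zeta(X_{s},\theta_{s})\,ds$, I would bound on $A_{T_{0}}$
\[
\norm{Z^{1}_{t}}\le Kt^{-CC_{\alpha}}\int_{T_{0}}^{t}s^{CC_{\alpha}-2}\norm{\zeta(X_{s},\theta_{s})}\,ds,
\]
and then invoke $\norm{X_{s}}/s^{\epsilon}\to0$ a.s. for every $\epsilon>0$ — a standard consequence of $\sup_{t}\mathbb{E}\norm{X_{t}}^{p}<\infty$ for all $p\ge2$ via Chebyshev and Borel--Cantelli over unit time blocks — to get $\norm{\zeta(X_{s},\theta_{s})}\le K(\omega)(1+s^{r\epsilon})$ and hence $\norm{\sqrt{t}\,Z^{1}_{t}}\le K(\omega)\,t^{-1/2+r\epsilon}\to0$ for $\epsilon$ small. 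Combined with the first paragraph, $I^{1}_{t}\to0$ a.s. on every $A_{T_{0}}$, and since $\bigcup_{T_{0}}A_{T_{0}}$ has full probability, $I^{1}_{t}\overset{a.s.}{\rightarrow}0$.

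For part 2, I would use that $Z^{2}_{t}=\int_{T_{0}}^{t}\alpha_{s}^{2}\Phi_{t,s}\zeta(X_{s},\theta_{s})\,dW_{s}$ is, by the variation-of-constants formula, the solution of the linear SDE $dZ^{2}_{t}=-\alpha_{t}\Delta\bar g(\theta^{1}_{t})Z^{2}_{t}\,dt+\alpha_{t}^{2}\zeta(X_{t},\theta_{t})\,dW_{t}$, $Z^{2}_{T_{0}}=0$. Applying It\^o's formula to $\norm{Z^{2}_{t}}^{2}$, using the coercivity on $[T_{0},t\wedge\sigma_{T_{0}}]$, taking expectations (after the usual localization that removes the martingale term) and using $\sup_{t}\mathbb{E}\norm{\zeta(X_{t},\theta_{t})}^{2}\le K$, Gr\"onwall's inequality gives $t^{2CC_{\alpha}}\mathbb{E}\norm{Z^{2}_{t\wedge\sigma_{T_{0}}}}^{2}\le K\int_{T_{0}}^{t}s^{2CC_{\alpha}-4}\,ds$, hence $\mathbb{E}\norm{Z^{2}_{t\wedge\sigma_{T_{0}}}}^{2}=o(t^{-2})$ and $\mathbb{E}\norm{\sqrt{t}\,Z^{2}_{t\wedge\sigma_{T_{0}}}}^{2}\to0$ (it is exactly $CC_{\alpha}>1$ that makes $\min(3,2CC_{\alpha})>2$). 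To conclude, given $\epsilon,\eta>0$ I would pick $T_{0}$ with $\mathbb{P}(A_{T_{0}}^{c})<\eta$; on $A_{T_{0}}$ we have $\sigma_{T_{0}}=\infty$, so
\[
\mathbb{P}(\norm{I^{2}_{t}}>\epsilon)\le\eta+\mathbb{P}\big(\mathbf{1}_{A_{T_{0}}}\norm{\sqrt{t}\,\Phi_{t,T_{0}}\xi_{T_{0}}}>\tfrac{\epsilon}{2}\big)+\tfrac{4}{\epsilon^{2}}\mathbb{E}\norm{\sqrt{t}\,Z^{2}_{t\wedge\sigma_{T_{0}}}}^{2},
\]
and letting $t\to\infty$ the last two terms vanish, so $\limsup_{t}\mathbb{P}(\norm{I^{2}_{t}}>\epsilon)\le\eta$; as $\eta>0$ is arbitrary, $I^{2}_{t}\to0$ in probability.

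The hard part is precisely the localization step: since $\tau_{\delta}$ is not adapted, the favourable bounds on $\Phi_{t,s}$ and on $\Delta\bar g$ cannot be applied "for all $s$", and one must interpose the genuine stopping time $\sigma_{T_{0}}$ and send $T_{0}\to\infty$ along $A_{T_{0}}\uparrow\Omega$. Everything afterwards is routine — a deterministic growth estimate for part 1 (needing only the a.s. polynomial growth of $\norm{X_{t}}$) and a standard linear-SDE $L^{2}$ estimate for part 2 — and relies only on the already-established uniform-in-time moment bounds for $X_{t}$ and $\theta_{t}$ together with the at-most-polynomial growth of $\zeta$.
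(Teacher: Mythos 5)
Your argument is sound in outline and reaches the right conclusions, but it takes a genuinely different route from the paper in both halves. For the localization, the paper splits the integral directly at the non-stopping time $\tau_{\delta}$ and controls the tail piece by dominating it with an adapted quantity $L_{t}$ (an integral from $1$ to $t$), whose expectation is computable from the pointwise moment bounds; almost sure convergence then follows from Markov's inequality, Borel--Cantelli along the dyadic times $2^{n}$, and monotonicity of $L$ to interpolate between them. Your device of a deterministic $T_{0}$, the genuine stopping time $\sigma_{T_{0}}$, and the exhausting events $A_{T_{0}}=\{\tau_{\delta}\le T_{0}\}$ is a clean alternative that makes the non-adaptedness of $\tau_{\delta}$ harmless by a union over $T_{0}$; it buys conceptual clarity at the cost of an extra limit. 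For the stochastic integral, the paper does not estimate $\mathbb{E}\norm{Z_{t}^{2}}^{2}$ at all: it shows the quadratic variation tends to zero and invokes Lemma \ref{KutoyantsExtensionLemma} (the Gaussian-perturbation trick from \cite{Kutoyants}), precisely because $\Phi_{t,s}$ is anticipating and the integral is not a martingale in $t$. Your observation that $Z^{2}_{t}$ is, by reversing Duhamel, the solution of an honest linear SDE restores adaptedness and lets you work with $L^{2}$ estimates directly; this is a legitimate and arguably more self-contained alternative.

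Two steps need shoring up. First, in part 1 you invoke $\norm{X_{s}}/s^{\epsilon}\to 0$ a.s.; the pointwise bound $\sup_{t}\mathbb{E}\norm{X_{t}}^{p}<\infty$ alone does not give this for a continuous-time process --- you need $\sup_{n}\mathbb{E}\bigl[\sup_{s\in[n,n+1]}\norm{X_{s}}^{p}\bigr]<\infty$, which is true here (constant diffusion, inward-pointing drift) but is an additional estimate the paper deliberately avoids by working with the integral functional $L_{t}$ instead of a pathwise bound on $X$. Second, the Gr\"onwall step for the stopped process does not close as written: from It\^o's formula one gets
\begin{align*}
\mathbb{E}\norm{Z^{2}_{t\wedge\sigma}}^{2}\le -2C\int_{T_{0}}^{t}\alpha_{s}\,\mathbb{E}\bigl[\norm{Z^{2}_{s}}^{2}\mathds{1}_{\{s<\sigma\}}\bigr]ds+K\int_{T_{0}}^{t}\alpha_{s}^{4}\,ds,
\end{align*}
and $\mathbb{E}[\norm{Z^{2}_{s}}^{2}\mathds{1}_{\{s<\sigma\}}]$ differs from $\mathbb{E}\norm{Z^{2}_{s\wedge\sigma}}^{2}$ by the frozen contribution $\mathbb{E}[\norm{Z^{2}_{\sigma}}^{2}\mathds{1}_{\{\sigma\le s\}}]$, which enters with the wrong sign, so you cannot simply replace the former by the latter in the negative term. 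The standard fix is not to stop $Z^{2}$ but to modify the equation: let $\hat{Z}$ solve the same linear SDE with $\Delta\bar g(\theta^{1}_{t})$ replaced by $CI$ and the noise coefficient set to zero for $t\ge\sigma_{T_{0}}$; then $\hat{Z}$ coincides with $Z^{2}$ on $A_{T_{0}}$, satisfies the coercive inequality globally, and your $o(t^{-2})$ bound goes through for $\hat{Z}$, after which the argument concludes as you describe. With these two repairs the proof is complete.
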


Lemma \ref{L:MainTermsConvergenceZero} immediately gives that the terms $\sqrt{t} \Gamma_t^{2,3}$ (recall that the operator $\mathcal{L}_{\theta}$ has a multiplicative factor of $\alpha_{t}$), $\sqrt{t} \Gamma_t^{2,5}$ converge almost surely to zero and that $\sqrt{t} \Gamma_t^{2,4}$ converges in probability to zero, as $t\rightarrow\infty$. To see this it is enough to notice that all of these terms take the form of the $I_{t}^{1}$ and $I_{t}^{2}$ quantities mentioned in Lemma \ref{L:MainTermsConvergenceZero}. The term $\sqrt{t} \Gamma_{t}^{2,1}$ also converges almost surely to zero by first rewriting using It\^{o} formula, as it was done for the corresponding term of Section \ref{L2proof} (refer to (\ref{Eq:Gamma21term}) with $p=2$ and replace $\Psi_{t,s}^{(p)}$ by $\Phi_{t,s}$) and then using again Lemma \ref{L:MainTermsConvergenceZero}.

The limiting Gaussian random variable will be produced by $\Gamma_t^{2,2}$ and $\Gamma_t^3$.  Therefore, it remains to analyze $\sqrt{t} ( \Gamma_t^{2,2} + \Gamma_t^3 )$. We have
\begin{eqnarray*}
 \sqrt{t} ( \Gamma_t^{2,2} + \Gamma_t^3 ) = \sqrt{t} \int_1^t \alpha_s  \Phi_{t,s} \bigg{(}  \nabla_{\theta} f(X_s, \theta_s) -  \nabla_x v(X_s, \theta_s) \bigg{)} d W_s .
\end{eqnarray*}

Using the results from \cite{Kutoyants}, it is sufficient to prove the convergence in probability of
\begin{eqnarray*}
\Sigma_t = t \int_1^t \alpha_s^2 \Phi_{t,s} h(X_s, \theta_s)      \Phi_{t,s}^{\top} ds
\end{eqnarray*}
to a deterministic quantity. We recall here that $h(x,\theta)=\bigg{(}  \nabla_{\theta} f(x, \theta) -  \nabla_x v(x, \theta) \bigg{)}\bigg{(}  \nabla_{\theta} f(x, \theta) -  \nabla_x v(x, \theta) \bigg{)}^{\top}$. As before, let $\Phi_{t,s}^{\ast}$ be the solution to
\begin{eqnarray}
d \Phi_{t,s}^{\ast} &=& - \alpha_t \Delta \bar g( \theta^{\ast}) \Phi_{t,s}^{\ast} dt, \qquad
\Phi_{s,s}^{\ast} = I. \notag
\end{eqnarray}
Recall that $\Phi_{t,s}^{\ast}$ satisfies the bound
\begin{eqnarray*}
\norm{  \Phi_{t,s}^{\ast} }^2  \leq K t^{-2 C  C_{\alpha} } s^{ 2 C C_{\alpha } }.
\end{eqnarray*}

Define $\bar \Sigma_t$ and $\bar \Sigma_t^{\ast}$ as:
\begin{eqnarray}
\bar \Sigma_t &=& t \int_1^t \alpha_s^2 \Phi_{t,s} \bar h(\theta^{\ast} )      \Phi_{t,s}^{\top} ds, \text{ and }
\bar \Sigma^{\ast}_t =  t \int_1^t \alpha_s^2 \Phi_{t,s}^{\ast} \bar h(\theta^{\ast} )      \Phi_{t,s}^{\ast, \top} ds.\notag
\end{eqnarray}

Note that we already proved in the previous section that $\bar \Sigma^{\ast}_{t}$ converges in probability to $\bar{\Sigma}$ as $t\rightarrow\infty$.  We would like to show that $\norm{\bar \Sigma_t - \bar \Sigma^{\ast}_{t} } \rightarrow 0$ as $t \rightarrow \infty$, almost surely.

\begin{lemma} \label{NonConvexLemmabarSigmaAst}
$ \norm{\bar \Sigma_t  - \bar \Sigma^{\ast}_{t} } \rightarrow 0$, with probability 1, as $t \rightarrow \infty$.
\end{lemma}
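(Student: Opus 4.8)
The plan is to compare the two fundamental solutions $\Phi_{t,s}$ and $\Phi^{\ast}_{t,s}$ directly through a variation‑of‑parameters identity, exploiting that $\theta_t\to\theta^{\ast}$ almost surely (just established from Theorem \ref{T:AlternateMainTheoremSGD1}). Since $\delta<\delta^{\ast}$, for $t\geq\tau_{\delta}$ the Taylor point $\theta^{1}_{t}$ lies in the strongly convex ball $\norm{\theta-\theta^{\ast}}<\delta^{\ast}$, and since $\bar g\in C^{3}$ the Hessian $\Delta\bar g$ is Lipschitz on that compact ball; hence the nonincreasing, almost surely finite random quantity
\[
\epsilon_t:=\sup_{r\geq t\vee\tau_{\delta}}\norm{\Delta\bar g(\theta^{1}_{r})-\Delta\bar g(\theta^{\ast})}
\]
satisfies $\epsilon_t\to 0$ almost surely. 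From here on I work pathwise on the full‑measure event where $\theta_t\to\theta^{\ast}$, $\tau_{\delta}<\infty$ and $\epsilon_t\to 0$; because $\tau_{\delta}$ is not a stopping time we cannot take expectations, but almost sure convergence is exactly what the lemma asserts. Fix such an $\omega$ and take $t>\tau_{\delta}$.

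\emph{Step 1: the contribution of $[1,\tau_{\delta}]$ is negligible.} Split $\int_1^t=\int_1^{\tau_{\delta}}+\int_{\tau_{\delta}}^t$ in both $\bar\Sigma_t$ and $\bar\Sigma^{\ast}_t$. For $\bar\Sigma_t$, use the semigroup property $\Phi_{t,s}=\Phi_{t,\tau_{\delta}}\Phi_{\tau_{\delta},s}$ and $\norm{\Phi_{t,\tau_{\delta}}}^{2}\leq Kt^{-2CC_{\alpha}}\tau_{\delta}^{2CC_{\alpha}}$ to bound the $[1,\tau_{\delta}]$‑piece by $K\,t^{1-2CC_{\alpha}}\tau_{\delta}^{2CC_{\alpha}}\int_1^{\tau_{\delta}}\alpha_s^2\norm{\Phi_{\tau_{\delta},s}}^2\,ds$; the integral is an almost surely finite random constant (its integrand is continuous on the a.s.-compact interval $[1,\tau_{\delta}]$), so the piece is $O_{\omega}(t^{1-2CC_{\alpha}})\to 0$ because $CC_{\alpha}>1$. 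For $\bar\Sigma^{\ast}_t$, use $\norm{\Phi^{\ast}_{t,s}}^{2}\leq Kt^{-2CC_{\alpha}}s^{2CC_{\alpha}}$ directly, giving a bound $K\,t^{1-2CC_{\alpha}}\int_1^{\tau_{\delta}}s^{2CC_{\alpha}-2}\,ds$, again a finite random constant times $t^{1-2CC_{\alpha}}\to 0$. Thus it suffices to show that $t\int_{\tau_{\delta}}^t\alpha_s^2\big(\Phi_{t,s}\bar h(\theta^{\ast})\Phi_{t,s}^{\top}-\Phi^{\ast}_{t,s}\bar h(\theta^{\ast})\Phi^{\ast,\top}_{t,s}\big)\,ds\to 0$.

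\emph{Step 2: comparing $\Phi$ and $\Phi^{\ast}$.} Write $\Phi\bar h\Phi^{\top}-\Phi^{\ast}\bar h\Phi^{\ast,\top}=(\Phi-\Phi^{\ast})\bar h\Phi^{\top}+\Phi^{\ast}\bar h(\Phi-\Phi^{\ast})^{\top}$, which reduces everything to an estimate on $\norm{\Phi_{t,s}-\Phi^{\ast}_{t,s}}$ for $\tau_{\delta}<s<t$. Differentiating $\Phi_{t,s}-\Phi^{\ast}_{t,s}$ in $t$, adding and subtracting $\alpha_t\Delta\bar g(\theta^{\ast})\Phi_{t,s}$, and recognizing $\Phi^{\ast}_{t,u}$ as the fundamental solution of the linear ODE with coefficient $-\alpha_t\Delta\bar g(\theta^{\ast})$ gives
\[
\Phi_{t,s}-\Phi^{\ast}_{t,s}=-\int_s^t\Phi^{\ast}_{t,u}\,\alpha_u\big(\Delta\bar g(\theta^{1}_{u})-\Delta\bar g(\theta^{\ast})\big)\Phi_{u,s}\,du .
\]
Using $\norm{\Phi^{\ast}_{t,u}}\leq Kt^{-CC_{\alpha}}u^{CC_{\alpha}}$, $\norm{\Phi_{u,s}}\leq Ku^{-CC_{\alpha}}s^{CC_{\alpha}}$ (valid since $u\geq s>\tau_{\delta}$), $\alpha_u=C_{\alpha}/u$, and $\norm{\Delta\bar g(\theta^{1}_{u})-\Delta\bar g(\theta^{\ast})}\leq\epsilon_u$, this yields
\[
\norm{\Phi_{t,s}-\Phi^{\ast}_{t,s}}\leq K\,t^{-CC_{\alpha}}s^{CC_{\alpha}}\int_s^t\frac{\epsilon_u}{u}\,du .
\]

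\emph{Step 3: the final integral estimate.} Insert this bound together with $\norm{\Phi_{t,s}},\norm{\Phi^{\ast}_{t,s}}\leq Kt^{-CC_{\alpha}}s^{CC_{\alpha}}$ and $\norm{\bar h(\theta^{\ast})}=O(1)$; the quantity from Step 1 is then bounded by
\[
K\,t^{1-2CC_{\alpha}}\int_{\tau_{\delta}}^t s^{2CC_{\alpha}-2}\Big(\int_s^t\frac{\epsilon_u}{u}\,du\Big)ds
=K\,t^{1-2CC_{\alpha}}\int_{\tau_{\delta}}^t\frac{\epsilon_u}{u}\Big(\int_{\tau_{\delta}}^u s^{2CC_{\alpha}-2}\,ds\Big)du
\leq \frac{K}{2CC_{\alpha}-1}\,t^{1-2CC_{\alpha}}\int_{\tau_{\delta}}^t\epsilon_u\,u^{2CC_{\alpha}-2}\,du ,
\]
the middle step being Fubini for the nonnegative integrand. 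With $a:=2CC_{\alpha}-1>0$, the map $t\mapsto t^{-a}\int_{\tau_{\delta}}^t\epsilon_u u^{a-1}\,du$ tends to $0$ by a standard averaging argument: given $\varepsilon>0$ pick $U$ with $\epsilon_u<\varepsilon$ for $u\geq U$; the part of the integral over $[\tau_{\delta},U]$ is a fixed random constant times $t^{-a}\to 0$, while the part over $[U,t]$ is at most $\varepsilon\,t^{-a}\int_{\tau_{\delta}}^t u^{a-1}\,du\leq\varepsilon/a$; letting $t\to\infty$ and then $\varepsilon\to 0$ gives the claim. Collecting Steps 1--3 yields $\norm{\bar\Sigma_t-\bar\Sigma^{\ast}_t}_1\to 0$ almost surely, hence $\norm{\bar\Sigma_t-\bar\Sigma^{\ast}_t}\to 0$ almost surely by equivalence of norms on $\mathbb{R}^{k\times k}$.

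\emph{Expected main obstacle.} The delicate point is Step 3: the crude estimate $\int_s^t\epsilon_u u^{-1}\,du\leq\epsilon_s\log(t/s)\leq\epsilon_s\log t$ only yields boundedness of the resulting expression, not decay; interchanging the order of integration \emph{first} and only afterwards exploiting $\epsilon_u\to0$ is what makes the bound close. A secondary care point is that, since $\tau_{\delta}$ is not a stopping time, every estimate must be made pathwise, so one must carry along the $\omega$-dependent constants from Step 1 and invoke $\theta_t\to\theta^{\ast}$ in place of the (unavailable) expectation bounds used in the strongly convex case.
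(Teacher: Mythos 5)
Your proof is correct and follows essentially the same route as the paper: both compare $\Phi_{t,s}$ with $\Phi^{\ast}_{t,s}$ via a variation-of-parameters identity and treat the $[1,\tau_{\delta}]$ and $[\tau_{\delta},t]$ regimes separately using the semigroup property. The one genuine difference is in how smallness is extracted: the paper writes the perturbation as $\alpha_u C_u Y_u$ (third derivatives of $\bar g$ times $Y_u$), bounds $\norm{Y_u}\leq\delta$ on $[\tau_{\delta},t]$ to get $\limsup_t\norm{\bar\Sigma_t-\bar\Sigma^{\ast}_t}\leq K\delta$, and then lets $\delta\to 0$; you instead keep $\delta$ fixed, use $\epsilon_u=\sup_{r\geq u}\norm{\Delta\bar g(\theta^1_r)-\Delta\bar g(\theta^{\ast})}\to 0$, and close the estimate with the Fubini-plus-averaging argument of your Step 3. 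Both are valid; your version yields the limit directly for a single $\delta$, at the cost of the slightly more involved interchange of integrals.
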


To prove that $\norm{\Sigma_t - \bar \Sigma_t }  \rightarrow 0$, we begin by defining
\begin{eqnarray*}
\bar V_t = t \int_1^t \alpha_s^2 \Phi_{t,s} \bar h(\theta_s)      \Phi_{t,s}^{\top} ds
\end{eqnarray*}
By the triangle inequality, $\norm{ \Sigma_t - \bar \Sigma_t } \leq \norm{  \Sigma_t - \bar V_t }+ \norm{ \bar V_t - \bar \Sigma_t } $.  We then have the following lemmas.

\begin{lemma} \label{NonConvexLemmabarVbarSigma}
$ \norm{ \bar V_t - \bar \Sigma_t }  \rightarrow 0$, with probability 1, as $t \rightarrow \infty$.
\end{lemma}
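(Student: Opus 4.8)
The plan is to argue pathwise, because $\tau_{\delta}$ is random (and not a stopping time), so the expectation-based argument used in Section \ref{CLTproof} is unavailable. First I would write
\[
\bar V_t - \bar \Sigma_t = t \int_1^t \alpha_s^2 \Phi_{t,s} \big{(} \bar h(\theta_s) - \bar h(\theta^{\ast}) \big{)} \Phi_{t,s}^{\top} ds .
\]
Since we have already shown in this section that $\theta_t \rightarrow \theta^{\ast}$ almost surely, and $\bar h$ is continuous (by the regularity of $f$ and of the Poisson solution $v$ from Theorem \ref{T:RegularityPoisson} together with dominated convergence), it follows that $\norm{ \bar h(\theta_s) - \bar h(\theta^{\ast}) } \rightarrow 0$ almost surely as $s \rightarrow \infty$. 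So I would fix $\varepsilon > 0$, work on the full-measure event on which $\theta_t \rightarrow \theta^{\ast}$ and $\tau_{\delta} < \infty$, choose a path-dependent time $T_{\varepsilon} = T_{\varepsilon}(\omega) \geq \tau_{\delta}$ with $\norm{ \bar h(\theta_s) - \bar h(\theta^{\ast}) } < \varepsilon$ for all $s \geq T_{\varepsilon}$, and split $\bar V_t - \bar \Sigma_t = A_t + B_t$, where $A_t$ is the integral over $[1, T_{\varepsilon}]$ and $B_t$ is the integral over $[T_{\varepsilon}, t]$.

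For $A_t$, I would use the semigroup identity $\Phi_{t,s} = \Phi_{t,T_{\varepsilon}} \Phi_{T_{\varepsilon},s}$ to factor out $\norm{ \Phi_{t,T_{\varepsilon}} }^2$, which is bounded by $K t^{-2 C C_{\alpha}} T_{\varepsilon}^{2 C C_{\alpha}}$ because $t > T_{\varepsilon} \geq \tau_{\delta}$. The remaining factor $\int_1^{T_{\varepsilon}} \alpha_s^2 \norm{ \Phi_{T_{\varepsilon},s} }^2 \norm{ \bar h(\theta_s) - \bar h(\theta^{\ast}) } ds$ is an almost surely finite random constant, being the integral of a path-continuous (hence locally bounded) integrand over the compact interval $[1, T_{\varepsilon}]$. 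Therefore $\norm{ A_t } \leq K(\omega)\, t^{1 - 2 C C_{\alpha}} \rightarrow 0$, using $C C_{\alpha} > 1$. For $B_t$, on the range $s \geq T_{\varepsilon} > \tau_{\delta}$ both the decay bound $\norm{ \Phi_{t,s} }^2 \leq K t^{-2 C C_{\alpha}} s^{2 C C_{\alpha}}$ and $\norm{ \bar h(\theta_s) - \bar h(\theta^{\ast}) } < \varepsilon$ are available, so $\norm{ B_t } \leq \varepsilon\, t \int_{T_{\varepsilon}}^t \alpha_s^2 \norm{ \Phi_{t,s} }^2 ds \leq K \varepsilon\, t^{1 - 2 C C_{\alpha}} \int_{T_{\varepsilon}}^t s^{2 C C_{\alpha} - 2} ds$, and the elementary integral estimate (using $2 C C_{\alpha} - 1 > 1$) bounds this by $\tfrac{K C_{\alpha}^2}{2 C C_{\alpha} - 1}\, \varepsilon$ uniformly in $t \geq T_{\varepsilon}$. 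Combining the two pieces, $\limsup_{t \rightarrow \infty} \norm{ \bar V_t - \bar \Sigma_t } \leq \tfrac{K C_{\alpha}^2}{2 C C_{\alpha} - 1}\, \varepsilon$ almost surely, and letting $\varepsilon \downarrow 0$ gives the claim.

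The hard part will be the bookkeeping forced by the non-stopping-time $\tau_{\delta}$: the decay estimate for the random fundamental solution $\Phi_{t,s}$ holds only once both time arguments exceed $\tau_{\delta}$ (that is where the local strong convexity of $\bar g$ near $\theta^{\ast}$ enters), so the contribution of the interval before $T_{\varepsilon}$ cannot be made small in $s$ and must instead be absorbed entirely by the polynomial decay $t^{1 - 2 C C_{\alpha}}$ of $\Phi_{t, T_{\varepsilon}}$ — which is precisely why $C C_{\alpha} > 1$ is needed in this step. Apart from that, the argument is a pathwise analogue of the corresponding step in Section \ref{CLTproof}, with the $L^1$/moment input there replaced by the almost sure convergence $\theta_t \rightarrow \theta^{\ast}$ and the a.s. finiteness of $\tau_{\delta}$.
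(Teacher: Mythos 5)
Your proposal is correct and follows essentially the same route as the paper: split the time integral at a random, almost surely finite (non-stopping) time, absorb the early piece via the semigroup property and the polynomial decay $\norm{\Phi_{t,\cdot}}^2\leq Kt^{-2CC_{\alpha}}(\cdot)^{2CC_{\alpha}}$ together with $CC_{\alpha}>1$, and make the late piece small uniformly in $t$. The only (cosmetic) difference is that the paper splits at $\tau_{\delta}$ and bounds $\bar h(\theta_s)-\bar h(\theta^{\ast})$ by a mean-value expansion $\nabla_{\theta}\bar h(\theta_s^1)^{\top}(\theta_s-\theta^{\ast})$ with $\norm{\theta_s-\theta^{\ast}}<\delta$, obtaining $\limsup\leq K\delta^2$ and letting $\delta\downarrow 0$, whereas you split at $T_{\varepsilon}$ and use continuity of $\bar h$ directly.
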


\begin{lemma} \label{NonConvexLemmaSigmabarV}
$\norm{  \Sigma_t - \bar V_t } \rightarrow 0$, in probability, as $t \rightarrow \infty$.
\end{lemma}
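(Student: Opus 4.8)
\textbf{Proof proposal for Lemma \ref{NonConvexLemmaSigmabarV}.} The plan is to transplant the argument sketched for the strongly convex case at the end of Section~\ref{CLTproof} — where the analogous statement $\mathbb{E}\big|(\Sigma_t-\bar V_t)_{i,j}\big|\to 0$ is obtained with the \emph{deterministic} fundamental solution $\Phi^{*}_{t,s}$ — to the random $\Phi_{t,s}$, paying attention to the two features peculiar to the non-convex setting: $\Phi_{t,s}$ is not $\mathcal{F}_s$-measurable and $\tau_\delta$ is not a stopping time. Start from
\begin{eqnarray*}
\Sigma_t-\bar V_t = t\int_1^t\alpha_s^2\,\Phi_{t,s}\,H(X_s,\theta_s)\,\Phi_{t,s}^{\top}\,ds,\qquad H(x,\theta)=h(x,\theta)-\bar h(\theta),
\end{eqnarray*}
and let $w(x,\theta)$ solve, componentwise, the Poisson equation $\mathcal{L}_x w(x,\theta)=H(x,\theta)$, which is solvable by ergodicity since $\int H(x,\theta)\pi(dx)=0$. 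By Theorem~\ref{T:RegularityPoisson} and the growth bounds on $v$ recorded in Section~\ref{QuasiConvexproof}, $w$ and all derivatives entering It\^{o}'s formula grow at most polynomially in $\norm{x}$ and $\norm{\theta}$, so that, together with $\sup_t\mathbb{E}\norm{X_t}^p<\infty$ and $\sup_t\mathbb{E}\norm{\theta_t}^p<\infty$, every factor $\mathbb{E}[(\mathrm{poly})(X_s,\theta_s)]$ appearing below is $O(1)$. It\^{o}'s formula applied to $w(X_s,\theta_s)$, exactly as in (\ref{PoissonErgodic1}), gives
\begin{eqnarray*}
H(X_s,\theta_s)\,ds = dw_s-\mathcal{L}_\theta w\, ds-\nabla_x w\, dW_s-\alpha_s\nabla_\theta w\,\nabla_\theta f\, dW_s-\alpha_s\nabla_{\theta x}w\,\nabla_\theta f\, ds.
\end{eqnarray*}
Substituting this, and performing an It\^{o} integration by parts on the $dw_s$ term against the bounded-variation prefactor $M\mapsto t\alpha_s^2\Phi_{t,s}M\Phi_{t,s}^{\top}$ — whose $s$-derivative carries an extra factor $\alpha_s$, since $\alpha_s'=O(\alpha_s^2)$, $\partial_s\Phi_{t,s}$ is proportional to $\alpha_s$ (cf.\ (\ref{MatrixCalculation1TimeDeriv})), and $\norm{\Delta\bar g}$ is globally bounded by Condition~\ref{A:AssumptionSGD2} — decomposes $\Sigma_t-\bar V_t$ into: two boundary terms $t\alpha_t^2 w_t$ and $t\alpha_1^2\,\Phi_{t,1}w_1\Phi_{t,1}^{\top}$; Lebesgue-integral terms of the form $t\int_1^t\alpha_s^3\,\Phi_{t,s}\,\zeta(X_s,\theta_s)\,\Phi_{t,s}^{\top}\,ds$ with $\zeta$ polynomially bounded; and two stochastic-integral terms $t\int_1^t\alpha_s^2\,\Phi_{t,s}\,\nabla_x w\,\Phi_{t,s}^{\top}\,dW_s$ and $t\int_1^t\alpha_s^3\,\Phi_{t,s}\,\nabla_\theta w\,\nabla_\theta f\,\Phi_{t,s}^{\top}\,dW_s$.

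The boundary terms are immediate: $t\alpha_t^2=O(1/t)$, so $\mathbb{E}\norm{t\alpha_t^2 w_t}\to 0$, while on the (a.s.) event that $\tau_\delta$ is finite, $\norm{\Phi_{t,1}}^2\le K(\omega)t^{-2CC_\alpha}$ via $\Phi_{t,1}=\Phi_{t,\tau_\delta}\Phi_{\tau_\delta,1}$ and the pathwise decay of $\Phi_{t,\tau_\delta}$, whence $t\alpha_1^2\Phi_{t,1}w_1\Phi_{t,1}^{\top}=O(t^{1-2CC_\alpha})\to 0$ a.s.\ since $CC_\alpha>1$. The Lebesgue-integral terms are of the shape of the quantity $I^1_t$ in Lemma~\ref{L:MainTermsConvergenceZero} (up to the harmless extra factor $\Phi_{t,s}^{\top}$ and the $t$ versus $\sqrt t$ normalization): splitting $\int_1^t=\int_1^{\tau_\delta}+\int_{\tau_\delta}^t$, absorbing the a.s.\ bounded first piece into the decaying factor $\Phi_{t,\tau_\delta}$ as in the bound for $\sqrt t\,\Gamma_t^1$ above, and using $\norm{\Phi_{t,s}}^2\le K t^{-2CC_\alpha}s^{2CC_\alpha}$ for $\tau_\delta\le s\le t$ together with the moment bounds, one gets a bound $K\,t\int_1^t\alpha_s^3\,t^{-2CC_\alpha}s^{2CC_\alpha}\,ds\le K\,t^{1-2CC_\alpha}\cdot t^{2CC_\alpha-2}=K t^{-1}\to 0$, so all Lebesgue terms vanish in $L^1$, hence in probability.

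The main obstacle is the two stochastic-integral terms, since $\Phi_{t,s}$ depends on $\theta^1_u$ for $u\in[s,t]$ (so these are not It\^{o} integrals) and one cannot cut at the non-stopping-time $\tau_\delta$. My plan is to peel off the deterministic part, writing e.g.
\begin{eqnarray*}
\Phi_{t,s}\nabla_x w\,\Phi_{t,s}^{\top} = \Phi^{*}_{t,s}\nabla_x w\,\Phi^{*,\top}_{t,s} + \big(\Phi_{t,s}-\Phi^{*}_{t,s}\big)\nabla_x w\,\Phi_{t,s}^{\top} + \Phi^{*}_{t,s}\nabla_x w\,\big(\Phi_{t,s}-\Phi^{*}_{t,s}\big)^{\top}.
\end{eqnarray*}
For the first summand the integrand is $\mathcal{F}_s$-adapted and $\Phi^{*}_{t,s}$ deterministic, so the It\^{o} isometry applies and, using $\norm{\Phi^{*}_{t,s}}^2\le K t^{-2CC_\alpha}s^{2CC_\alpha}$, the $L^2$ norm of the term is $O\!\big(t^2\int_1^t\alpha_s^4\,t^{-4CC_\alpha}s^{4CC_\alpha}\,ds\big)=O(t^{-1})\to 0$ — precisely the estimate behind the strongly convex case of Section~\ref{CLTproof}. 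For the two remainder summands I would use, as in the proof of Lemma~\ref{NonConvexLemmabarSigmaAst}, the Gronwall-type bound $\norm{\Phi_{t,s}-\Phi^{*}_{t,s}}\le K\,\norm{\Phi^{*}_{t,s}}\int_s^t\alpha_u\norm{\theta^1_u-\theta^{\ast}}\,du\le K\,\norm{\Phi^{*}_{t,s}}\,\varepsilon_s$ valid for $\tau_\delta\le s\le t$, where $\varepsilon_s:=\int_s^{\infty}\alpha_u\norm{\theta_u-\theta^{\ast}}\,du\to 0$ a.s.\ (this uses $\theta_u\to\theta^{\ast}$ together with the $L^2$ rate $\mathbb{E}\norm{\theta_u-\theta^{\ast}}^2=O(1/u)$ available past $\tau_\delta$, which makes $\alpha_u\norm{\theta_u-\theta^{\ast}}$ a.s.\ integrable on $[\tau_\delta,\infty)$). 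Splitting the $ds$-integral at a large deterministic $S$, the tail $[S,t]$ then contributes at most $\varepsilon_S\cdot O(1)$ and the head $[1,S]$ at most $O(t^{1-2CC_\alpha})$, both $\to 0$ on letting $t\to\infty$ and then $S\to\infty$; the non-stopping-time nature of $\tau_\delta$ is handled throughout by the device used elsewhere in Section~\ref{QuasiConvexproof} (fix $\epsilon>0$, take deterministic $T_\epsilon$ with $\mathbb{P}(\tau_\delta\le T_\epsilon)\ge 1-\epsilon$, run all estimates on $\{\tau_\delta\le T_\epsilon\}$, and let $\epsilon\downarrow 0$ after $t\to\infty$). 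I expect the delicate point to be exactly this remainder: one must check that the factor $\int_s^t\alpha_u du$ entering the Gronwall bound is genuinely dominated by the polynomial decay of $\norm{\Phi^{*}_{t,s}}$ and by the a.s.\ decay of $\varepsilon_s$ rather than accumulating, because — unlike the $\bar h(\theta^{\ast})$-terms — it carries the non-averaged factor $H(X_s,\theta_s)$ and must itself be routed through the Poisson representation. Assembling the three groups of terms via the triangle inequality then yields $\norm{\Sigma_t-\bar V_t}\to 0$ in probability.
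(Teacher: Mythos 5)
Your overall strategy coincides with the paper's: write $\Sigma_t-\bar V_t=t\int_1^t\alpha_s^2\Phi_{t,s}H(X_s,\theta_s)\Phi_{t,s}^{\top}ds$ with $H=h-\bar h$, solve the Poisson equation $\mathcal{L}_x w=H$ componentwise, apply It\^{o}'s formula, and show each resulting term vanishes. The boundary and Riemann-integral terms are handled exactly as you describe, by the splitting at $\tau_\delta$ and the bounds on $\Phi_{t,s}$, in parallel with the first part of Lemma \ref{L:MainTermsConvergenceZero}. Where you diverge is the stochastic-integral terms, and that is where your argument has a gap.

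The integrands of those terms contain $\Phi_{t,s}$ (and, after your peeling, $\Phi_{t,s}-\Phi^{*}_{t,s}$), which depend on the path of $\theta$ on $[s,t]$ and are therefore not $\mathcal{F}_s$-adapted. Your treatment of the adapted summand $\int\Phi^{*}_{t,s}\nabla_x w\,\Phi^{*,\top}_{t,s}dW_s$ via the It\^{o} isometry is fine, but for the two remainder summands you bound a $dW_s$ integral by pathwise estimates on its integrand (``splitting the $ds$-integral at a large deterministic $S$, the tail contributes at most $\varepsilon_S\cdot O(1)$''): this controls the quadratic-variation integral $\int\norm{\cdot}^2ds$, not the stochastic integral itself, and for a non-adapted integrand there is no isometry or martingale inequality to pass from one to the other. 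This is precisely the difficulty Lemma \ref{KutoyantsExtensionLemma} was built for: one shows the bracket $\int b\,b^{\top}ds\to 0$ in probability \emph{and} verifies the auxiliary condition on $D_t$, and only then concludes $\int b\,dW\to 0$ in probability. The paper's proof simply routes the entire stochastic-integral term (without peeling off $\Phi^{*}$) through the second part of Lemma \ref{L:MainTermsConvergenceZero}, whose proof invokes Lemma \ref{KutoyantsExtensionLemma}; your Gronwall estimate $\norm{\Phi_{t,s}-\Phi^{*}_{t,s}}\leq K\norm{\Phi^{*}_{t,s}}\varepsilon_s$ is a perfectly good ingredient for verifying the bracket condition, but by itself it does not finish the argument. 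The fix is to drop the peeling altogether and apply the Lemma \ref{L:MainTermsConvergenceZero}/\ref{KutoyantsExtensionLemma} machinery directly to $t\int_1^t\alpha_s^2\Phi_{t,s,i,n}\Phi_{t,s,k',j}\nabla_x w_{n,k'}\,dW_s$, checking both the bracket convergence and the $D_t$ condition as in the proof of Lemma \ref{L:MainTermsConvergenceZero}.
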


By the triangle inequality,
\begin{eqnarray}
\norm{ \Sigma_t - \bar \Sigma} &\leq& \norm{  \Sigma_t - \bar V_t }+ \norm{ \bar V_t - \bar \Sigma_t } + \norm{\bar \Sigma_t - \bar \Sigma}.\notag
\end{eqnarray}

Combining Lemmas \ref{NonConvexLemmaSigmabarV}, \ref{NonConvexLemmabarSigmaAst}, and \ref{NonConvexLemmabarVbarSigma}, $\norm{ \Sigma_t - \bar \Sigma} \overset{p} \rightarrow 0$ as $t \rightarrow \infty$.  Therefore, using the results from \cite{Kutoyants},
\begin{eqnarray}
 \sqrt{t} ( \Gamma_t^{2,2} + \Gamma_t^3 ) \overset{d} \rightarrow \mathcal{N}(0, \bar \Sigma),\notag
\end{eqnarray}
as $t \rightarrow \infty$.

%
%
%
%

Combining all of the results yields the central limit theorem:
\begin{eqnarray*}
\sqrt{t} Y_t \overset{d} \rightarrow \mathcal{N}(0,\bar \Sigma ),
\end{eqnarray*}
which is our desired result.

\subsection{Proof of Lemmas \ref{L:MainTermsConvergenceZero}-\ref{NonConvexLemmaSigmabarV}}
In this subsection we give the proof of the lemmas that were used in the proof of the central limit theorem of the non-convex case.

First we need an intermediate result to properly handle convergence to zero of multidimensional stochastic integrals. Such a result should be standard in the literature, but because we did not manage to locate an exact statement we present Lemma \ref{KutoyantsExtensionLemma}.
\begin{lemma} \label{KutoyantsExtensionLemma}
Let $Z_t = \int_1^t b(t, s,X_s, \theta_s) d W_s$.  Let $p \in \mathbb{N}$ be a given integer, and consider a constant $c>p-\frac{1}{2}$ and a matrix $E$ where $E E^{\top}$ is positive definite,  such that
\begin{eqnarray}
 t^{-c+p-\frac{1}{2} } \int_1^t \alpha_s^{p} s^{c} b(t, s,X_s, \theta_s) E^{\top} ds \overset{p} \rightarrow 0,
 \end{eqnarray}
as $t \rightarrow \infty$.  Then, if $ \int_1^t b(t,s, X_s, \theta_s) b(t, s,X_s, \theta_s)^{\top} ds \overset{p} \rightarrow 0$ as $t \rightarrow \infty$, we have that  $Z_t \overset{p} \rightarrow 0$ as $t \rightarrow \infty$.
\end{lemma}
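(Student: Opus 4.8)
The plan is to prove the vector statement $Z_t\overset{p}{\rightarrow}0$ by a characteristic-function argument. Since an $\mathbb{R}^k$-valued family converges to $0$ in probability if and only if $\lambda^{\top}Z_t\rightarrow 0$ in probability for every $\lambda\in\mathbb{R}^k$, and convergence in distribution to a constant is equivalent to convergence in probability to it, it suffices to show $\mathbb{E}\bigl[e^{i\lambda^{\top}Z_t}\bigr]\rightarrow 1$ for each fixed $\lambda$. Fix $t$ and put $M^{(t)}_u=\int_1^u\lambda^{\top}b(t,s,X_s,\theta_s)\,dW_s$ for $u\in[1,t]$. Using the polynomial growth of $b$ and the uniform-in-time moment bounds $\sup_s\mathbb{E}\norm{X_s}^q<\infty$, $\sup_s\mathbb{E}\norm{\theta_s}^q<\infty$, one checks that $\mathbb{E}\langle M^{(t)}\rangle_t=\mathbb{E}\int_1^t|\lambda^{\top}b(t,s,X_s,\theta_s)|^2\,ds<\infty$, so $M^{(t)}$ is a genuine continuous $L^2$-martingale with $M^{(t)}_t=\lambda^{\top}Z_t$ and $\langle M^{(t)}\rangle_t=\lambda^{\top}\Sigma_t\lambda$, which tends to $0$ in probability by the quadratic-variation hypothesis $\int_1^t b\,b^{\top}ds\overset{p}{\rightarrow}0$.

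The main device is a truncation at the level of the accumulated quadratic variation: for $L>0$ set $\tau^{(t)}_L=\inf\{u\ge 1:\langle M^{(t)}\rangle_u\ge L\}\wedge t$, a stopping time since $u\mapsto\langle M^{(t)}\rangle_u$ is continuous and adapted. Applying It\^{o}'s formula to $e^{iM^{(t)}_{u\wedge\tau^{(t)}_L}}$ and taking expectations removes the stochastic-integral term (its integrand has modulus one and the stopped bracket is bounded by $L$, so it is a true martingale), leaving
\[
\mathbb{E}\bigl[e^{iM^{(t)}_{\tau^{(t)}_L}}\bigr]=1-\frac{1}{2}\,\mathbb{E}\Bigl[\int_1^{\tau^{(t)}_L}e^{iM^{(t)}_s}\,d\langle M^{(t)}\rangle_s\Bigr],
\]
hence $\bigl|\mathbb{E}[e^{iM^{(t)}_{\tau^{(t)}_L}}]-1\bigr|\le L/2$. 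On $\{\langle M^{(t)}\rangle_t\le L\}$ monotonicity and continuity of the bracket force $\tau^{(t)}_L=t$, so there $M^{(t)}_{\tau^{(t)}_L}=\lambda^{\top}Z_t$; on the complement both exponentials have modulus one. Combining these with $\mathbb{P}(\langle M^{(t)}\rangle_t>L)=\mathbb{P}(\lambda^{\top}\Sigma_t\lambda>L)\rightarrow 0$ gives $\limsup_{t\rightarrow\infty}\bigl|\mathbb{E}[e^{i\lambda^{\top}Z_t}]-1\bigr|\le L/2$ for every $L>0$; letting $L\downarrow 0$ yields $\mathbb{E}[e^{i\lambda^{\top}Z_t}]\rightarrow 1$, and since $\lambda$ is arbitrary, $Z_t\overset{p}{\rightarrow}0$.

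Two points deserve care. First, $Z_t=\int_1^t b(t,s,X_s,\theta_s)\,dW_s$ is not the terminal value of one fixed martingale but a family indexed through the integrand $b(t,s,\cdot)$, so the truncation-and-It\^{o} step must be carried out for each $t$ separately and the passage $t\rightarrow\infty$ must rely only on convergence in probability of $\Sigma_t$, not on any $L^1$ or uniform-integrability control of $\int_1^t\norm{b(t,s,X_s,\theta_s)}^2\,ds$; organizing these $t$-uniform estimates and verifying the martingale property of each $M^{(t)}$ from the growth of $b$ and the moment bounds is where I expect the real work to be. Second, the first hypothesis — that $t^{-c+p-1/2}\int_1^t\alpha_s^{p}s^{c}b(t,s,X_s,\theta_s)E^{\top}\,ds\rightarrow 0$ in probability, with $c>p-\tfrac12$ and $EE^{\top}$ positive definite — is the quantitative negligibility condition actually produced when the lemma is applied (for integrands built from $\Phi_{t,s}$, powers of $\alpha_s$, and polynomially growing functions of $(X_s,\theta_s)$, using $\Phi_{t,s}=\Phi_{t,1}\Phi_{1,s}$ and $\norm{\Phi_{t,s}}\le K(s/t)^{CC_\alpha}$); it furnishes the uniform-in-$t$ bookkeeping that makes the truncation argument robust and is precisely the hypothesis under which the stochastic-integral convergence criterion of \cite{Kutoyants} applies — including the nondegenerate version ($\Sigma_t\overset{p}{\rightarrow}\bar\Sigma\neq 0$) invoked elsewhere in the proofs of the central limit theorems.
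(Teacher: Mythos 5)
Your proof is correct, but it takes a genuinely different route from the paper's. The paper proves the lemma by adding an artificial nondegenerate Gaussian perturbation $\tilde Z_t = \eta\sqrt{(2c-2p+1)/C_{\alpha}^{2p}}\, t^{-c+p-1/2}\int_1^t \alpha_s^p s^c E\,dW_s$, invoking Proposition 1.21 of \cite{Kutoyants} to get $Z_t+\tilde Z_t \overset{d}{\rightarrow}\mathcal{N}(0,\eta^2 EE^{\top})$ (this is exactly where the first hypothesis is used: it kills the cross-variation between $Z_t$ and $\tilde Z_t$, while the choice of normalization makes $\langle \tilde Z\rangle_t \rightarrow \eta^2 EE^{\top}$), and then removing the perturbation by a triangle inequality on tail probabilities and letting $\eta\downarrow 0$. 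Your argument — reduce to scalars via $\lambda^{\top}Z_t$, stop the quadratic variation at level $L$, apply It\^{o} to $e^{iM^{(t)}_{u\wedge\tau^{(t)}_L}}$ to get $\bigl|\mathbb{E}[e^{iM^{(t)}_{\tau^{(t)}_L}}]-1\bigr|\le L/2$, and use $\mathbb{P}(\langle M^{(t)}\rangle_t>L)\rightarrow 0$ to transfer this to $\lambda^{\top}Z_t$ — is self-contained, avoids the external CLT reference entirely, and (as you half-observe in your closing remarks but do not quite say outright) never uses the first hypothesis, the constant $c$, the integer $p$, or the matrix $E$ at all: the quadratic-variation hypothesis alone suffices. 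In other words, you have proved a strictly stronger statement: a family of continuous local martingales whose terminal brackets vanish in probability has terminal values vanishing in probability (a fact one could also extract from Lenglart's domination inequality). What the paper's approach buys is uniformity of method — the same Kutoyants proposition is reused for the nondegenerate case $\Sigma_t\overset{p}{\rightarrow}\bar\Sigma\succ 0$ in the CLT itself — while your approach buys economy of hypotheses and a proof that does not depend on the specific power-law structure of $\alpha_s$ and $\Phi_{t,s}$. Your handling of the two delicate points (the $t$-dependence of the integrand, requiring a fresh martingale $M^{(t)}$ for each $t$; and the true-martingale property of the stopped exponential, which follows from the bracket bound $L$ regardless of integrability of $b$) is accurate.
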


\begin{proof}[Proof of Lemma \ref{KutoyantsExtensionLemma}]
Let $\eta>0$ be arbitrarily chosen and construct the random variable
\[
\tilde Z_t =\eta \sqrt{\frac{2c-2p+1}{C_{\alpha}^{2p}}}  t^{-c+p-\frac{1}{2}} \int_1^t  \alpha_s^{p} s^c E d W_s.
\]

From Proposition 1.21 in Section 1.2.2  of \cite{Kutoyants}, $Z_t + \tilde Z_t \overset{d} \rightarrow \mathcal{N}(0, \eta^2 E E^{\top} )$ as $t \rightarrow \infty$.  In addition, $ \tilde Z_t \overset{d} \rightarrow \mathcal{N}(0, \eta^2 E E^{\top} )$ as $t \rightarrow \infty$.  Then, from the triangle inequality,
\begin{eqnarray}
& &\mathbb{P} \bigg{[} \norm{ Z_t }^2 > \epsilon  \bigg{]} \leq  \sum_{i=1}^k \mathbb{P} \bigg{[} Z_{t,i}^2 >  \frac{\epsilon}{k} \bigg{]} = \sum_{i=1}^k \mathbb{P} \bigg{[}  | Z_{t,i} | > \sqrt{ \frac{\epsilon}{k} }    \bigg{]} =  \sum_{i=1}^k \mathbb{P} \bigg{[}  | Z_{t,i} + \tilde Z_{t,i} - \tilde Z_{t,i} | > \sqrt{ \frac{\epsilon}{k} }    \bigg{]}   \notag \\
&\leq&  \sum_{i=1}^k \mathbb{P} \bigg{[}  | \tilde Z_{t,i} | + | Z_{t,i} + \tilde Z_{t,i} | > \sqrt{ \frac{\epsilon}{k} }  \bigg{]}   \notag \\
&\leq&  \sum_{i=1}^k \bigg{(} \mathbb{P} \bigg{[}  | \tilde Z_{t,i} |  > \frac{1}{2} \sqrt{ \frac{\epsilon}{k} }   \bigg{]} +  \mathbb{P} \bigg{[}  |  Z_{t,i} + \tilde Z_{t,i} |  > \frac{1}{2} \sqrt{ \frac{\epsilon}{k} }  \bigg{]} \bigg{)}.
\label{tildeZinequality}
\end{eqnarray}
For each fixed $\eta>0$, the RHS of the inequality (\ref{tildeZinequality}) converges to a finite quantity as $t\rightarrow\infty$ due to the continuous mapping theorem and the convergence in distribution of $ Z_{t,i} + \tilde Z_{t,i}$ and $\tilde Z_t$.  Furthermore, the limit of the RHS can be made arbitrarily small by choosing a sufficiently small $\eta$.  Therefore, for any $\delta > 0$, there exists a $\eta > 0$ such that:
\begin{eqnarray*}
\textcolor{black}{\lim \sup_{t \rightarrow \infty} } \mathbb{P} \bigg{[} \norm{ Z_t }^2 > \epsilon  \bigg{]} < \delta.
\end{eqnarray*}
\end{proof}
\begin{proof}[Proof of Lemma \ref{L:MainTermsConvergenceZero}]
Let us first prove the first statement of the lemma. Without loss of generality, let $t \geq \tau_{\delta}$. To begin, divide $[0,t]$ into two regimes $[1,  \tau_{\delta}\wedge t ]$ and $[\tau_{\delta}\wedge t, \tau_{\delta}\vee t ]$:
\begin{align}
I_{t}^{1}&=\sqrt{t} \int_1^t \alpha_s^2 \Phi_{t,s} \zeta(X_s, \theta_s) ds  = \sqrt{t} \int_1^{ \tau_{\delta}\wedge t } \alpha_s^2 \Phi_{t,s} \zeta(X_s, \theta_s) ds  + \sqrt{t} \int_{\tau_{\delta}\wedge t}^{\tau_{\delta}\vee t} \alpha_s^2 \Phi_{t,s} \zeta(X_s, \theta_s) ds.\nonumber\\
&=I_{t}^{1,1}+I_{t}^{1,2}.
\label{Riemann1}
\end{align}
Let us first study the second term.
\begin{align}
\norm{I_{t}^{1,2}}_1 &= \norm{ \sqrt{t} \int_{\tau_{\delta}\wedge t}^{\tau_{\delta}\vee t} \alpha_s^2 \Phi_{t,s} \zeta(X_s, \theta_s) ds }_1 \leq K \sqrt{t} \int_{\tau_{\delta}\wedge t}^{\tau_{\delta}\vee t} \alpha_s^2  \norm{ \Phi_{t,s}} \norm{ \zeta(X_s, \theta_s) } ds \notag \\
&\leq K t^{- C C_{\alpha} +1/2} \int_{\tau_{\delta}\wedge t}^{\tau_{\delta}\vee t} s^{ C C_{\alpha} - 2} \left(1 + \norm{X_s}^{m_{1}}+\norm{\theta_s}^{m_{2}}\right) ds  \notag \\
&\leq K t^{- C C_{\alpha} +1/2} \int_{1}^{\tau_{\delta}\vee t} s^{ C C_{\alpha} - 2} \left(1 + \norm{X_s}^{m_{1}}+\norm{\theta_s}^{m_{2}}\right) ds.\notag
\end{align}

Let us now define the quantity
\begin{align*}
L_{t}&=K t^{- C C_{\alpha} +1/2} \int_{1}^{t} s^{ C C_{\alpha} - 2} \left(1 + \norm{X_s}^{m_{1}}+\norm{\theta_s}^{m_{2}}\right) ds,
\end{align*}
and notice that with probability one we have
\begin{eqnarray*}
 \limsup_{t \rightarrow \infty} \norm{I_t^{1,2}}_1 \leq \limsup_{t \rightarrow \infty} L_t.
\end{eqnarray*}

For $\varepsilon>0$, consider now the event $A_{t,\varepsilon}=\left\{L_{t}\geq t^{\varepsilon-1/2}\right\}$. Using the uniform in time bounds for the moments of $X_{s}$ and $\theta_{s}$ we obtain that
\begin{align*}
\mathbb{E} \left| L_{t}\right|&\leq Kt^{- C C_{\alpha} +1/2} \int_{1}^{t} s^{ C C_{\alpha} - 2}ds\leq K\left(t^{-1/2}-t^{-CC_{\alpha}+1/2}\right).
\end{align*}

Then, Markov's inequality and the fact that $CC_{\alpha}>1$ give
\begin{align*}
\mathbb{P}\left[A_{t,\varepsilon}\right]&\leq\frac{\mathbb{E}\left|L_{t}\right|}{t^{\varepsilon-1/2}}\leq  Kt^{-\varepsilon}.
\end{align*}

The latter then implies that
\begin{eqnarray*}
\sum_{n \in \mathbb{N} } \mathbb{P}\left[ A_{2^n,\varepsilon} \right]  < \infty,
\end{eqnarray*}
 which then due to the Borel-Cantelli lemma, guarantees the existence of a finite positive random variable $d(\omega)$ and of some $n_0 < \infty$ such that for every $n \geq n_0$,
\begin{eqnarray*}
L_{2^n} \leq d( \omega) 2^{- n\varepsilon}.
\end{eqnarray*}

For any $t \in [2^n, 2^{n+1} ]$ and $n \geq n_0$,
\begin{eqnarray}
L_{t} & \leq & K 2^{n (- C C_{\alpha}  + 1/2)}  \int_{1}^{2^{n+1} } s^{ C C_{\alpha}  -2} (1 + \norm{X_s}^{m_{1}}+\norm{\theta_{s}}^{m_{2}})    ds. \notag \\
&\leq & K 2^{C C_{\alpha}  - 1/2}  L_{2^{n+1} }  \notag \\
&\leq& K 2^{C C_{\alpha}  - 1/2}   d( \omega) 2^{- (n+1)\varepsilon } \notag \\
&\leq& K(\omega)  t^{-\varepsilon }.\notag
\end{eqnarray}

Therefore, $L_t \overset{a.s.} \rightarrow 0$, which immediately implies that $I_t^{1,2} \overset{a.s.} \rightarrow 0$. Next, we analyze the first term on the RHS of equation (\ref{Riemann1}).  If $t > \tau_{\delta}$, the semi-group property yields:
\begin{eqnarray*}
 I_{t}^{1,1}&=\sqrt{t} \int_1^{ \tau_{\delta}\wedge t } \alpha_s^2 \Phi_{t,s} \zeta(X_s, \theta_s) ds  =  \sqrt{t}\Phi_{t,\tau_{\delta} }  \int_1^{ \tau_{\delta}\wedge t } \alpha_s^2  \Phi_{\tau_{\delta},s} \zeta(X_s, \theta_s) ds.
 \end{eqnarray*}

 Therefore, if $t > \tau_{\delta}$,
 \begin{align}
 \norm{ \sqrt{t} \int_1^{ \tau_{\delta}\wedge t } \alpha_s^2 \Phi_{t,s} \zeta(X_s, \theta_s) ds } &\leq   K \sqrt{t}  \norm{ \Phi_{t,\tau_{\delta} } }  \norm{ \int_1^{ \tau_{\delta}} \alpha_s^2  \Phi_{\tau_{\delta},s} \zeta(X_s, \theta_s) ds } \leq C(\tau_{\delta} ) t^{- C C_{\alpha} + 1/2}.\notag
 \end{align}

 The constant $C(\tau_{\delta})$ is almost surely finite since $\mathbb{P} [ \tau_{\delta} < \infty] = 1$ and because $\norm{X_{s}}$ and $\norm{\theta_{s}}$ are almost surely finite for $s\leq \tau_{\delta}$.  Therefore, using the constraint $CC_{\alpha}>1$, we have obtained
 \begin{eqnarray}
 \norm{ \sqrt{t} \int_1^{ \tau_{\delta}\wedge t } \alpha_s^2 \Phi_{t,s} \zeta(X_s, \theta_s) ds } \overset{a.s.} \rightarrow 0.\notag
 \end{eqnarray}

 Combining results, the integral (\ref{Riemann1}) converges to $0$ almost surely as $t \rightarrow \infty$.

Next, we prove the second statement of the lemma, where we shall use Lemma \ref{KutoyantsExtensionLemma}. In the notation of Lemma \ref{KutoyantsExtensionLemma}, let us set $b(t,s,x,\theta)=\sqrt{t}\alpha_{s}^{2}\Phi_{t,s}\zeta(x,\theta)$ and, with some abuse of notation, let us consider the quantity
\begin{align*}
[I_{t}^{2}]&=t\int_{1}^{t}\alpha_{s}^{4}\Phi_{t,s}\zeta(X_{s},\theta_{s})\zeta^{T}(X_{s},\theta_{s})\Phi_{t,s}^{T}ds.
\end{align*}

Now $[I_{t}^{2}]$ is a matrix with elements
\begin{align*}
[I_{t}^{2}]_{i,j}&=t\int_{1}^{t}\alpha_{s}^{4}\sum_{n}\Phi_{t,s,i,n}\sum_{k'=1}^{k}\left(\zeta\zeta^{\top}\right)_{n,k'}(X_{s},\theta_{s})\Phi_{t,s,k',j}ds\nonumber\\
&=t\int_{1}^{\tau_{\delta}\wedge t}\alpha_{s}^{4}\sum_{n}\Phi_{t,s,i,n}\sum_{k'=1}^{k}\left(\zeta\zeta^{\top}\right)_{n,k'}(X_{s},\theta_{s})\Phi_{t,s,k',j}ds
+t\int_{\tau_{\delta}\wedge t}^{\tau_{\delta}\vee t}\alpha_{s}^{4}\sum_{n}\Phi_{t,s,i,n}\sum_{k'=1}^{k}\left(\zeta\zeta^{\top}\right)_{n,k'}(X_{s},\theta_{s})\Phi_{t,s,k',j}ds\nonumber\\
&=[I_{t}^{2}]^{1}_{i,j}+[I_{t}^{2}]^{2}_{i,j}.
\end{align*}

Proceeding as in the first part of the lemma shows that for each index $i,j$, the corresponding element of the matrix $[I_{t}^{2}]$ goes to zero, i.e.,
\begin{align*}
[I_{t}^{2}]_{i,j}&=[I_{t}^{2}]^{1}_{i,j}+[I_{t}^{2}]^{2}_{i,j}\rightarrow 0, \text{ a.s., as  } t\rightarrow \infty.
\end{align*}

To complete the proof, we then need to use Lemma \ref{KutoyantsExtensionLemma}. Let us define
\begin{align*}
D_t &= t^{-c +p-\frac{1}{2}}    \int_{1}^{t } \alpha_s^{2 + p} s^c  \sqrt{t} \Phi_{t,s} \zeta(X_{s},\theta_{s}) E^{\top} ds    \notag \\
&= t^{-c+p}    \int_{1}^{\tau_{\delta}\wedge t }  \alpha_s^{2 + p} s^c   \Phi_{t,s}  \zeta (X_s, \theta_s ) E^{\top} ds  + t^{-c+p}     \int_{\tau_{\delta}\wedge t }^{\tau_{\delta}\vee t }   \alpha_s^{2+p} s^c   \Phi_{t,s}\zeta(X_s, \theta_s ) E^{\top} ds.  \notag \\
&= D_t^1 + D_t^2.
\end{align*}

By Lemma \ref{KutoyantsExtensionLemma}, if we show that for the appropriate choices of $p,c$ and $E$, $D_{t}$ goes to zero in probability as $t\rightarrow\infty$, then we would have shown that the second statement of the lemma holds, i.e. that $I_{t}^{2}\rightarrow 0$ in probability as $t\rightarrow\infty$.

We pick $p=2$, $c=2CC_{\alpha}$ and $E=I$ to be the identity matrix. Let us first start with $D_{t}^{2}$. Notice that for the $(i,j)$ element of the matrix we have
\begin{align}
 \norm{D_t^2}_1  &\leq K t^{-2C C_{\alpha} +2}    \int_{\tau_{\delta}\wedge t }^{\tau_{\delta}\vee t }   \alpha_s^4 s^{2 C C_{\alpha}}   \norm{ \Phi_{t,s} \zeta(X_s, \theta_s )} ds  \notag \\
&\leq  K t^{-3C C_{\alpha} +2}    \int_{\tau_{\delta}\wedge t }^{\tau_{\delta}\vee t }  s^{3 C C_{\alpha}-4}  (1 + \norm{X_{s}}^{m_{1}}+\norm{\theta_{s}}^{m_{2}} ) ds. \notag
\end{align}

It is clear that from this point the analysis of $D_t^2$ is identical to the analysis of $I_t^{1,2}$.  In particular, define the quantity
\begin{align*}
\hat{L}_{t}&=K t^{-3C C_{\alpha} +2}    \int_{1 }^{t }  s^{3 C C_{\alpha}-4}  (1 + \norm{X_{s}}^{m_{1}}+\norm{\theta_{s}}^{m_{2}} ) ds,
\end{align*}
and, for $\varepsilon>0$, consider the event $\hat{A}_{t,\varepsilon}=\left\{ \hat L_{t}\geq t^{\varepsilon-1}\right\}$. Using Markov's inequality and the uniform in time bounds for the moments of $X_{s}$ and $\theta_{s}$ we obtain that

\begin{align*}
\mathbb{P}\left[\hat{A}_{t,\varepsilon}\right]&\leq\frac{\mathbb{E}\left|\hat L_{t}\right|}{t^{\varepsilon-1}}\leq K \frac{t^{-3C C_{\alpha} +2}\int_{1 }^{t }  s^{3 C C_{\alpha}-4}ds  }{t^{\varepsilon-1}}\leq K t^{-\varepsilon}.
\end{align*}

From here on, the rest of the argument follows the Borel-Cantelli argument that was used for the proof of the first part of the lemma. This yields that $D_t^2 \overset{a.s.} \rightarrow 0$ as $t \rightarrow \infty$.

Next, using the semi-group property and for $\tau_{\delta}<t$, $D_t^1$ can be re-written as:
\begin{align}
D_t^1 &=  t^{-2CC_{\alpha} + 2 } \int_{1}^{\tau_{\delta} }  \alpha_s^4 s^{2 CC_{\alpha}}  \Phi_{t,\tau_{\delta}} \Phi_{\tau_{\delta},s}  \zeta (X_s, \theta_s )  ds =t^{-2CC_{\alpha} + 2 } \Phi_{t,\tau_{\delta}} \int_{1}^{\tau_{\delta} }  \alpha_s^4 s^{2 CC_{\alpha}}   \Phi_{\tau_{\delta},s}  \zeta (X_s, \theta_s )  ds.\nonumber
\end{align}

Therefore, by similar logic as for the term $I_{t}^{1,1}$ , we obtain
\begin{eqnarray*}
\norm{ D_t^1} &\leq&  C(\tau_{\delta}) t^{-3 C C_{\alpha} +2 },
\end{eqnarray*}
where $C(\tau_{\delta})$ is almost surely finite.  Therefore, since $CC_{\alpha}>1$, $D_t^1 \overset{a.s.} \rightarrow 0$ as $t \rightarrow \infty$.  Consequently, as $t \rightarrow \infty$, we have indeed obtained
\begin{eqnarray*}
D_t = D_t^1 + D_t^2 \overset{a.s.} \rightarrow 0,
\end{eqnarray*}
which by Lemma \ref{KutoyantsExtensionLemma} implies that the second statement of the lemma is true. This concludes the proof of the lemma.
\end{proof}
\begin{proof}[Proof of Lemma \ref{NonConvexLemmabarSigmaAst}]
$\Phi_{t,s}$ can be expressed in terms of $\Phi_{t,s}^{\ast}$. To see this, first perform a Taylor expansion:
\begin{eqnarray}
d \Phi_{t,s} &=& - \alpha_t \Delta \bar g( \theta^{\ast}) \Phi_{t,s} dt - \alpha_t C_t Y_t  \Phi_{t,s} dt, \qquad
\Phi_{s,s} = I,\notag
\end{eqnarray}
$C_t$ are the third-order partial derivatives of $\bar g (\theta)$ and, based on the imposed assumptions, they are uniformly bounded. Therefore,
\begin{eqnarray}
\Phi_{t,s} = \Phi_{t,s}^{\ast} - \int_s^t \alpha_u \Phi_{t,u}^{\ast} C_u Y_u \Phi_{u,s} du.\notag
\end{eqnarray}

Define $\xi_{t,s} = \int_s^t \alpha_u \Phi_{t,u}^{\ast} C_u Y_u \Phi_{u,s} du$.  Recall that there exists an almost surely finite  random time $\tau_{\delta}$ such that $\norm{ Y_t }  < \delta $ for all $t \geq \tau_{\delta}$ and any $\delta$ small enough.  From the bounds on $\Phi_{t,s}$ and $\Phi_{t,s}^{\ast}$, we have that for $t>s > \tau_{ \delta}$ where $\delta$ small enough,
\begin{eqnarray}
\norm{ \xi_{t,s} }^2 &\leq&  t \int_s^t \alpha_u^2 \norm{ \Phi_{t,u}^{\ast} C_u Y_u \Phi_{u,s} }^2  du \notag \\
&\leq& t  K \int_s^t \alpha_u^2 \norm{ \Phi_{t,u}^{\ast} }^2 \norm{Y_u}^2 \norm{ \Phi_{u,s} }^2 du \notag \\
&\leq& K \delta^2  t^{-2 C C_{\alpha}  +1 } s^{2 C C_{\alpha}}   \int_s^t  u^{-2}  du \notag \\
&\leq& K \delta^2 t^{-2C C_{\alpha} } s^{2C C_{\alpha}}.\notag
\label{XiBound0001}
\end{eqnarray}

We have used the Cauchy-Schwartz inequality in the first inequality above.  Now, consider the case where $t> \tau_{\delta}$ but $s < \tau_{\delta}$.
\begin{eqnarray}
\xi_{t,s} &=& \int_s^t \alpha_u \Phi_{t,u}^{\ast} C_u Y_u \Phi_{u,s} du \notag \\
&=& \int_s^{\tau_{\delta}} \alpha_u \Phi_{t,u}^{\ast} C_u Y_u \Phi_{u,s} du + \int_{\tau_{\delta}}^{t} \alpha_u \Phi_{t,u}^{\ast} C_u Y_u \Phi_{u,s} du \notag \\
&=& \Phi_{t, \tau_{\delta}}^{\ast} \int_s^{\tau_{\delta}} \alpha_u \Phi_{\tau_{\delta}, u}^{\ast} C_u Y_u \Phi_{u,s} du  + \xi_{t, \tau_{\delta}}. \notag
\end{eqnarray}

From the previous bound, $\norm{\xi_{t, \tau_{\delta}}} \leq K \delta t^{-C C_{\alpha} } \tau_{\delta}^{C C_{\alpha}}$.  Therefore, using the fact that $\tau_{\delta}$ is almost surely finite, the bounds on $ \Phi_{t, \tau_{\delta}}^{\ast} $ and $\xi_{t, \tau_{\delta}}$, and the triangle inequality,
\begin{eqnarray}
\norm{\xi_{t,s} } \leq C(\tau_{\delta})  t^{-C C_{\alpha} } ,  \notag
\end{eqnarray}
when  $t> \tau_{\delta}$ but $s < \tau_{\delta}$.

We will use these results to derive the limit of $\bar \Sigma_t$ as $t \rightarrow \infty$.
\begin{eqnarray}
\bar \Sigma_t &=& t \int_1^t \alpha_s^2 \Phi_{t,s} \bar h(\theta^{\ast} )      \Phi_{t,s}^{\top} ds \notag \\
&=& t \int_1^t \alpha_s^2 \Phi_{t,s}^{\ast} \bar h(\theta^{\ast} )      \Phi_{t,s}^{\ast, \top} ds -t \int_1^t \alpha_s^2 \xi_{t,s} \bar h(\theta^{\ast} ) \Phi_{t,s}^{\top} ds -t \int_1^t \alpha_s^2 \Phi_{t,s}^{\ast} \bar h(\theta^{\ast} ) \xi_{t,s}^{\top} ds \notag \\
&=&  \bar \Sigma^{\ast}_t -t \int_1^t \alpha_s^2 \xi_{t,s} \bar h(\theta^{\ast} ) \Phi_{t,s}^{\top} ds -t \int_1^t \alpha_s^2 \Phi_{t,s}^{\ast} \bar h(\theta^{\ast} ) \xi_{t,s}^{\top} ds.
\label{barSigma55}
\end{eqnarray}

Let's consider the second term in (\ref{barSigma55}).  Without loss of generality, assume $t > \tau_{\delta}$.
\begin{eqnarray}
\norm{ t \int_{1}^t \alpha_s^2 \xi_{t,s} \bar h(\theta^{\ast} ) \Phi_{t,s}^{\top} ds}^2 &\leq& t^3 \int_{\tau_{\delta}}^t \norm{ \alpha_s^2 \xi_{t,s} \bar h(\theta^{\ast} ) \Phi_{t,s}^{\top} }^2 ds  +  t^3 \int_{1}^{\tau_{\delta}} \norm{\alpha_s^2 \xi_{t,s} \bar h(\theta^{\ast} ) \Phi_{t,s}^{\top} }^2 ds  \notag \\
&\leq& t^3 \int_{\tau_{\delta}}^t \norm{ \alpha_s^2 \xi_{t,s} \bar h(\theta^{\ast} ) \Phi_{t,s}^{\top} }^2 ds  +  t^3 \int_{1}^{\tau_{\delta}} \norm{\alpha_s^2 \xi_{t,s} \bar h(\theta^{\ast} ) \Phi_{\tau_{\delta},s}^{\top} \Phi_{t, \tau_{\delta}}^{\top} }^2 ds  \notag \\
&\leq& K \delta^2 t^{- 4 C C_{\alpha} + 3} \int_{\tau_{\delta}}^t  s^{4 C C_{\alpha} - 4} ds   + C(\tau_{\delta} ) t^{- 4 C C_{\alpha}  +3 } \notag \\
&\leq& K  \delta^2   + C(\tau_{\delta} ) t^{- 4 C C_{\alpha}  +3 }.
\label{XIbound}
\end{eqnarray}

$C(\tau_{\delta} ) $ is almost surely finite since $\tau_{\delta}$ is almost surely finite. The third term in (\ref{barSigma55}) is similar. Using (\ref{barSigma55}) and the bound (\ref{XIbound}), we have that with probability one:
\begin{eqnarray}
\limsup_{t \rightarrow \infty} \norm{\bar \Sigma_t  - \bar \Sigma^{\ast}_{t} } &\leq& K \delta. \notag
\end{eqnarray}
This implies that $\lim_{t \rightarrow \infty} \norm{\bar \Sigma_t  - \bar \Sigma^{\ast}_{t} }  = 0$  since $\delta$ can be as small as we want. This concludes the proof of the lemma.
\end{proof}
\begin{proof}[Proof of Lemma \ref{NonConvexLemmabarVbarSigma}]
\begin{eqnarray}
 \norm{ \bar V_t - \bar \Sigma_t } &=& \norm{ t \int_1^t \alpha_s^2 \big{(} \Phi_{t,s} \bar h(\theta_s) \Phi_{t,s}^{\top} -  \Phi_{t,s} \bar h(\theta^{\ast}) \Phi_{t,s}^{\top} \big{)} ds}.\notag
  \end{eqnarray}
  Using the semi-group property,
  \begin{eqnarray}
 \bar V_t - \bar \Sigma_t &=&t \int_1^{\tau_{\delta}} \alpha_s^2 \bigg{(}  \Phi_{t,\tau_{\delta} } \Phi_{\tau_{\delta},s} \bar h(\theta_s) \Phi_{\tau_{\delta},s}^{\top} \Phi_{t,\tau_{\delta} }^{\top}  -  \Phi_{t,\tau_{\delta} } \Phi_{\tau_{\delta},s}  \bar h(\theta^{\ast}) \Phi_{\tau_{\delta},s}^{\top} \Phi_{t,\tau_{\delta} }^{\top} \bigg{)} ds \notag \\
 &+& t \int_{ \tau_{\delta} }^{t} \alpha_s^2 \bigg{(} \Phi_{t,s} \bar h(\theta_s) \Phi_{t,s}^{\top} -  \Phi_{t,s} \bar h(\theta^{\ast}) \Phi_{t,s}^{\top} \bigg{)} ds.\notag
  \end{eqnarray}

The $(i,j)$-th element of the matrix $\bar V_t - \bar \Sigma_t$ is:
  \begin{eqnarray*}
 \big{(}  \bar V_t - \bar \Sigma_t  \big{)}_{i,j} &=&  \sum_{m, \ell, n, k' = 1}^k  t \int_{1}^{\tau_{\delta}} \alpha_s^2   \Phi_{t,\tau_{\delta},i,m} \Phi_{\tau_{\delta},s,m, \ell}   \nabla_{\theta}\bar h(\theta_s^1)_{\ell,  n}^{\top} ( \theta_s - \theta^{\ast}) \Phi_{\tau_{\delta},s,n,k'} \Phi_{t, \tau_{\delta},k',j}  \notag \\
 &+& t \int_{\tau_{\delta}}^t \alpha_s^2   \sum_{n, k'} \Phi_{t,s,i,n}   \nabla_{\theta} \bar h(\theta_s^1)_{n,  k'}^{\top} ( \theta_s - \theta^{\ast}) \Phi_{t,s,k',j}  ds.
  \end{eqnarray*}

By the Cauchy-Schwartz inequality and the bound $\norm{\nabla_{\theta} \bar h(\theta)}\leq K(1+\norm{\theta})$ we have
\begin{eqnarray}
&& \bigg{(} t \int_{\tau_{\delta}}^t \alpha_s^2   \sum_{n, k'} \Phi_{t,s,i,n}   \nabla_{\theta} \bar h(\theta_s^1)_{n,  k'}^{\top} ( \theta_s - \theta^{\ast}) \Phi_{t,s,k',j} ds \bigg{)}^2 \notag \\
&\leq& t^3 \int_{\tau_{\delta}}^t \alpha_s^4 \bigg{(}  \sum_{n, k'} \Phi_{t,s,i,n}   \nabla_{\theta} \bar h(\theta_s^1)_{n,  k'}^{\top} ( \theta_s - \theta^{\ast}) \Phi_{t,s,k',j} \bigg{)}^2 ds \notag \\
 &\leq& K \delta^2 t^{- 4 C C_{\alpha} + 3} \int_{\tau_{\delta}}^t  s^{4 C C_{\alpha} - 4} ds   \notag \\
 &\leq& K  \delta^2.
 \label{BoundCS1}
 \end{eqnarray}
We also have that:
 \begin{eqnarray}
 && \bigg{(}   t \int_{1}^{\tau_{\delta}} \alpha_s^2   \sum_{m, \ell, n, k' = 1}^k \Phi_{t,\tau_{\delta},i,m} \Phi_{\tau_{\delta},s,m, \ell} \nabla_{\theta}  \bar h(\theta_s^1)_{\ell,  n}^{\top} ( \theta_s - \theta^{\ast}) \Phi_{\tau_{\delta},s,n,k'}^{\top} \Phi_{t, \tau_{\delta},k',j}^{\top} ds  \bigg{)}^2 \notag \\
 &\leq&  t^3   \int_{1}^{\tau_{\delta}} \alpha_s^4  \bigg{(} \sum_{m, \ell, n, k' = 1}^k    \Phi_{t,\tau_{\delta},i,m} \Phi_{\tau_{\delta},s,m, \ell} \nabla_{\theta}  \bar h(\theta_s^1)_{\ell,  n}^{\top} ( \theta_s - \theta^{\ast}) \Phi_{\tau_{\delta},s,n,k'}  \Phi_{t, \tau_{\delta},k',j}  \bigg{)}^2 ds \notag \\
 &\leq& K t^3 \sum_{m, \ell, n, k' = 1}^k  \bigg{(} \Phi_{t, \tau_{\delta},k',j} \Phi_{t,\tau_{\delta},i,m} \bigg{)}^2     \int_1^{\tau_{\delta}}  \alpha_s^4   \Phi_{\tau_{\delta},s,m, \ell}^2 \bigg{(} \nabla_{\theta} \bar h(\theta_s^1)_{\ell,  n}^{\top} ( \theta_s - \theta^{\ast}) \bigg{)}^2 \Phi_{\tau_{\delta},s,n,k'}^2 ds \notag \\
 &\leq& C(\tau_{\delta} ) t^{-4 C C_{\alpha} + 3}.
 \label{BoundCS2}
 \end{eqnarray}
 Since $\tau_{\delta} < \infty$ with probability $1$, $C(\tau_{\delta})$ is also finite with probability $1$.  Combining the results from equations (\ref{BoundCS1}) and (\ref{BoundCS2}),
 \begin{eqnarray*}
  \big{(}  \bar V_t - \bar \Sigma_t  \big{)}_{i,j} \leq K  \delta^2   + C(\tau_{\delta} ) t^{-4 C C_{\alpha} + 3}.
 \end{eqnarray*}

Therefore, we have that $ \limsup_{t\rightarrow\infty}\norm{ \bar V_t - \bar \Sigma_t } \leq K \delta^{2}$. Since $\delta$ is arbitrarily small, we have that $ \big{(}  \bar V_t - \bar \Sigma_t  \big{)}_{i,j}  \overset{a.s.} \rightarrow 0$, concluding the proof of the lemma.
\end{proof}
\begin{proof}[Proof of Lemma \ref{NonConvexLemmaSigmabarV}]
Notice that we have
\begin{eqnarray}
\Sigma_t - \bar V_t &=& t \int_1^t \alpha_s^2 \bigg{(} \Phi_{t,s} h(\theta_s, X_s)    \Phi_{t,s}^{\top}  - \Phi_{t,s} \bar h(\theta_s)    \Phi_{t,s}^{\top}   \bigg{)} ds.
\label{Poisson0001}
\end{eqnarray}
and consequently the $(i,j)$-th element of the matrix $\Sigma_t - \bar V_t$ is:
\begin{eqnarray*}
 \big{(}  \bar V_t - \bar \Sigma_t  \big{)}_{i,j} &=&  t \int_1^t \alpha_s^2   \sum_{n} \Phi_{t,s,i,n}  \sum_{k'=1}^k \bigg{(} h(\theta_s, X_s)_{n,  k'} -  \bar h(\theta_s)_{n,  k'} \bigg{)}\Phi_{t,s,k',j}^{\top} ds.   \notag
\end{eqnarray*}

Let $w_{n,k'}(x, \theta)$ be the solution to the Poisson equation $\mathcal{L} w_{n,k'}(x,\theta) = h(\theta, x)_{n,  k'} -  \bar h(\theta)_{n,  k'}$.  The solution $w_{n,k'}(t,\theta)$ and its relevant partial derivatives will grow at most polynomially with respect to $\theta$ and $x$ due to the assumptions of Theorem \ref{T:MainTheorem3}.  The next step is to rewrite the difference $\big{(}  \bar V_t - \bar \Sigma_t  \big{)}_{i,j}$ using that Poisson equation and It\^{o}'s formula and then to show that each term on the right hand side of the resulting equation goes to zero.

For example we shall have that
\begin{eqnarray}
&t& \int_1^t \alpha_s^2   \sum_{n} \Phi_{t,s,i,n}  \sum_{k'=1}^k \bigg{(} h(\theta_s, X_s)_{n,  k'} -  \bar h(\theta_s)_{n,  k'} \bigg{)} \Phi_{t,s,k',j} ds \notag \\
&=&  t \int_1^t \alpha_s^2   \sum_{n} \Phi_{t,s,i,n}  \sum_{k'=1}^k  \Phi_{t,s,k',j}  \nabla_x w_{n, k'}(X_s, \theta_s )^{\top} d W_s + (\ast),
\end{eqnarray}
where $(\ast)$ is a collection of Riemann integrals resulting from the application of It\^{o}'s formula. Now each of these terms can be shown to go to zero with an argument exactly parallel to Lemma \ref{L:MainTermsConvergenceZero}. Due to the similarity of the argument, the details are omitted.

\end{proof}

\section{Convergence Analysis and Insights} \label{ConvergenceAnalysis}
The central limit theorem provides an important theoretical guarantee for the performance of the SGDCT algorithm developed in \cite{SGDCT1}.  Theorem  \ref{T:MainTheorem3} is particularly significant since it shows that the asymptotic convergence rate of $t^{- \frac{1}{2}}$ even holds for a certain class of non-convex models.  This is important since many models are non-convex.

In addition, the analysis yields insight into the behavior of the algorithm and provides guidance on selecting the optimal learning rate for numerical performance.  The regime where the central limit theorem holds with the optimal rate $\sqrt{t}$ is $C_{\alpha} C > 1$.  $C_{\alpha}$ is the magnitude of the learning rate.  For example, take $\alpha_t = \frac{C_{\alpha}}{C_0 + t}$.  Therefore, the learning rate magnitude must be chosen sufficiently large in order to achieve the optimal rate of convergence.  The larger the constant $C$ is, the steeper the function $\bar g (\theta)$ is around the global minimum $\theta^{\ast}$.  The smaller the constant $C$ is, the smaller the function $\bar g (\theta)$ is around the global minimum $\theta^{\ast}$.  The ``flatter" the region around the global minimum point, the larger the learning rate magnitude must be.  If the region around the global minimum point is steep, the learning rate magnitude can be smaller.


The condition of $C_{\alpha} C > 1$ to ensure the convergence rate of $t^{- \frac{1}{2}}$ is not specific to the SGDCT algorithm, but is in general a characteristic of continuous-time statistical learning algorithms.  The convergence rate of any continuous-time gradient descent algorithm with a decaying learning rate will depend upon the learning rate magnitude $C_{\alpha}$.  Consider the deterministic gradient descent algorithm
\begin{eqnarray*}
\frac{d \theta_t}{d t} = - \alpha_t \nabla_{\theta} \textcolor{black}{ \bar g (\theta_t)}.
\end{eqnarray*}
Let $\alpha_t = \frac{C_{\alpha}}{C_0 + t}$ and assume $\bar g (\theta)$ is strongly convex.  Then $\norm{ \theta_t - \theta^{\ast} } \leq K t^{- C C_{\alpha}  }$.
Note that the convergence rate depends entirely upon the choice of the learning rate magnitude $C_{\alpha}$.  If $C_{\alpha}$ is very small, the deterministic gradient descent algorithm will even converge at a rate much smaller than $t^{-\frac{1}{2}}$.

$t^{-\frac{1}{2}}$ is the fastest possible convergence rate given that the noise in the system (\ref{ClassofEqns}) is a Brownian motion.  This is due to the variance of a Brownian motion growing linearly in time.  However, other types of noise with variances which grow sub-linearly in time could allow for a faster rate of convergence than $t^{- \frac{1}{2}}$.  An example of a stochastic process whose variance grows sub-linearly in time is a fractional Brownian motion with appropriately chosen Hurst parameter.  Analyzing the convergence rate under more general types of noise would be a very interesting topic for future research.

In the central limit theorem result, we are also able to precisely characterize the asymptotic variance-covariance matrix $\bar \Sigma=\left[\bar\Sigma_{i,j}\right]_{i,j=1}^{k}$, with
\begin{eqnarray*}
\bar \Sigma_{i,j} =  \sum_{n}\sum_{m} u_{i,m}u_{n,m}\sum_{k'} \bar h(\theta^{\ast})_{n,  k'} \sum_{m'} \left[ \frac{C_{\alpha}^{2}}{\left(\lambda_{m}+\lambda_{m'}\right)C_{\alpha}-1}\right] u_{j,m'}u_{k',m'}
\end{eqnarray*}
and
\begin{equation*}
\bar{\Sigma}=C_{\alpha}^{2}\int_{0}^{\infty}e^{-s\left(C_{\alpha}\Delta\bar{g}(\theta^{\ast})-I\right)}\bar{h}(\theta^{\ast})e^{-s\left(C_{\alpha}\left(\Delta\bar{g}\right)^{\top}(\theta^{\ast})-I\right)}ds.
\end{equation*}

The covariance depends upon the eigenvalues and eigenvectors of the matrix $\Delta \bar g (\theta^{\ast} )$, which is the Hessian matrix $\Delta \bar g (\theta )$ at the global minimum $\theta^{\ast}$.  The larger the eigenvalues, the smaller the variance.  This means that the steeper the function $\bar g(\theta)$ is near the global minimum $\theta^{\ast}$, the smaller the asymptotic variance.  The flatter the function $\bar g (\theta)$ is near the global minimum, the larger the asymptotic variance.  If the function is very flat, $\theta_t$'s drift towards $\theta^{\ast}$ is dominated by the fluctuations from the noise $W_t$.  The covariance also depends upon the learning rate magnitude $C_{\alpha}$.  The larger the learning rate magnitude, the larger the asymptotic variance $\bar \Sigma$.  Although a sufficiently large learning rate is required to achieve the optimal rate of convergence $t^{-\frac{1}{2}}$, too large of a learning rate will cause high variance.

%


\section{Conclusion} \label{Conclusion}
Stochastic gradient descent in continuous time (SGDCT) provides a computationally efficient method for the statistical learning of continuous-time models, which are widely used in science, engineering, and finance.  The SGDCT algorithm follows a (noisy) descent direction along a continuous stream of data.  The algorithm updates satisfy a stochastic differential equation.  This paper analyzes the asymptotic convergence rate of the SGDCT algorithm by proving a central limit theorem.  An L$^p$ convergence rate is also proven for the algorithm.  

In addition to a theoretical guarantee, the convergence rate analysis provides important insights into the behavior and dynamics of the algorithm.  The asymptotic covariance is precisely characterized and shows the effects of different features such as the learning rate, the level of noise, and the shape of the objective function.


The proofs in this paper require addressing several challenges.  First, fluctuations of the form $\int_{0}^{t}\alpha_s \big{(} h(X_s, \theta_s ) - \bar h(\theta_s ) \big{)}ds$ must be analyzed.  We evaluate, and control with rate $\alpha_t^2$, these fluctuations using a Poisson partial differential equation.  Secondly, the model $f(x, \theta)$ is allowed to grow with $\theta$.  This means that the fluctuations as well as other terms can grow with $\theta$.  Therefore, we must prove an a priori stability estimate for $\norm{ \theta_t } $.  Proving a central limit theorem for the non-convex $\bar g(\theta)$ in Theorem \ref{T:MainTheorem3} is challenging since the convergence speed of $\theta_t$ can become arbitrarily slow in certain regions, and the gradient can even point away from the global minimum $\theta^{\ast}$.  We prove the central limit theorem for the non-convex case by analyzing two regimes $[0, \tau_{\delta} ]$ and $[\tau_{\delta}, \infty)$, where $\tau_{\delta}$ is defined such that $\norm{ \theta_t - \theta^{\ast} } < \delta $ for all $t \geq \tau_{\delta}$.  


\appendix
\section{Preliminary Estimates}  \label{Preliminary}
This section presents two key bounds that are used throughout the  paper.  Section \ref{StabilityBound} proves uniform in time moment bounds for  $\theta_t$.  That is, we prove that $\mathbb{E}[ \norm{ \theta_t }^p ]$ is bounded uniformly in time.  Section \ref{PoissonPDEbound} presents a bound on the solutions for a class of Poisson partial differential equations. In the paper, we relate certain equations to the solution of a Poisson partial differential equation and then apply this bound.

\subsection{Moment bounds} \label{StabilityBound}

It is easy to see that for $\norm{\theta}>R$ (in fact for $\norm{\theta}>0$) one has for $\norm{\theta_{t}}=\sqrt{\sum_{i=1}^{k}\left|\theta_{t}^{i}\right|^{2}}$
\begin{align*}
d\norm{\theta_{t}}&=\left[-\alpha_{t}\frac{\left<\theta_{t},\nabla_{\theta}g(X_{t},\theta_{t})\right>}{\norm{\theta_{t}}}-
\alpha_{t}^{2}\sum_{i,j=1}^{n}\frac{\theta_{t}^{i}\theta_{t}^{j}\left(\nabla_{\theta}f\nabla^{T}_{\theta}f(X_{t},\theta_{t})\right)_{i,j}}{2\norm{\theta_{t}}^{3}}
+\alpha_{t}^{2}\sum_{i=1}^{n}\frac{\left(\nabla_{\theta}f\nabla^{T}_{\theta}f(X_{t},\theta_{t})\right)_{i,i}}{2\norm{\theta_{t}}}\right]dt\nonumber\\
&\quad+\alpha_{t}\frac{1}{\norm{\theta_{t}}}\left<\theta_{t},\nabla_{\theta}f(X_{t},\theta_{t})dW_{t}\right>\nonumber\\
&=\left[-\alpha_{t}\frac{\left<\theta_{t},\nabla_{\theta}g(X_{t},\theta_{t})\right>}{\norm{\theta_{t}}}-
\alpha_{t}^{2}\sum_{i,j=1}^{n}\frac{\theta_{t}^{i}\theta_{t}^{j}\left(\nabla_{\theta}f\nabla^{T}_{\theta}f(X_{t},\theta_{t})\right)_{i,j}}{2\norm{\theta_{t}}^{3}}
+\alpha_{t}^{2}\sum_{i=1}^{n}\frac{\left(\nabla_{\theta}f\nabla^{T}_{\theta}f(X_{t},\theta_{t})\right)_{i,i}}{2\norm{\theta_{t}}}\right]dt\nonumber\\
&\quad+\alpha_{t}\tau (X_{t},\theta_{t})d\tilde{W}_{t},
\end{align*}
for an independent one-dimensional Brownian motion $\tilde{W}$. Let us also now consider the process $\tilde{\theta}_{t}$ which satisfies
\begin{align*}
d\tilde{\theta}_{t}&=-\alpha_{t}\kappa(X_{t})\tilde{\theta}_{t}dt+\alpha_{t}\nabla_{\theta}f(X_{t},\tilde{\theta}_{t})dW_{t},
\end{align*}
where $\kappa(x)$ is from Condition \ref{A:RecurrenceCondition0}. Then, we get similarly that for $\norm{\tilde{\theta}}>0$ and $\norm{\tilde\theta_{t}}=\sqrt{\sum_{i=1}^{k}\left|\tilde\theta_{t}^{i}\right|^{2}}$
\begin{align*}
d\norm{\tilde{\theta}_{t}}&=\left[-\alpha_{t}\kappa(X_{t})\norm{\tilde{\theta}_{t}}-
\alpha_{t}^{2}\sum_{i,j=1}^{n}\frac{\tilde{\theta}_{t}^{i}\tilde{\theta}_{t}^{j}\left(\nabla_{\tilde{\theta}}f\nabla^{T}_{\tilde{\theta}}f(X_{t},\tilde{\theta}_{t})\right)_{i,j}}{2\norm{\tilde{\theta}_{t}}^{3}}
+\alpha_{t}^{2}\sum_{i=1}^{n}\frac{\left(\nabla_{\tilde{\theta}}f\nabla^{T}_{\tilde{\theta}}f(X_{t},\tilde{\theta}_{t})\right)_{i,i}}{2\norm{\tilde{\theta}_{t}}}\right]dt\nonumber\\
&\quad+\alpha_{t}\frac{1}{\norm{\tilde{\theta}_{t}}}\left<\tilde{\theta}_{t},\nabla_{\tilde{\theta}}f(X_{t},\tilde{\theta}_{t})dW_{t}\right>\nonumber\\
&=\left[-\alpha_{t}\kappa(X_{t})\norm{\tilde{\theta}_{t}}-
\alpha_{t}^{2}\sum_{i,j=1}^{n}\frac{\tilde{\theta}_{t}^{i}\tilde{\theta}_{t}^{j}\left(\nabla_{\tilde{\theta}}f\nabla^{T}_{\tilde{\theta}}f(X_{t},\tilde{\theta}_{t})\right)_{i,j}}{2\norm{\tilde{\theta}_{t}}^{3}}
+\alpha_{t}^{2}\sum_{i=1}^{n}\frac{\left(\nabla_{\tilde{\theta}}f\nabla^{T}_{\tilde{\theta}}f(X_{t},\tilde{\theta}_{t})\right)_{i,i}}{2\norm{\tilde{\theta}_{t}}}\right]dt\nonumber\\
&\quad+\alpha_{t}\tau(X_{t},\tilde{\theta}_{t})d\tilde{W}_{t}
\end{align*}

Due to Conditions  \ref{A:RecurrenceCondition0} and \ref{A:GrowthConditions0} and continuity of the involved drift and diffusion coefficients for $\norm{\theta},\norm{\tilde{\theta}}> R >0 $, we may use the comparison theorem (see for example \cite{IkedaWatanabe1977}) to obtain that
\begin{align}
\mathbb{P}\left(\norm{\theta_{t}}\leq \norm{\tilde{\theta}_{t}}, t\geq 0\right)=1.\label{Eq:AlmostSureDomination}
\end{align}

It is easy to see that the proof of the comparison theorem 1.1 of \cite{IkedaWatanabe1977} goes through almost verbatim, despite the presence of the term $\lambda(x)$ in Condition \ref{A:GrowthConditions0}. The reason is that $|\lambda(x)|$ is assumed to have at most polynomial growth in $\norm{x}$ and all moments of $X_{t}$ are bounded uniformly in $t$.

Now, notice  that $\tilde{\theta}_{t}$ can be written as the solution to the integral equation
\begin{align*}
\tilde{\theta}_{t}&=\tilde{\theta}_{0}e^{-\int_{0}^{t}\alpha_{s}\kappa(X_{s})ds}+\int_{0}^{t}\alpha_{s}e^{-\int_{s}^{t}\alpha_{r}\kappa(X_{r})dr}\nabla_{\theta}f(X_{s},\tilde{\theta}_{s})dW_{s}.
\end{align*}

From the latter representation we obtain, recall that $\kappa(x)$ is almost surely positive, that for any $p\geq 1$
\begin{align*}
\mathbb{E}\norm{\tilde{\theta}_{t}}^{2p}&\leq K \mathbb{E}\norm{\tilde{\theta}_{0}}^{2p}+ K \mathbb{E} \norm{\int_{0}^{t}\alpha_{s}e^{-\int_{s}^{t}\alpha_{r}\kappa(X_{r})dr}\nabla_{\theta}f(X_{s},\tilde{\theta}_{s})dW_{s} }^{2p}\nonumber\\
&\leq K \mathbb{E}\norm{\tilde{\theta}_{0}}^{2p}+ K \mathbb{E}\left(\int_{0}^{t}\alpha^{2}_{s}e^{-2\int_{s}^{t}\alpha_{r}\kappa(X_{r})dr}\norm{\nabla_{\theta}f(X_{s},\tilde{\theta}_{s})}^{2}ds\right)^{p}\nonumber\\
&\leq K \mathbb{E}\norm{\tilde{\theta}_{0}}^{2p}+K \mathbb{E}\int_{0}^{t}\alpha^{2p}_{s}e^{-2p\int_{s}^{t}\alpha_{r}\kappa(X_{r})dr}\norm{\nabla_{\theta}f(X_{s},\tilde{\theta}_{s})}^{2p}ds\nonumber\\
&\leq K \mathbb{E}\norm{\tilde{\theta}_{0}}^{2p}+ K \mathbb{E}\int_{0}^{t}\alpha^{2p}_{s}\norm{\nabla_{\theta}f(X_{s},\tilde{\theta}_{s})}^{2p}ds\nonumber\\
&\leq K \mathbb{E}\norm{\tilde{\theta}_{0}}^{2p}+K \int_{0}^{t}\alpha^{2p}_{s}\left(1+\mathbb{E}\norm{X_{s}}^{2pq}+\mathbb{E}\norm{\tilde{\theta}_{s}}^{2p}\right)ds\nonumber\\
&\leq K \mathbb{E}\norm{\tilde{\theta}_{0}}^{2p}+K \int_{0}^{t}\alpha^{2p}_{s}ds+ K \int_{0}^{t}\alpha^{2p}_{s}\mathbb{E}\norm{\tilde{\theta}_{s}}^{2p}ds\nonumber\\
&\leq K \mathbb{E}\norm{\tilde{\theta}_{0}}^{2p}+K\frac{1}{2p-1}\left(C_{0}^{1-2p}-(C_{0}+t)^{1-2p}\right) +K \int_{0}^{t}\alpha^{2p}_{s}\mathbb{E}\norm{\tilde\theta_{s}}^{2p}ds.\nonumber\\
&\leq K+ K \int_{0}^{t}\alpha^{2p}_{s}\mathbb{E}\norm{\tilde\theta_{s}}^{2p}ds,
\end{align*}
where the unimportant finite constant $K<\infty$ changes from line to line.  Hence, Gronwall lemma then immediately gives that for any $p\geq 1$ there exists a finite constant $K<\infty$ such that
\begin{align}
\sup_{t>0}\mathbb{E}\norm{\tilde{\theta}_{t}}^{2p}&\leq K.\label{Eq:OU_BoundedMoments}
\end{align}

Combining (\ref{Eq:AlmostSureDomination}) and (\ref{Eq:OU_BoundedMoments}) we then obtain that for any $p\geq1$ and for an appropriate finite constant $K$,
\begin{align*}
\sup_{t>0}\mathbb{E}\norm{\theta_{t}}^{2p}&\leq K,
\end{align*}
completing the proof of the targeted bound.

\subsection{Poisson PDE} \label{PoissonPDEbound}
We recall the following regularity result from \cite{pardoux2003poisson} on the Poisson equations in the whole space, appropriately stated to cover our case of interest.
\begin{theorem} \label{T:RegularityPoisson}
Let Conditions \ref{A:LyapunovCondition}, \ref{A:Assumption0} and \ref{A:Assumption1} be satisfied. Assume that $H(x,\theta)\in \mathcal{C}^{\alpha,2}\left(\mathcal{X},\mathbb{R}^{k}\right)$ \textcolor{black}{where $\mathcal{X} \subseteq \mathbb{R}^m$},
\begin{equation}
\int_{\mathcal{X}} H(x,\theta)\pi(dx)=0,\label{Eq:CenteringCondition}
\end{equation}
and that for some positive constants $K$, $p_{1},p_{2},p_{3}$ and $q$,
\begin{eqnarray}
\norm{ H(x, \theta) } &\leq& K (1 + \norm{\theta}^{p_{1}} ) ( 1 + \norm{x}^q ), \notag \\
 \norm{ \frac{\partial H}{\partial \theta} (x, \theta)  } &\leq& K (1 + \norm{\theta}^{p_{2}} ) (1+\norm{x}^{q} ), \notag \\
 \norm{ \frac{\partial^2 H}{\partial \theta^2} (x, \theta)  } &\leq& K (1 + \norm{\theta}^{p_{3}} ) (1+\norm{x}^{q} ).
\label{Hbounds0}
\end{eqnarray}
Let $\mathcal{L}_{x}$ be the infinitesimal generator for the $X$ process. Then the Poisson equation
\begin{eqnarray}
& &\mathcal{L}_{x}u(x,\theta)= H(x,\theta),\quad\int_{\mathcal{X}}%
u(x,\theta)\pi(dx)=0 \label{Eq:CellProblem}
\end{eqnarray}
has a unique solution that satisfies $u(x,\cdot)\in \mathcal{C}^{2}$ for every $x\in\mathcal{X}$, $\partial_{\theta}^{2}u\in \mathcal{C}\left(\mathcal{X}\times\mathbb{R}^{n}\right)$ and there exist positive constants $K'$ and $m$ such that
\begin{eqnarray}
&&  \norm{u (x, \theta)} + \norm{ \nabla_x u (x, \theta) }  \leq  K' ( 1+  \norm{ \theta }^{p_{1}} ) (1 + \norm{x}^m) ,  \notag \\
&& \norm{ \frac{\partial u}{\partial \theta }(x, \theta) } + \norm{ \frac{\partial^2 u}{\partial x \partial \theta}(x, \theta) }     \leq  K' (1 + \norm{\theta}^{p_{2}}) ( 1 + \norm{x}^m ), \notag \\
&&  \norm{ \frac{\partial^2 u}{\partial \theta^2 }(x, \theta) }  + \norm{ \frac{\partial^3 u}{\partial x \partial \theta^2}(x, \theta) }  \leq  K' (1 + \norm{\theta}^{p_{3}} ) ( 1 + \norm{x}^m ).
\label{RegularityBounds1}
\end{eqnarray}
\end{theorem}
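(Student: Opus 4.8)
The plan is to reduce the claim to the classical theory of Poisson equations for ergodic diffusions (as developed in \cite{PardouxVeretennikov1,pardoux2003poisson}), exploiting the fact that $\mathcal{L}_x$ acts only on the $x$ variable, so that $\theta$ enters purely as a parameter.

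First I would fix $\theta$ and regard $\mathcal{L}_x u(\cdot,\theta)=H(\cdot,\theta)$ as a Poisson equation in $x$ alone. Condition \ref{A:LyapunovCondition} (constant non-degenerate $\sigma\sigma^{\top}$ and $\lim_{|x|\rightarrow\infty}f^{\ast}(x)\cdot x=-\infty$) guarantees that $X_t$ is exponentially ergodic with invariant measure $\pi$, that there is a Lyapunov function controlling $\mathbb{E}_x\norm{X_t}^{p}$ by a polynomial in $\norm{x}$, and that all $\pi$-moments are finite. Since $\int_{\mathcal{X}}H(x,\theta)\pi(dx)=0$ by (\ref{Eq:CenteringCondition}) and $H(\cdot,\theta)\in\mathcal{C}^{\alpha}$ with growth $K(1+\norm{\theta}^{p_{1}})(1+\norm{x}^{q})$, the Pardoux--Veretennikov existence and uniqueness result applies and furnishes the probabilistic representation
\[
u(x,\theta)=\int_{0}^{\infty}\mathbb{E}_x\big[H(X_t,\theta)\big]\,dt,
\]
the integral converging because the ergodic estimate $\big|\mathbb{E}_x[H(X_t,\theta)]\big|\leq C(1+\norm{\theta}^{p_{1}})(1+\norm{x}^{q})e^{-ct}$ holds for some $c>0$, uniformly for $\theta$ in compact sets. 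Integrating in $t$ gives the bound on $\norm{u(x,\theta)}$ in (\ref{RegularityBounds1}) with $x$-exponent $m$ determined by $q$ and the Lyapunov rate, and $\theta$-exponent $p_{1}$. Interior Schauder estimates for the uniformly elliptic operator $\mathcal{L}_x$ (legitimate since $\sigma\sigma^{\top}$ is constant and non-degenerate and $f^{\ast}\in\mathcal{C}^{2+\alpha}$), applied on balls of radius one and combined with this a priori bound, then give $u(\cdot,\theta)\in\mathcal{C}^{2+\alpha}_{\mathrm{loc}}$ and the stated bound on $\nabla_x u$, the polynomial $x$-growth being preserved by the scale structure of the estimate.

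The dependence on $\theta$ is comparatively soft. Differentiating the equation formally in $\theta$ and using that $\mathcal{L}_x$ is $\theta$-free gives $\mathcal{L}_x\,\partial_{\theta}u(x,\theta)=\partial_{\theta}H(x,\theta)$, and the centering condition persists, since $\int_{\mathcal{X}}\partial_{\theta}H(x,\theta)\pi(dx)=\partial_{\theta}\int_{\mathcal{X}}H(x,\theta)\pi(dx)=0$. Applying the previous step to $\partial_{\theta}H$, which grows like $K(1+\norm{\theta}^{p_{2}})(1+\norm{x}^{q})$, produces a solution carrying the exponent $p_{2}$; that this solution equals $\partial_{\theta}u$ is checked by differentiating the representation formula under the integral sign, which is justified by dominated convergence using the uniform exponential decay together with the growth bound on $\partial_{\theta}H$. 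Iterating once more handles $\partial^{2}_{\theta}u$ with exponent $p_{3}$, and the mixed derivatives $\partial^{2}u/\partial x\partial\theta$ and $\partial^{3}u/\partial x\partial\theta^{2}$ follow by re-running the $x$-Schauder and gradient estimates on the Poisson equations solved by $\partial_{\theta}u$ and $\partial^{2}_{\theta}u$. Uniqueness within the class of polynomially growing solutions is handled separately: if $w$ is $\mathcal{L}_x$-harmonic with $\int_{\mathcal{X}}w\,\pi(dx)=0$, then $w(X_t)-w(x)=\int_{0}^{t}\nabla_x w(X_s)\sigma\,dW_s$ by It\^{o}'s formula, and the moment bounds on $X_t$ together with the ergodic theorem force $w\equiv 0$.

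The main obstacle is the bookkeeping of simultaneous polynomial growth in $x$ and in $\theta$: the classical Poisson-equation results are usually stated for right-hand sides bounded (or with a prescribed growth) in $x$, so the real content is verifying that the ergodic bound $\big|\mathbb{E}_x[H(X_t,\theta)]\big|\leq C(1+\norm{\theta}^{p_{1}})(1+\norm{x}^{q})e^{-ct}$ holds with exactly these exponents and uniformly enough in $\theta$ to permit differentiation under the integral; the Lyapunov condition in Condition \ref{A:LyapunovCondition} is precisely what supplies this. Once that estimate is in place, the $\theta$-direction requires no new ideas beyond iterating the argument, because $\theta$ is merely a parameter for $\mathcal{L}_x$.
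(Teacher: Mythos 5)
The paper does not prove this theorem at all: it is explicitly ``recalled'' from \cite{pardoux2003poisson} (Pardoux--Veretennikov, \emph{On Poisson equation and diffusion approximation 2}), restated with the polynomial growth in $\theta$ made explicit. So there is no in-paper proof to compare against; what you have written is a reconstruction of the argument of the cited reference, and it follows that reference's strategy faithfully — probabilistic representation of the solution, ergodic decay of $\mathbb{E}_x[H(X_t,\theta)]$ to make the time integral converge, interior Schauder estimates for the $x$-regularity, and differentiation of the whole construction in the parameter $\theta$ using that $\mathcal{L}_x$ is $\theta$-free and that the centering condition is preserved under $\partial_\theta$. That is exactly the right division of labor, and your observation that the only genuinely new bookkeeping is the simultaneous polynomial growth in $x$ and $\theta$ is also the correct diagnosis of why the authors restate the theorem rather than cite it verbatim.

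Two small corrections. First, the representation formula should read $u(x,\theta)=-\int_{0}^{\infty}\mathbb{E}_x[H(X_t,\theta)]\,dt$: Dynkin's formula gives $\mathbb{E}_x[u(X_t,\theta)]-u(x,\theta)=\int_0^t\mathbb{E}_x[H(X_s,\theta)]\,ds$, and letting $t\to\infty$ with $\int u\,d\pi=0$ produces the minus sign. This is cosmetic. Second, and more substantively, Condition \ref{A:LyapunovCondition} only assumes $\lim_{|x|\to\infty}f^{\ast}(x)\cdot x=-\infty$, which in the Veretennikov--Pardoux framework yields \emph{polynomial} mixing bounds of arbitrarily high order (cf. \cite{Veretennikov1997}), not the exponential decay $e^{-ct}$ you assert; exponential ergodicity typically requires a stronger drift condition such as $f^{\ast}(x)\cdot x\leq -c\norm{x}^{2}$ outside a compact set. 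This does not break the proof — polynomial decay of sufficiently high order still makes $\int_0^\infty\mathbb{E}_x[H(X_t,\theta)]\,dt$ converge and still delivers the polynomial bounds (\ref{RegularityBounds1}) — but the decay estimate should be stated in the weaker form $\big|\mathbb{E}_x[H(X_t,\theta)]\big|\leq C(1+\norm{\theta}^{p_1})(1+\norm{x}^{m})(1+t)^{-k}$ for suitable $k$ and $m$, which is what \cite{PardouxVeretennikov1,pardoux2003poisson} actually provide under this hypothesis. Relatedly, ``uniformly for $\theta$ in compact sets'' is not quite enough for the statement as written; you need the constants to depend on $\theta$ only through the explicit factors $(1+\norm{\theta}^{p_i})$, which they do because $\theta$ enters solely through the right-hand sides $H,\partial_\theta H,\partial^2_\theta H$ and their stated bounds.
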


Terms of the form $H(x, \theta) = h(x, \theta) - \bar h( \theta)$ must be analyzed and controlled throughout the paper.  Note that $H(x, \theta)$ satisfies the centering condition (\ref{Eq:CenteringCondition}).  For example, the term $G(x, \theta) = \nabla_{\theta} g (x, \theta) - \nabla_{\theta} \bar g ( \theta)$ needs to be controlled. $G(x, \theta)$ satisfies the centering condition (\ref{Eq:CenteringCondition}) and Condition \ref{A:Assumption1} implies the bounds (\ref{Hbounds0}) with appropriate choices for $p_{1},p_{2},p_{3}$ and $q$.  Therefore, the Poisson solution (\ref{Eq:CellProblem}) associated with $H(x, \theta) = G(x, \theta)$ satisfies the bounds (\ref{RegularityBounds1}).

\section{Proof of Convergence with Linear Growth in $f(x, \theta)$ }  \label{ConvergenceProofQuadraticGrowth}
Under the global Lipschitz assumption on $\nabla\bar{g}(\theta)$ and the uniform bounds on the moments of $\theta$ derived in Appendix \ref{StabilityBound} the proof is exactly the same as in \cite{SGDCT1}, except for the term $J_t^{(1)}$ of Lemma 3.1, in Section 3 of \cite{SGDCT1} which we re-define below and prove converges almost surely to $0$.
\[
J_{t}^{(1)}=\alpha_{t}    \left\|v(X_{t},\theta_{t})\right\|.
\]

Using the bounds (\ref{RegularityBounds1}) and results from \cite{PardouxVeretennikov1}, there is some $0<K<\infty$ (that may change from line to line below) and $0<q<\infty$ such that for $t$ large enough
\begin{align*}
\mathbb{E}|J_{t}^{(1)}|^{2}&\leq K \alpha_{t}^{2}\mathbb{E}\left[1+ \|X_{t}\|^{q} + \| \theta_t \|^2 \right] \leq K \alpha_t^2.
\end{align*}

Consider $p>0$ such that $\lim_{t\rightarrow\infty}\alpha_{t}^{2}t^{2p}=0$ and for any $\delta\in(0,p)$ define the event $A_{t,\delta}=\left\{J_{t}^{(1)}\geq t^{\delta-p}\right\}$. Then we have for $t$ large enough such that $\alpha_{t}^{2}t^{2p}\leq 1$
\[
\mathbb{P}\left(A_{t,\delta}\right)\leq \frac{\mathbb{E}|J_{t}^{(1)}|^{2}}{t^{2(\delta-p)}}\leq K\frac{\alpha_{t}^{2}t^{2p}}{t^{2\delta}}\leq K\frac{1}{t^{2\delta}}.
\]

The latter implies that
\[
\sum_{n\in\mathbb{N}}\mathbb{P}\left(A_{2^{n},\delta}\right)<\infty.
\]

Therefore, by Borel-Cantelli lemma we have that for every $\delta\in(0,p)$ there is a finite positive random variable $d(\omega)$ and some $n_{0}<\infty$ such that for every $n\geq n_{0}$ one has
\[
J_{2^{n}}^{(1)}\leq\frac{d(\omega)}{2^{n(p-\delta)}}.
\]

Thus for $t\in[2^{n},2^{n+1})$ and $n\geq n_{0}$ one has for some finite constant $K<\infty$
\[
J_{t}^{(1)} \leq K \alpha_{2^{n+1}} \sup_{s\in(0,2^{n+1}]}\|v(X_{s},\theta_{s})\| \leq K \frac{d(\omega)}{2^{(n+1)(p-\delta)}}\leq K \frac{d(\omega)}{t^{p-\delta}}.
\]

The latter display then guarantees that for $t\geq 2^{n_{0}}$ we have with probability one
\begin{align*}
J_{t}^{(1)}\leq K \frac{d(\omega)}{t^{p-\delta}}\rightarrow 0, \text{ as }t\rightarrow\infty.
\end{align*}

\end{document}